\newcounter{relctr} 
\newcommand\labelrel[2]{%
  \begingroup
    \refstepcounter{relctr}%
    \stackrel{\textnormal{(\alph{relctr})}}{\mathstrut{#1}}%
    \originallabel{#2}%
  \endgroup
}
\theoremstyle{plain}
\newtheorem{theorem}{Theorem}[section]
\newtheorem{cor}[theorem]{Corollary}
\newtheorem{lemma}[theorem]{Lemma}
\newtheorem{question}[theorem]{Question}
\newtheorem*{thm:main}{Theorem~\ref{thm:main}}
\newtheorem*{thm:GammaSig}{Theorem~\ref{thm:GammaSig}}
\newtheorem*{thm:chi-orb s-m}{Theorem~\ref{chi-orb s-m}}
\theoremstyle{definition}
\newtheorem{defn}[theorem]{Definition}
\newtheorem{remark}[theorem]{Remark}
\newtheorem{example}[theorem]{Example}
\newcommand{\comment}[1]{}
\newcommand{\bdry}{\ensuremath{\partial}}
\newcommand{\boundary}{\bdry}
\newcommand{\nbhd}{\ensuremath{\mathcal{N}}}
\newcommand{\N}{\ensuremath{\mathbb{N}}}
\newcommand{\R}{\ensuremath{\mathbb{R}}}
\newcommand{\Z}{\ensuremath{\mathbb{Z}}}
\newcommand{\RP}{\ensuremath{\mathbb{RP}}}
\newcommand{\sv}{\ensuremath{\mathrm{sv}}}
\newcommand{\cut}{\ensuremath{\backslash}}
\newcommand{\thetan}{\theta}
\newcommand{\Klein}{\scalebox{.8}{\rotatebox[origin=c]{180}{\ensuremath{\YGraph}}}}
\newcommand{\mir}[1]{m #1}
\definecolor{amaranth}{rgb}{0.9, 0.17, 0.31} 
\definecolor{carrotorange}{rgb}{0.93, 0.57, 0.13} 
\definecolor{citrine}{rgb}{0.89, 0.82, 0.04} 
\definecolor{dartmouthgreen}{rgb}{0.05, 0.5, 0.06} 
\definecolor{ballblue}{rgb}{0.13, 0.67, 0.8} 
\definecolor{ceruleanblue}{rgb}{0.16, 0.32, 0.75} 
\definecolor{amethyst}{rgb}{0.6, 0.4, 0.8} 
\definecolor{amber}{rgb}{1.0, 0.75, 0.0} 
\definecolor{burlywood}{rgb}{0.87, 0.72, 0.53} 
\title{Signature, slicing foams, and crossing changes of Klein graphs} 
\author{Kenneth L. Baker}
\address{Department of Mathematics, 
University of Miami}   
\email{k.baker@math.miami.edu}     
\author{Allison H. Moore}
\address{Department of Mathematics \& Applied Mathematics,  Virginia Commonwealth University}   
\email{moorea14@vcu.edu}   
\author{Danielle O'Donnol}
\address{School of Science, Mathematics, and Engineering,
Marymount University}   
\email{dodonnol@marymount.edu} 
\author{Scott Taylor}
\address{Department of Mathematics, 
Colby College}   
\email{scott.taylor@colby.edu}
\keywords{spatial graphs, theta-curves, unknotting, signature, orbifold, slice genus}
\subjclass{57K10, 57K12 (primary)}
\date{\today}
\begin{document}

\begin{abstract}
    A totally oriented Klein graph is a trivalent spatial graph in the 3-sphere with a 3-coloring of its edges and an orientation on each bicolored link. A totally oriented Klein foam is a 3-colored 2-complex in the 4-ball whose boundary is a Klein foam and whose bicolored surfaces are oriented. We extend Gille-Robert's signature for 3-Hamiltonian Klein graphs to all totally oriented Klein graphs and develop an analogy of Murasugi's bounds relating the signature, slice genus and unknotting number of knots. In particular, we show that the signature of a totally oriented Klein graph produces a lower bound on the negative orbifold Euler characteristic of certain totally oriented Klein foams bounded by $\Gamma$. When $\Gamma$ is abstractly planar, these negative Euler characteristics, in turn, produce a lower bound on a certain natural unknotting number for $\Gamma$. \emph{Mutatis mutandi}, we produce lower bounds on the corresponding Gordian distance between two totally oriented Klein graphs that can be related by a sequence of crossing changes. We also give examples of $\thetan$-curves for which our lower bounds on unknotting number improve on previously known bounds.  
\end{abstract}

\maketitle

\section{Introduction}

For a knot $K \subset S^3$, the familiar chain of inequalities
\[ \frac12 |\sigma(K)| \leq g_4(K) \leq u(K)\]
between the signature 
$\sigma(K)$, slice genus $g_4(K)$, and unknotting number $u(K)$ \cite[Theorem 9.1 and Theorem 10.1]{Murasugi} can be rewritten as
\[
|\sigma(K) | \leq 1 - \chi_4(K) \leq 2u(K).
\]
Here  $\chi_4(K)$ is the \emph{slice Euler characteristic}, the maximal Euler characteristic of a properly embedded, orientable smooth surface $F \subset B^4$ without sphere components bounded by $K$.
Considering the knot $K$ as the branch locus for a branched double cover of $S^3$, and the surface $F$ as a branch locus for a branched double cover of $B^4$, we can consider the orbifold Euler characteristic $\chi^{orb}(F) =\dfrac12 \chi(F)$ so that $\chi^{orb}_4(K)$ is the maximum of $\chi^{orb}(F)$ among such surfaces $F$. The division by $2$ in the orbifold Euler characteristic is due to the degree of the cover.  Now our initial inequalities takes the form: 
\begin{equation}
\label{eqn:sig-chi-unknot_for_knots}
\frac{1}{2}|\sigma(K) | \leq \frac{1}{2} - \chi^{orb}_4(K) \leq u(K).    
\end{equation}

The \emph{Gordian distance} $d(K_1, K_2)$ between two knots is the minimal number of crossing changes needed to convert $K_1$ into $K_2$. Essentially the same proof as for Inequality \eqref{eqn:sig-chi-unknot_for_knots} shows that
\begin{equation}
\label{eqn:sig-chi-gorddist_for_knots}
\frac{1}{2}|\sigma(K_1) - \sigma(K_2)| \leq \frac{1}{2} - \chi^{orb}_4(K_1 \# -mK_2) \leq d(K_1, K_2). 
\end{equation}
Indeed, $u(K)$ is just the Gordian distance of $K$ with the unknot.

Both inequalities (\ref{eqn:sig-chi-unknot_for_knots}) and (\ref{eqn:sig-chi-gorddist_for_knots}) extend to the setting of totally oriented Klein graphs, which are trivalent spatial graphs with extra coloring and orientation information.  The simplest Klein graph is the $\thetan$-curve with edge colorings.  In a sequence of crossing changes of such a graph it is sensible to keep track of whether a crossing change occurs between distinct edges, a mixed-color crossing change, or between an edge and itself, a same-color crossing change. For a sequence of crossing changes, we let $m$ denote the number of mixed-color crossing changes and $s$ the number of same-color crossing changes. The \emph{Klein unknotting number} $u_{\Klein}(\thetan)$ is the minimum of $m/2 + s$ over all crossing change sequences converting $\thetan$ to the unknot. We prove:

\begin{cor}\label{cor:sigsliceunknot}
If $\thetan$ is a theta curve in $S^3$, then
\[
\frac14 |\sigma(\thetan)| \leq \frac14 - \chi^{orb}_4(\thetan) \leq u_{\Klein}(\thetan).
\]
\end{cor}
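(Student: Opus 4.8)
The plan is to deduce Corollary~\ref{cor:sigsliceunknot} as a special case of the main theorems of the paper, applied to the totally oriented Klein graph built from the $\thetan$-curve. The first step is to explain how a theta curve $\thetan$ with its three edges gives rise to a totally oriented Klein graph $\Gamma$: color the three edges with the three colors, and observe that each pair of colors determines a bicolored cycle (a knot in $S^3$), which we orient consistently. (There is a standard choice here coming from the fact that the three bicolored cycles share edges, so orienting one edge forces orientations compatibly; one should check the orientation conventions match those used to define the extended Gille-Robert signature.) Under this dictionary, the Klein unknotting number $u_{\Klein}(\thetan)$ defined via $m/2 + s$ over crossing-change sequences is exactly the natural unknotting number for $\Gamma$ appearing in the paper's abstract, with mixed-color changes contributing $1/2$ and same-color changes contributing $1$.

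Next I would invoke the chain of inequalities established earlier in the paper. The right-hand inequality $\tfrac14 - \chi^{orb}_4(\thetan) \leq u_{\Klein}(\thetan)$ should follow from the general statement that the negative orbifold Euler characteristic of the optimal totally oriented Klein foam bounded by $\Gamma$ is bounded above by (a normalization of) the natural unknotting number, valid because $\thetan$, being a theta curve, is abstractly planar --- this is precisely the hypothesis flagged in the abstract ("When $\Gamma$ is abstractly planar"). The left-hand inequality $\tfrac14|\sigma(\thetan)| \leq \tfrac14 - \chi^{orb}_4(\thetan)$ should follow from the paper's analogue of Murasugi's bound: the signature of a totally oriented Klein graph is a lower bound on the negative orbifold Euler characteristic of any totally oriented Klein foam it bounds. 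The factors of $\tfrac14$ (versus $\tfrac12$ in the knot case) presumably come from the degree of the relevant branched cover associated to a $3$-colored $2$-complex being $4$ rather than $2$; I would make sure the normalizations in the cited general theorems, once specialized to $\Gamma$, produce exactly $\tfrac14$.

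The main obstacle I anticipate is bookkeeping rather than conceptual: verifying that the abstract notion of "natural unknotting number" for a totally oriented Klein graph, when specialized to the theta curve, coincides exactly with $u_{\Klein}(\thetan) = m/2 + s$, including the claim that crossing changes of the spatial graph can be matched with crossing changes respecting the coloring and orientation data (a same-color change is a crossing change of an edge with itself, a mixed-color change involves two different colors, and one must check the orientations can always be maintained or that the cost accounting is insensitive to the choice). A secondary point to check is that the signature $\sigma(\thetan)$ as it would be classically understood for theta curves agrees with the extended Gille-Robert signature of the associated $\Gamma$ (so that the statement is genuinely about theta curves and not a reindexed quantity). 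Once these identifications are in place, the corollary is immediate from the two general inequalities.
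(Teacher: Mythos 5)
Your proposal is correct and follows essentially the same route as the paper: the corollary is obtained by specializing Theorem~\ref{thm:main} to the case where the second graph is the trivial theta curve, so that the vertex sum $\vv{\Gamma_1}\#_3-\mir{\vv{\Gamma_2}}$ is $\thetan$ itself and the invariants ($\beta=0$, $4\mu-12=0$, $5-2V=1$) reduce inequalities \eqref{ineq:a} and \eqref{ineq:c} to the stated bounds. The bookkeeping issues you flag are precisely the ones the paper verifies, in particular that $u_{\Klein}(\thetan)$ is insensitive to the choice of Klein coloring and total orientation.
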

The statement for theta curves is a corollary of Theorem \ref{thm:main}.

Recently, the quest to understand unknotting numbers and sequences of crossing changes for $\thetan$-curves has been provoked by potential biological applications \cite{BOD, BOS, BBMOT}. In Section \ref{sec:examples}, we give examples demonstrating that the bounds of Corollary \ref{cor:sigsliceunknot} improve previous bounds.

\subsection{Totally orientable Klein graphs}
More generally, 
we develop analogous bounds for the class of \emph{totally oriented Klein graphs}.

A {\em spatial graph} is a smooth embedding in $S^3$ of a compact graph, allowing $S^1$ components, considered up to smooth isotopy.  A trivalent spatial graph is a {\em web}.  Without the embedding into $S^3$, a web is also known as a \emph{cubic graph}.  A {\em Klein coloring} or {\em Tait coloring} of a web is an assignment of a color $r,g,b$ to each edge or loop so that edges of all three colors are incident to each vertex of the web.   A web equipped with a Klein coloring is a {\em Klein graph}. Klein colorings arise naturally when considering quotients of the Klein group acting on 3-manifolds.

Given a Klein graph $\Gamma$, each unordered pair $\{i,j\} \subset \{r,g,b\}$ defines the {\em bicolored link} $\Gamma_{ij}$ comprised of only the edges and loops colored $i$ or $j$. A Klein graph is \emph{3-Hamiltonian} if each bicolored link is actually a knot; signatures for 3-Hamiltonian Klein graphs were developed by Gille and Robert \cite{GR}. In the process of extending their work to relate signature, unknotting number, and $\chi_4^{orb}$, we found that it was possible to drop the 3-Hamiltonian condition. 

This enables an analogue to the classical extension of Inequality \eqref{eqn:sig-chi-unknot_for_knots} to links. Effectively we obtain that for two Klein graphs related by crossing changes, the difference in their signatures gives a lower bound on an Euler characteristic of a certain type of cobordism between them. This, in turn, gives a lower bound on the Klein Gordian distance between them, though with adjustments based on easily calculable invariants of the two graphs. 

\begin{thm:main}
Suppose that $\vv{\Gamma_1}$ and $\vv{\Gamma_2}$ are totally oriented Klein graphs that are related by a sequence of crossing changes. 
Set $V:=|V(\Gamma_1)|=|V(\Gamma_2)|$ and $\mu:= \mu(\Gamma_1)=\mu(\Gamma_2)$. Then:

\begin{align*}
 |\sigma(\vv{\Gamma_1})-\sigma(\vv{{\Gamma_2}})| - \beta(\vv{\Gamma_1})-\beta(\vv{{\Gamma_2}}) +4\mu  - 12 
 &\leq 
 5- 4 \chi^{orb}_4(\vv{\Gamma_1} \#_3 -\mir{\vv{\Gamma_2}}) -2V \\
 &\leq 
 - 4 \chi^{orb}_4(\vv{\Gamma_1}, \vv{\Gamma_2}; s) -2V\\
 &\leq 
 4d_{\Klein}(\vv{\Gamma_1}, \vv{\Gamma_2})    
\end{align*}
where $\vv{\Gamma_1} \#_3 -\mir{\vv{\Gamma_2}}$ is any vertex connected sum of $\vv{\Gamma_1}$ and $-\mir{\vv{\Gamma_2}}$ compatible with the total orientations.
\end{thm:main}

Here $\beta$ is the nullity, $\mu$ is the sum of the number of components of the bicolored links, and $d_{\Klein}$ is the Klein Gordian distance (akin to the Klein unknotting number). To obtain Theorem \ref{thm:main} we must impose the extra structure of a \emph{total orientation} on our Klein graphs, which is a choice of orientation on each bicolored link.  We denote a totally oriented Klein graph by $\vv{\Gamma}$.

The additional structure of a total orientation enables us to define in Section \ref{sec:sigs} for any totally oriented Klein graph $\vv{\Gamma}$ signatures $\zeta(\Gamma)$ and $\sigma(\vv{\Gamma})$. These are analogous to the Murasugi signature of an unoriented link and the signature of an oriented link, respectively, and extending \cite{GR}. 
The signature $\sigma(\vv{\Gamma})$ derives from a $\Z_2 \times \Z_2$  covering of $B^4$ branched over a properly embedded colored $2$-complex $F$ in $B^4$ called a {\em Klein foam} for which $\Gamma = \bdry F$ in  $S^3 = \bdry B^4$. (See Definition~\ref{defn:foam} for the specifics of the foam.) The terms $\chi^{orb}_4(\vv{\Gamma_1} \#_3 -\mir{\vv{\Gamma_2}})$ and  $\chi^{orb}_4(\vv{\Gamma_1}, \vv{\Gamma_2}; s)$ appearing in Theorem \ref{thm:main} both refer to \emph{slice orbifold characteristics} of two types of totally oriented Klein foams relating $\vv{\Gamma_1}$ and $\vv{\Gamma_2}$.  Here, $\chi^{orb}_4(\vv{\Gamma_1} \#_3 -\mir{\vv{\Gamma_2}})$ is a maximum taken over all of orientable slice foams bounding the vertex sum $\vv{\Gamma_1} \#_3 -\mir{\vv{\Gamma_2}}$, whereas $\chi^{orb}_4(\vv{\Gamma_1}, \vv{\Gamma_2}; s)$ calculates the \emph{seamed cobordism characteristic} of a restricted type of foam 
where the singular set is a 1-manifold of which each edge runs from $\vv{\Gamma_1}$ to $\vv{\Gamma_2}$
(see Definitions \ref{defn:foamycobordism} and \ref{defn:chi-orb}).

In general the singular set of a Klein foam could have interior vertices.  A total orientation on a Klein foam bestows an orientation on these vertices. A signed count of these vertices, the \emph{signed seam vertex count} $\operatorname{sv}$, can be determined from the totally oriented Klein graph that is the boundary of the foam.   This is an invariant of totally oriented Klein graphs, as we discuss in Section~\ref{sec:totalorientationsverticesandseamvertices}.

\begin{thm:GammaSig}
For a totally oriented Klein graph $\vv{\Gamma} \subset S^3$, we have 
\[ |\sigma(\vv{\Gamma})| \leq 3-|V(\Gamma)|+2|\sv(\vv{\Gamma})|-4\chi^{orb}_4(\vv{\Gamma}) -2(\mu(\Gamma)-3) + \beta(\vv{\Gamma}).\]
\end{thm:GammaSig}

In Section \ref{sec:cd}, we turn our attention to the construction of the totally oriented seamed foamy cobordisms that are required for the third inequality in Theorem \ref{thm:main}. As it will turn out, such objects naturally arise by stacking cobordisms resulting from crossing changes of totally oriented Klein graphs. 

\begin{thm:chi-orb s-m} 
 Suppose there is a sequence of $s$ same-colored crossing changes and $m$ mixed-colored crossing changes between totally oriented Klein graphs $\vv{\Gamma_1}$ and $\vv{\Gamma_2}$.   Then there is a totally orientable seamed foamy cobordism $\vv{F}$ between $\vv{\Gamma_1}$ and $\vv{\Gamma_2}$ without bicolored spheres with 
    \[ \chi^{orb}(\vv{F}) = -|V(\vv{\Gamma_1})|/2 - (s+m/2).\]
\end{thm:chi-orb s-m}

Consequently, the seamed cobordism characteristic provides a lower bound on Klein Gordian distance, providing the last inequality in Theorem \ref{thm:main}:
\begin{cor}
\label{chi-orb dist}
If the Klein Gordian distance between two Klein graphs $\Gamma_1$ and $\Gamma_2$ is $d_{\Klein}(\Gamma_1, \Gamma_2)$ then 
        \[ -\chi_4^{orb}(\Gamma_1, \Gamma_2;s) \leq   d_{\Klein}(\Gamma_1, \Gamma_2)+\frac12 |V(\Gamma_1)|. 
        \]
\end{cor}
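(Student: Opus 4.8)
The plan is to realize $d_{\Klein}(\Gamma_1,\Gamma_2)$ by an explicit crossing change sequence, feed that sequence into Theorem~\ref{chi-orb s-m}, and then read off the bound from the fact that the seamed cobordism characteristic is a maximum of $\chi^{orb}$. Concretely, I would first fix a crossing change sequence from $\Gamma_1$ to $\Gamma_2$ that realizes $d_{\Klein}(\Gamma_1,\Gamma_2)$; by definition of the Klein Gordian distance this sequence uses some number $m$ of mixed-colored and $s$ of same-colored crossing changes with $m/2+s=d_{\Klein}(\Gamma_1,\Gamma_2)$. Since a crossing change is supported in a ball disjoint from the vertices and does not alter the underlying abstract bicolored links, I would pick a total orientation on $\Gamma_1$ and transport it along the sequence, obtaining total orientations on every intermediate Klein graph and in particular on $\Gamma_2$. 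This displays the same sequence as one of $s$ same-colored and $m$ mixed-colored crossing changes between the totally oriented graphs $\vv{\Gamma_1}$ and $\vv{\Gamma_2}$, with $|V(\vv{\Gamma_1})|=|V(\Gamma_1)|$ because crossing changes preserve the vertex set.

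Now I would apply Theorem~\ref{chi-orb s-m} to this sequence: it produces a totally orientable seamed foamy cobordism $\vv{F}$ between $\vv{\Gamma_1}$ and $\vv{\Gamma_2}$, without bicolored spheres, with
\[
\chi^{orb}(\vv{F}) = -\tfrac12|V(\vv{\Gamma_1})| - (s+m/2) = -\tfrac12|V(\Gamma_1)| - d_{\Klein}(\Gamma_1,\Gamma_2).
\]
By Definition~\ref{defn:chi-orb}, $\chi^{orb}_4(\Gamma_1,\Gamma_2;s)$ is the maximum of $\chi^{orb}$ over exactly this class of foams — the totally orientable seamed foamy cobordisms between $\Gamma_1$ and $\Gamma_2$ without bicolored spheres (possibly also ranging over the choice of total orientation). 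Hence $\vv{F}$ witnesses
\[
\chi^{orb}_4(\Gamma_1,\Gamma_2;s) \geq \chi^{orb}(\vv{F}) = -\tfrac12|V(\Gamma_1)| - d_{\Klein}(\Gamma_1,\Gamma_2),
\]
and rearranging gives $-\chi^{orb}_4(\Gamma_1,\Gamma_2;s) \leq d_{\Klein}(\Gamma_1,\Gamma_2)+\tfrac12|V(\Gamma_1)|$, as claimed.

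The argument is essentially bookkeeping once Theorem~\ref{chi-orb s-m} is in hand; the only point that needs care is confirming that the cobordism produced by that theorem is genuinely admissible in the sense of Definition~\ref{defn:chi-orb} — that is, that ``seamed, with every singular edge running from $\vv{\Gamma_1}$ to $\vv{\Gamma_2}$, totally orientable, and containing no bicolored sphere'' matches the admissibility condition there — and noting that passing between the totally oriented setting and the unoriented statement of the corollary is harmless, because the displayed inequality holds for every transported total orientation (and the hypothesis implicitly assumes $\Gamma_1$ and $\Gamma_2$ are related by crossing changes, so that such a distance-realizing sequence exists).
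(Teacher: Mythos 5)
Your proposal is correct and follows essentially the same route as the paper: realize $d_{\Klein}$ by a minimizing crossing-change sequence, invoke Theorem~\ref{chi-orb s-m} to produce a totally orientable seamed foamy cobordism without bicolored spheres of the stated orbifold Euler characteristic, and compare with the maximum defining $\chi_4^{orb}(\Gamma_1,\Gamma_2;s)$. Your extra care in transporting a total orientation along the sequence is exactly the point covered by Remark~\ref{rem:extending_TOs_to_foams}, so nothing is missing.
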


\subsection{Organization}

Section \ref{sec:defs} collects all major definitions, including Klein graphs and unknotting numbers in \ref{subsec:klein}, orientations and foams in \ref{sec:littleorientations}, orbifold Euler characteristics in \ref{subsec:orbifolds}, and signatures in \ref{sec:sigs}. In Section \ref{sec:totalorientations}, we study the structure of total orientatons on Klein graphs, addressing foams with seam vertices in \ref{subsec:foams with seams}, orientations in \ref{sec:totalorientationsverticesandseamvertices}, and cobordisms in \ref{subsec:cobordisms}. 
In Section \ref{sec:results} we prove our main results: Theorem \ref{thm:main}, Theorem \ref{thm:GammaSig} and Theorem \ref{chi-orb s-m}.
In Section \ref{sec:examples} we present several examples; in \ref{subsec:improvebounds} we give improved bounds on unknotting number for a family of theta curves, and in \ref{subsec:strong sig} we compare the bounds of Theorem \ref{thm:main} with the notion of \emph{strong} signature. Lastly, an appendix aggregates the behavior of various invariants under reversal, mirroring and connected sum for convenience.

\section{Definitions}
\label{sec:defs}

\subsection{Klein graphs and unknotting numbers}
\label{subsec:klein}

\begin{defn}[Klein graphs]
A \emph{Klein graph} $\Gamma$ is a finite trivalent graph properly embedded in $S^3$ with a 3-coloring: each edge is colored either red, blue, green, each vertex of the graph is incident to edges of all three colors, and the graph contains all three colors. 
We denote red, blue, green by $r$, $b$, $g$, respectively, and often write $i,j,k$ for arbitrary distinct colors in $\{r,g,b\}$. 
The name arises from the fact that for a given Klein graph there is a regular orbifold cover of $S^3$ with branch set the Klein graph and with deck group the Klein group $\Z/2\Z \times \Z/2\Z$. A Klein graph is \emph{3-Hamiltonian} if for any two distinct colors the union of all edges of those colors is a single cycle. (Observe that a 3-Hamiltonian Klein graph is connected.) 
Klein graphs may have `knot components,' i.e. components without vertices. 
We will assume throughout that Klein graphs are smoothly embedded (except at vertices) and work with surfaces up to diffeomorphism. Throughout, we let $|V(\Gamma)|$ denote the number of vertices of $\Gamma$. The number of edges of $\Gamma$ is equal to $3|V(\Gamma)|/2$ and so the Euler characteristic $\chi(\Gamma) = -|V(\Gamma)|/2$.
\end{defn}

\begin{defn}[Bicolored links, component count]
    Suppose $\Gamma$ is a Klein graph.  For each color $i$, let $\Gamma_i$ be the union of the edges colored $i$.  For each pair of distinct colors $i,j$, the {\em bicolored link} $\Gamma_{ij}$ is (the closure of) the union $\Gamma_i \cup \Gamma_j$.  Observe that $\Gamma$ is $3$-Hamiltonian if and only if each bicolored link is a knot.

    Define the {\em component count}  $\mu(L)$ of a link $L$ to be its number of components and the {\em component count} $\mu(\Gamma)$ of a Klein graph $\Gamma$ to be the total number of components of its bicolored links.  Hence $\mu(\Gamma) = \mu(\Gamma_{rb})+\mu(\Gamma_{bg})+\mu(\Gamma_{rg})$.
\end{defn}

\begin{defn}[Crossing changes]
\label{defn:crossing changes}
A crossing change on a Klein graph can occur either between edges of the same color or between two distinct colors.  We call these types of crossing changes respectively a \emph{same-color crossing change} and a \emph{mixed-color crossing change}, or more simply a \emph{same crossing change} and a \emph{mixed crossing change}.
For a sequence of crossing changes on a Klein graph, we let $s$ denote the number of same crossing changes and $m$ the number of mixed crossing changes. The \emph{Klein Gordian distance} $d_{\Klein}(\Gamma_1, \Gamma_2)$ between two Klein graphs is the minimum of $s + m/2$ over all sequences of crossing changes converting $\Gamma_1$ into $\Gamma_2$, or $\infty$ if there is no such sequence. 

The \emph{Klein unknotting number} $u_{\Klein}(\Gamma)$ of a Klein graph $\Gamma$ is $d_{\Klein}(\Gamma, U)$ where $U$ is a Klein graph embedded in a 2-sphere in $S^3$. Of course, the number $u_{\Klein}(\Gamma)$ is finite if and only if $\Gamma$ is abstractly planar. (If $\Gamma$ is abstractly planar, then by \cite{Mason-homeocurves}, up to isotopy in $S^3$, there is a unique such $U$.) 
\end{defn}

\begin{defn}[Connected sums]
Let $\Gamma_1$ and $\Gamma_2$ be two Klein graphs in distinct copies of $S^3$. 

An {\em order $2$ connected sum} or {\em edge connected sum} $\Gamma_1 \#_2 \Gamma_2$ is obtained as follows. Choose a point in the interior an edge of each graph of the same color and delete  the interior of a closed ball about each point meeting its graph in an arc in the interior of its edge. Glue together the remnants by an orientation reversing homeomorphism of the resulting two $S^2$ boundary components so that the pairs of points of $\Gamma_1$ and $\Gamma_2$ in those spheres match up.  The result $\Gamma_1 \#_2 \Gamma_2$ is again a Klein graph.  Note that the result depends on which edges contain the points and how the pairs of points in the identified spheres match up. 

An {\em order $3$ connected sum} or {\em vertex connected sum} $\Gamma_1 \#_3 \Gamma_2$ is obtained similarly:  Here we choose a vertex of each graph and  delete  the interior of a closed ball about each of these vertices meeting its graph in three proper subarcs of the edges meeting that vertex.  Now glue together the remnants by an orientation reversing homeomorphism of the resulting two $S^2$ boundary components so that the triples of points of $\Gamma_1$ and $\Gamma_2$ in those spheres match up according to their colors.  The result $\Gamma_1 \#_3 \Gamma_2$ is again a Klein graph.  Note that the result depends only on which vertices are chosen \cite{Wolcott}. 

In Section~\ref{sec:littleorientations} we introduce a notion of  {\em total orientation} on a Klein graph. 
 When two totally oriented Klein graphs have total orientations that match on a pair of edges or are opposite on a pair of vertices, these connected sum operations may be performed along the pair in a manner respecting the total orientations in the obvious way to confer a total orientation on the resulting sum.
\end{defn}

\subsection{Orientations and Foams} \label{sec:littleorientations}

Here we give a first pass at defining a kind of orientation for Klein graphs and foams that will enable an Euler characteristic count in Definition~\ref{defn:chi-orb} that is analogous to the slice genus of an oriented link.   In Section~\ref{sec:totalorientations} we more fully develop this orientation and its implications. 

\begin{defn}[Klein graph: Total orientation, double orientation]
\label{def:total orientation definition graph}
A {\em total orientation} on a Klein graph $\Gamma$ is a choice of orientation on each of its bicolored links.  A total orientation on $\Gamma$ induces a {\em double orientation} on each of its edges; that is, an $i$-colored edge inherits an $ij$-orientation from the orientation on $\Gamma_{ij}$ and an $ik$-orientation from the orientation on $\Gamma_{ik}$.    If the $ij$- and $ik$-orientations on an $i$-edge are coherent, we say the doubly oriented edge has a {\em parallel} double orientation and its sign is $+$.  Otherwise the doubly oriented edge has an {\em antiparallel} double orientation and its sign is $-$. 
\end{defn}
We denote mirroring and reversal by $\mir(\Gamma)$ and $-\Gamma$, respectively, and for emphasis, sometimes indicate totally oriented objects with an arrow, \textit{e.g.} $(\vv{\Gamma})$.
\begin{defn}[Foams]\label{defn:foam}
    A \emph{foam (with boundary)} is a properly embedded smooth two-dimensional CW-complex $F$ in a four manifold (here $B^4$ or $S^3\times I$) in which every point in the interior or the boundary is a regular point or a singular point, meaning it has a neighborhood diffeomorphic to one of the pictures in Figure \ref{fig:nbhds}. The singular points $s(F)$ form a 1--complex, called the \emph{seam points}.  Interior vertices of $s(F)$ are called \emph{seam vertices}. 
 (The boundary of $s(F)$ is the collection of univalent vertices of $s(F)$, and these are the vertices of the trivalent graph $\bdry F$.) The \emph{facets} of a foam are the connected components of $F-s(F)$, consisting of sets of regular points, and are smooth surfaces. 
 
 A \emph{Klein foam (with boundary)} is a foam with a 3-coloring whose boundary is a union of Klein graphs. That is, each facet of the foam is colored either red, blue, green and each edge of $s(F)$ is incident to facets of all three colors.  In a Klein foam with boundary the vertices of the bounding Klein graphs are the end points of $s(F)$, and the edges are part of $F-s(F)$.
\end{defn}

\begin{defn}[Klein foam: Total orientation, double orientation]
\label{def:total orientation definition foam}
A Klein foam $F$ is {\em totally orientable} (in \cite{KR}, this is termed {\em admissible}) if each of its bicolored surfaces are orientable.   As with a Klein graph $\Gamma$ and its edges, a {\em total orientation} on a totally orientable $F$ is a choice of orientation on each bicolored surface, and this induces a {\em double orientation} on each of its facets.

Further observe that a total orientation on a Klein foam $F$ with boundary $\Gamma = \bdry F$ induces a total orientation on the Klein graph $\Gamma$.
\end{defn}

\begin{defn}[Foamy cobordisms]\label{defn:foamycobordism}
Let $F$ be a Klein foam that is properly embedded in $S^3 \times [0,1]$. Let $\bdry F = \Gamma_0 \cup -\mir\Gamma_1$, where $\Gamma_0\subset S^3\times \{0\}$ and $\Gamma_1\subset S^3\times \{1\}$ are each Klein graphs or the empty set. We call such a foam a \emph{foamy cobordism}. Note that $s(F)$ is a 1-manifold with boundary if and only if $F$ has no seam vertices. Three particular instances of a foamy cobordism are 
\begin{itemize}
    \item a \emph{slice foam} for $\Gamma_0$, in which $\Gamma_1$ is the empty set but $\Gamma_0$ is not,
    \item a \emph{spanning foam} for $\Gamma_0$, which is a slice foam without seam vertices, and 
    \item a \emph{seamed foamy cobordism} from $\Gamma_0$ to $\Gamma_1$, which is a foamy cobordism without seam vertices in which each edge component of $s(F)$ has one endpoint lying on each of $\Gamma_0$ and $\Gamma_1$;  closed components of $s(F)$ are permitted.  
\end{itemize}
By attaching a $4$-ball to $S^3 \times\{1\}$, spanning foams and slice foams may be regarded as being properly embedded in $B^4$.
\end{defn}

\begin{figure}
    \centering
    \includegraphics[width=.5\textwidth]{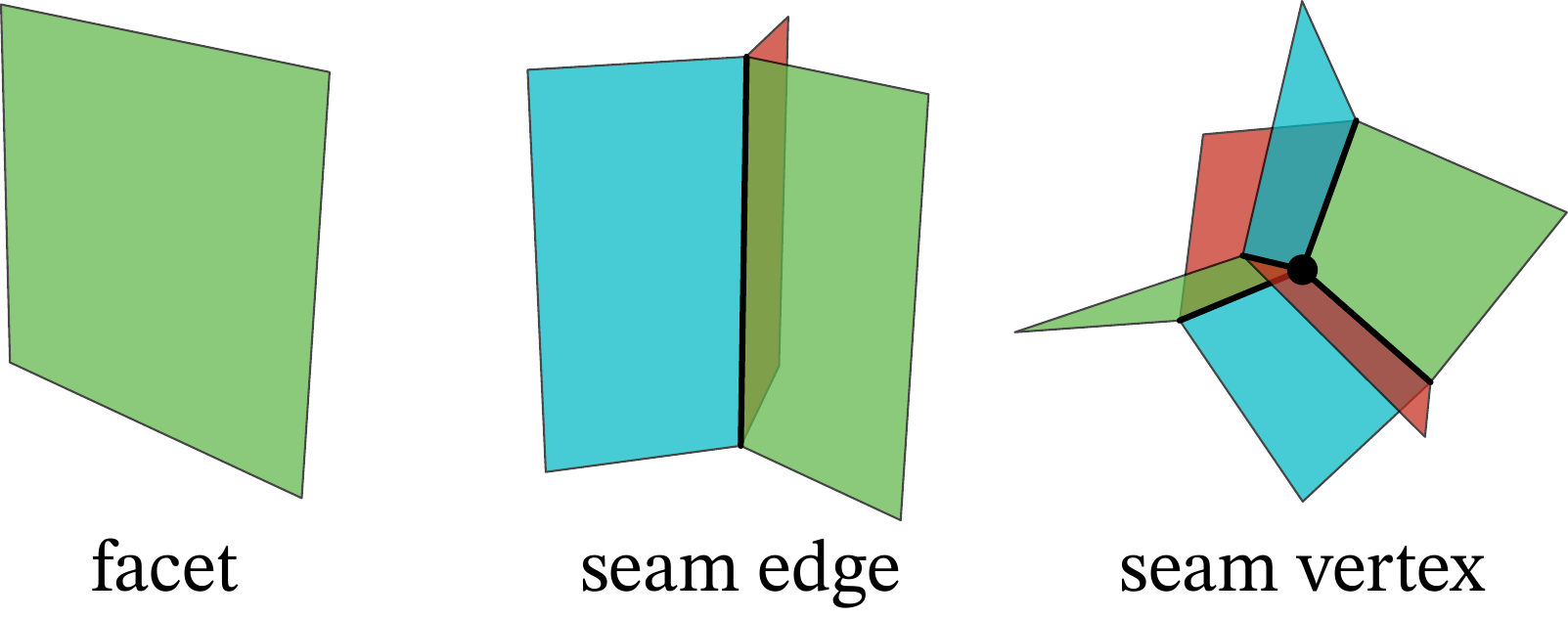}
    \caption{Neighborhoods of a point in a facet, a point in a seam edge, and a seam vertex.}
    \label{fig:nbhds}
\end{figure}

\begin{defn}[Bicolored surfaces]
Suppose $F$ is a Klein foam.  For each color $i$, let $F_i$ be the union of the facets colored $i$.  For each pair of distinct colors $i,j$, the {\em bicolored surface} $F_{ij}$ is the closure of $F_i \cup F_j$. Observe that $F_{ij}$ is indeed a compact surface and contains $s(F)$, but may be disconnected.  A \emph{bicolored sphere} of a Klein foam $F$ is a spherical component of some bicolored surface. When a Klein foam is connected, any closed component of a bicolored surface necessarily contains both colors.  Note that a monocolored sphere (a sphere component of a single color) is a bicolored sphere by definition.
\end{defn}

\begin{defn}[The quantity $|\sv|$]
\label{defn:svabs}
For a Klein graph $\Gamma$ in $S^3$, let $|\sv|(\Gamma)$ be the minimum number of seam vertices among the slice foams bounded by $\Gamma$.  If $\vv{\Gamma}$ is $\Gamma$ with a total orientation, let $|\sv|(\vv{\Gamma})$ be the minimum number of seam vertices among the totally oriented slicing foams bounded by $\vv{\Gamma}$.

Similarly, for a pair of Klein graphs $\Gamma_0$ and $\Gamma_1$, let $|\sv|(\Gamma_0, \Gamma_1)$ be the minimum number of seam vertices among the foamy cobordisms from $\Gamma_0$ to $\Gamma_1$. Analogously define $|\sv|(\vv{\Gamma_0},\vv{\Gamma_1})$ when the two graphs are totally oriented.
\end{defn}

\begin{remark}\label{rem:svlift}
In fact, as we will see in Section~\ref{sec:totalorientations}, seam vertices of totally oriented foams have two isomorphism types.  This observation with a choice which to call `positive' lifts $|\sv|$ to a $\Z$ valued function $\sv$ on totally oriented Klein graphs so that the absolute value of $\sv(\vv{\Gamma})$ is $|\sv|(\vv{\Gamma})$.
In Lemma~\ref{lem:sv-combinatorics}, we show that $\sv(\vv{\Gamma})$ is easily determined from the combinatorics of the totally oriented Klein graph.  
\end{remark}

\begin{lemma}\label{lem:existenceofTO+noBCS2}
    Given a totally orientable Klein foam $F$, there exists a totally orientable Klein foam $F'$ without bicolored spheres such that $\bdry F= \bdry F'$.
\end{lemma}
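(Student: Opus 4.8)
The plan is to induct on the number of facets of $F$, at each stage excising a single bicolored sphere while strictly decreasing the facet count and preserving both the boundary and total orientability; since the facet count is a nonnegative integer, this process terminates at the desired $F'$. So suppose $\Sigma$ is a bicolored sphere of $F$.

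If $\Sigma$ is monocolored --- a single facet that happens to be a sphere --- then $\Sigma$ meets $s(F)$ in nothing, so no other cell of $F$ is attached to it and $\Sigma$ is a connected component of $F$. Delete it: the result has the same boundary, is still totally orientable (each bicolored surface merely loses, at most, one spherical component), and has one fewer facet.

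If $\Sigma$ is genuinely bicolored, say $\Sigma\subseteq F_{rg}$, set $G:=s(F)\cap\Sigma$. Since $\Sigma$ is a closed component of the surface $F_{rg}$, it is disjoint from $\bdry F$, its seam graph $G$ has complementary regions exactly the $r$- and $g$-facets contained in $\Sigma$, and these regions are properly $2$-colored because along each seam edge the two sides lying in $F_{rg}$ carry the two colors $r,g$; moreover each seam vertex of $F$ on $\Sigma$ is a $4$-valent vertex of $G$. Let $\Sigma_r,\Sigma_g$ be the closures of the $r$-, resp.\ $g$-, facets of $\Sigma$, so $\bdry\Sigma_r=\bdry\Sigma_g=G$ and $\Sigma=\Sigma_r\cup_G\Sigma_g$. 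I would then define $F'$ by deleting every $g$-facet of $\Sigma$ (there is at least one), recoloring every $r$-facet of $\Sigma$ from red to blue, and smoothing the outcome into a genuine foam by a small isotopy supported near $G$. That $F'$ is a Klein foam is a local check at points of $G$: at a point of a seam edge the model passes from three colored half-disks to two blue half-disks smoothed into a disk, and at a seam vertex on $\Sigma$ the model (a cone on the $1$-skeleton of $K_4$, the three colors being the three perfect matchings) loses the $g$-matching, leaving the cone on a $4$-cycle all of whose edges are recolored blue --- again a disk; every surviving seam edge of $F'$ lies off $G$, hence has the same three incident facets, hence all three colors. Since every deleted or recolored facet lies in $\Sigma$, disjoint from $\bdry F$, we get $\bdry F'=\bdry F$; and we removed at least one facet and added none, so $F'$ has strictly fewer facets.

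It remains to see $F'$ is totally orientable, and this is the step I expect to be the real obstacle. Computing the bicolored surfaces: $F'_{rg}=F_{rg}\setminus\Sigma$ is a subsurface of the orientable $F_{rg}$; $F'_{rb}$ is, up to the smoothing isotopy, ambient isotopic to $F_{rb}$ (the recolored $r$-facets re-enter as blue, so the underlying set is unchanged), hence orientable; and, putting $Y:=\overline{F_{gb}\setminus\Sigma_g}$, one has $F_{gb}=Y\cup_G\Sigma_g$ and $F'_{gb}=Y\cup_G\Sigma_r$ with the \emph{same} piece $Y$. So everything reduces to the elementary surface fact: if $Y\cup_G\Sigma_g$ is orientable and $\Sigma_r\cup_G\Sigma_g$ is a sphere, then $Y\cup_G\Sigma_r$ is orientable. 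To prove this I would fix an orientation of the sphere $\Sigma$ (inducing orientations of $\Sigma_r,\Sigma_g$) and one of $F_{gb}$ (inducing orientations of $Y,\Sigma_g$); after reversing the $\Sigma$-orientation on individual components of $\Sigma_g$ as needed --- legitimate since each edge-circle of $G$ borders a unique component of $\Sigma_g$ --- the two orientations of $\Sigma_g$ agree, whence the $Y$-orientation is compatible along $G$ with the induced orientation of $\Sigma_r$ and the two together orient $F'_{gb}$. (Equivalently, an orientation-reversing loop of $F'_{gb}$ made transverse to $G$ would yield one in $F_{gb}$, using that $Y$, $\Sigma_r$, $\Sigma_g$ are orientable and $\Sigma$ is a sphere.) The seam vertices of $G$ --- where $\Sigma_r,\Sigma_g$ are only pinched surfaces although $F_{gb}$, $F'_{gb}$, $\Sigma$ stay smooth --- impose no additional compatibility conditions, so they cause no trouble. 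Finally, apply the inductive hypothesis to $F'$, which has strictly fewer facets and the same boundary, to conclude.
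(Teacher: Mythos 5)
Your excision-and-recolor strategy is entirely different from the paper's proof, which simply connect-sums a torus into the interior of one facet of each bicolored sphere: that raises the genus of the offending component while visibly preserving the boundary and total orientability (adding a handle to an orientable surface keeps it orientable), and it finishes in one line. More importantly, your argument has a genuine gap at exactly the step you flagged. The claim that $F'_{gb}=Y\cup_G\Sigma_r$ is orientable is false in general, and the flipping argument is circular: reversing the $\Sigma$-orientation on a component $D$ of $\Sigma_g$ so that it agrees with the $F_{gb}$-orientation also reverses the induced orientation on the circles of $\bdry D$, and therefore destroys the compatibility of $\Sigma_g$ with $\Sigma_r$ along exactly those circles. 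Repairing this by flipping components of $\Sigma_r$ fails whenever a single component of $\Sigma_r$ abuts both a flipped and an unflipped component of $\Sigma_g$. The true obstruction is a $\Z/2$-valued class on the bipartite gluing graph whose vertices are the components of $Y$ and of $\Sigma_r$ and whose edges are the circles of $G$; orientability of $F_{gb}$ and of $\Sigma$ kills it only on cycles of the \emph{other} two gluing graphs, not on cycles of this one.

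Here is a concrete failure. Let $N\subset\R^4$ be an embedded Klein bottle (e.g.\ the standard Lawson-type embedding), cut by four disjoint fiber circles $C_1,C_3,C_4,C_2$ into four annuli colored, in cyclic order, red, blue, green, blue; cap $C_1,C_2$ with green disks and $C_3,C_4$ with red disks, all disjointly embedded with interiors off $N$ (for the standard embedding the fibers bound disjoint round disks meeting $N$ only along their boundaries). Each seam circle of the resulting closed foam $F$ meets all three colors, and every bicolored surface of $F$ is a chain of annuli capped by two disks, hence an embedded sphere; so $F$ is totally orientable, and $\Sigma=(\text{green disk})\cup(\text{red annulus})\cup(\text{green disk})$ is a bicolored sphere. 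Applying your recipe to $\Sigma$ deletes the two green disks and recolors the red annulus blue, and the resulting $F'_{gb}$ is exactly the four annuli, i.e.\ the Klein bottle $N$. (In this particular example excising in a different order, or deleting red and recoloring green instead, happens to succeed, but your induction as written must work for an arbitrary bicolored sphere and an arbitrary choice of which color to delete.) So the inductive step fails as stated; the paper's handle-attachment sidesteps the issue entirely.
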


\begin{proof}
    Let $F$ be a totally orientable Klein foam. Since $F$ is compact, it has at most finitely many bicolored spheres.
    For each bicolored sphere of $F$, choose a facet of $F$ in that sphere and connect sum a torus onto its interior.  The total orientation of $F$ extends across these connected sums. Thus the resulting foam $F'$ continues to be totally orientable with $\bdry F = \bdry F'$ yet has no bicolored spheres.
\end{proof}

\subsection{Orbifold Euler Characteristics}
\label{subsec:orbifolds}

\begin{defn}
\label{defn:chi-orb} \phantom{x}

\begin{itemize}
    \item For a Klein foam $F$, its {\em orbifold Euler characteristic} is
\[\chi^{orb}(F) = \frac{1}{2}\chi(F) - \frac{1}{4}\chi(s(F)).\]

\item 
For a totally oriented Klein graph $\vv{\Gamma}$, its {\em slice orbifold Euler characteristic} is 
\[ \chi_4^{orb}(\vv{\Gamma}) = \max \chi^{orb}(F)\]
where the maximum is taken over all totally oriented slice foams $F$ in $B^4$ with $\vv{\Gamma}=\bdry F$ having $|\sv|(\vv{\Gamma})$ seam vertices and no bicolored spheres. 

\item 
For a Klein graph $\Gamma$ (without a specified total orientation), 
its {\em slice orbifold Euler characteristic} is 
\[ \chi_4^{orb}(\Gamma) = \max \chi^{orb}(F)\]
where the maximum is taken over all totally orientable slice foams $F$ in $B^4$ with $\Gamma=\bdry F$ having $|\sv|(\bdry F)$ seam vertices and no bicolored spheres.

\item For two Klein graphs $\Gamma_1, \Gamma_2$ that cobound a totally orientable seamed foamy cobordism, the {\em seamed cobordism characteristic} is 
\[
\chi_4^{orb}(\Gamma_1, \Gamma_2;s) = \max \chi^{orb}(G)
\]
where the maximum is taken over all totally orientable seamed foamy cobordisms $G$ from $\Gamma_1$ to $\Gamma_2$ having no bicolored spheres.  
\end{itemize}

Lemma~\ref{lem:existenceofTO+noBCS2} gives an easy way to eliminate bicolored spheres.    
Lemmas~\ref{lem:nonHamSliceFoam-simpler} and \ref{lem:noHamSeamCobord-simpler} show that these maxima are bounded above.
\end{defn}

\begin{remark}
As we observe in Lemmas~\ref{lem:notoslicefoam} and \ref{lem:notoseamedfoamycobord}, a Klein graph need not bound a totally orientable slice foam and there need not be a seamed foamy cobordism between two Klein graphs with the same number of vertices.  Nevertheless, as shown in Theorem~\ref{thm:doKleinboundstofoamwithseamvertices}, these always exist if we permit the foams to have seam vertices.
\end{remark}

\begin{remark}\label{rem:orbchi}
\begin{enumerate}
    \item Definition \ref{defn:chi-orb} agrees with standard definitions of the orbifold Euler characteristic for an orbifold $Q$, 
\[
    \chi(Q) =\sum_{q\in Q}\frac{(-1)^{\dim q}}{|G(q)|},
\]
where $G(q)$ is the local group at the open cell $q$ and the sum is over all open cells \cite{Cooper}. In our case, the local group of open 2-dimensional cells of $F$ and the open edges of $\Gamma$ is $\Z/2\Z$, acting by the involution of the double cover. The local group of the seams (including seam vertices) is the entire Klein group.

\item Permitting  arbitrarily many bicolored spheres allows one to construct totally orientable foams having the same boundary yet with arbitrarily large $\chi_4^{orb}$ as follows. Let $F$ be any totally orientable Klein foam and let $F_0$ be the boundaryless Klein foam in $S^4$ that is a suspension of a theta curve. Letting $F'$ be a connected sum of $F$ and $F_0$ along points in the interiors of a facets of $F$ and $F_0$ of the same color, as shown in Figure~\ref{fig:bubbling}(Left), we see that $\chi_4^{orb}(F') = \chi_4^{orb}(F)+\frac12$.

\item Permitting arbitrarily many seam vertices similarly allows one to construct totally orientable foams having the same totally oriented boundary yet with arbitrarily large $\chi_4^{orb}$.  Let $F$ be any totally orientable Klein foam and let $F_0$ be the boundaryless Klein foam in $S^4$ that is the suspension of the tetrahedral graph.  Note that $F_0$ has two seam vertices.   Letting $F'$ be the connected sum of $F$ and $F_0$ along points in the interior of seam edges of $F$ and $F_0$, as shown in Figure~\ref{fig:bubbling}(Right), we see that $\chi_4^{orb}(F') = \chi_4^{orb}(F)+1$.  Also note that if $F$ has no bicolored spheres, then $F'$ doesn't either.  
\end{enumerate}
\end{remark}

\begin{figure}
    \centering
    \includegraphics[width=.9\textwidth]{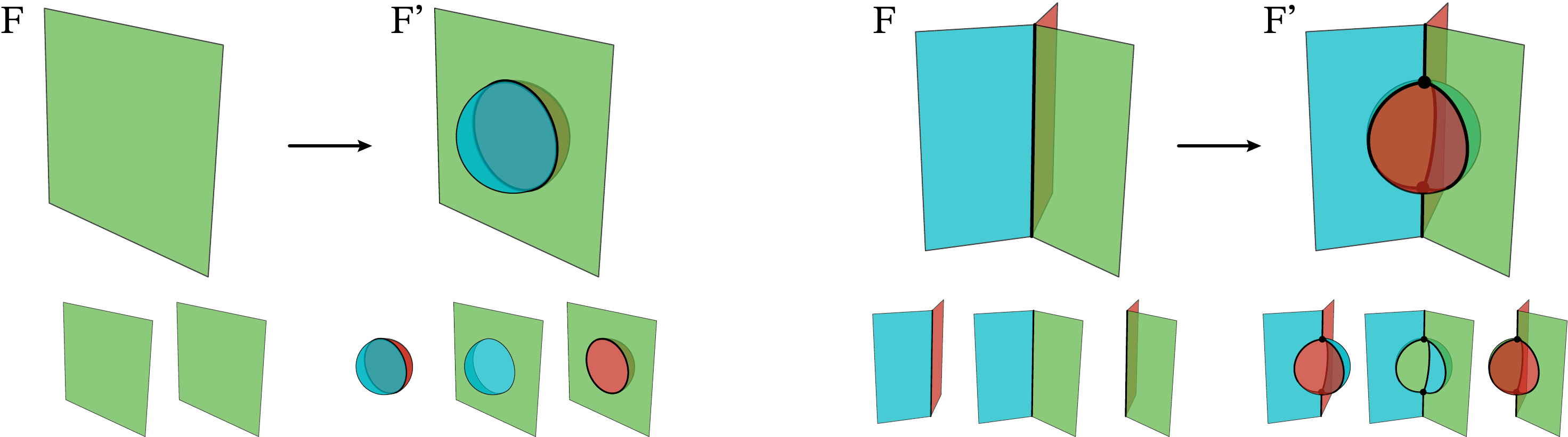}
    \caption{(Left) A portion of a facet of $F$ and then the result $F'$ of its connected sum with the suspension of the theta graph. (Right)  A neighborhood of a point in a seam edge of $F$ and then the result $F'$ of its connected sum with the suspension of the tetrahedral graph. }
    \label{fig:bubbling}
\end{figure}

\begin{lemma}\label{lem:chiorbcounts}
For a foam $F$ without seam vertices, we have
\[4\chi^{orb}(F) = \left( \chi(F_{rb}) + \chi(F_{bg}) + \chi(F_{rg}) \right) - |V(\bdry F)|\]
\end{lemma}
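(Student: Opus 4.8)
The plan is to compute both sides of the claimed identity directly from the definition $\chi^{orb}(F) = \tfrac12\chi(F) - \tfrac14\chi(s(F))$ by relating $\chi(F)$ and $\chi(s(F))$ to the Euler characteristics of the three bicolored surfaces and the vertex count $|V(\bdry F)|$. First I would observe that since $F$ has no seam vertices, $s(F)$ is a compact $1$--manifold with boundary; its boundary points are exactly the trivalent vertices $V(\bdry F)$ of the boundary graph, while its interior consists of circles and arcs. Hence $\chi(s(F)) = \tfrac12|V(\bdry F)|$, because each arc contributes $\chi = 1$ and has two endpoints among $V(\bdry F)$, each such endpoint being the end of exactly one seam arc, while the closed components contribute $0$.

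Next I would express $\chi(F_{rb}) + \chi(F_{bg}) + \chi(F_{rg})$ in terms of $\chi(F)$ and $\chi(s(F))$. Stratify $F$ into its open facets (colored $r$, $g$, or $b$) together with $s(F)$. Each bicolored surface $F_{ij}$ is the union of the open facets of colors $i$ and $j$ together with all of $s(F)$ (every seam edge is incident to facets of all three colors, so it lies in every bicolored surface). Writing $\chi$ additively over this stratification, $\chi(F_{ij})$ equals the sum of $\chi$ of the open $i$-- and $j$--facets plus $\chi(s(F))$. Summing over the three pairs $\{i,j\}$, each open facet of a given color is counted in exactly two of the three bicolored surfaces, so the facet contributions total $2\bigl(\chi(F) - \chi(s(F))\bigr)$, and the seam is counted three times, contributing $3\chi(s(F))$. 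Therefore
\[
\chi(F_{rb}) + \chi(F_{bg}) + \chi(F_{rg}) = 2\chi(F) + \chi(s(F)).
\]

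Finally I would combine these: from the displayed equation, $2\chi(F) = \bigl(\chi(F_{rb}) + \chi(F_{bg}) + \chi(F_{rg})\bigr) - \chi(s(F))$, and substituting $\chi(s(F)) = \tfrac12|V(\bdry F)|$ gives
\[
4\chi^{orb}(F) = 2\chi(F) - \chi(s(F)) = \bigl(\chi(F_{rb}) + \chi(F_{bg}) + \chi(F_{rg})\bigr) - 2\chi(s(F)) = \bigl(\chi(F_{rb}) + \chi(F_{bg}) + \chi(F_{rg})\bigr) - |V(\bdry F)|,
\]
as desired. The only delicate point—the one I would be most careful about—is the bookkeeping in the additivity of Euler characteristic over the non-closed strata, in particular confirming that $s(F)$ genuinely lies inside every bicolored surface (which follows from the Klein coloring condition at seam edges) and that each boundary vertex is an endpoint of exactly one seam arc (which follows from the local model of a trivalent vertex of $\bdry F$ in Figure~\ref{fig:nbhds}, where three facets of distinct colors meet along a single seam arc running into the vertex). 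Everything else is routine once these incidence facts are pinned down.
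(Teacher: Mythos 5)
Your proposal is correct and follows essentially the same route as the paper: both compute $\chi(s(F)) = \tfrac12|V(\bdry F)|$ from the fact that $s(F)$ is a $1$--manifold whose boundary is $V(\bdry F)$, and both observe that the sum of the bicolored Euler characteristics counts $F - s(F)$ twice and $s(F)$ three times, giving $\chi(F_{rb})+\chi(F_{bg})+\chi(F_{rg}) = 2\chi(F)+\chi(s(F))$ before substituting into the definition of $\chi^{orb}$. The bookkeeping details you flag as delicate are exactly the incidence facts the paper also relies on.
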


\begin{proof}
Observe that the sum $\chi(F_{rb}) + \chi(F_{bg}) + \chi(F_{rg})$ counts the Euler characteristic of $s(F)$ three times but of $F-s(F)$ only twice.
Since there are no seam vertices, each component of $s(F)$ is either a circle or an edge with its endpoints as vertices of the Klein graph $\bdry F$.   Hence $\chi(s(F)) = \frac12 |V(\bdry F)|$.  Now, beginning with Definition~\ref{defn:chi-orb}, we may count
\begin{align*}
4\chi^{orb}(F) &= 2\chi(F) - \chi(s(F))\\ 
&= 2 \chi(F-s(F)) + \chi(s(F))\\
&= \left(2\chi(F-s(F))+3\chi(s(F)) \right) - 2\chi(s(F))  \\
&= \left( \chi(F_{rb}) + \chi(F_{bg}) + \chi(F_{rg}) \right) - |V(\bdry F)|.
\end{align*}
\end{proof}

The following two lemmas establish that $\chi^{orb}_4(F)$ and $\chi^{orb}_4(\Gamma_0, \Gamma_1;s)$ are bounded above and thus  $\chi^{orb}_4(\Gamma)$ is well-defined. 

\begin{lemma}\label{lem:nonHamSliceFoam-simpler}
    Suppose $\Gamma$ is a Klein graph that bounds a totally orientable slice foam.  Then for any totally orientable slice foam $F$ without bicolored spheres such that $\Gamma = \bdry F$, 
    $4 \chi^{orb}(F)  \leq  \mu(\Gamma)-|V(\Gamma)|$.
    Consequently, $4\chi^{orb}_4(\Gamma) \leq \mu(\Gamma)-|V(\Gamma)|$.  
    
    Furthermore, if $\Gamma$ has no knot components then $4\chi^{orb}_4(\Gamma) \leq \frac12|V(\Gamma)|$. 
\end{lemma}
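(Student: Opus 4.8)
The plan is to use Lemma~\ref{lem:chiorbcounts} to convert the orbifold Euler characteristic into a sum of ordinary Euler characteristics of the three bicolored surfaces $F_{rb}, F_{bg}, F_{rg}$, and then bound each $\chi(F_{ij})$ from above using the fact that $F_{ij}$ is a compact orientable surface with boundary $\Gamma_{ij}$. Concretely, since $F$ has no seam vertices, Lemma~\ref{lem:chiorbcounts} gives $4\chi^{orb}(F) = \chi(F_{rb}) + \chi(F_{bg}) + \chi(F_{rg}) - |V(\Gamma)|$, so it suffices to show $\chi(F_{rb}) + \chi(F_{bg}) + \chi(F_{rg}) \le \mu(\Gamma)$. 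Each $F_{ij}$ is a properly embedded compact orientable surface in $B^4$ with $\bdry F_{ij} = \Gamma_{ij}$, a link with $\mu(\Gamma_{ij})$ components; since $F$ has no bicolored spheres, no component of $F_{ij}$ is a closed sphere, and in fact no component is closed at all once we check that every component of $F_{ij}$ meets the boundary — a component of $F_{ij}$ disjoint from $\Gamma$ would have to be a closed bicolored surface, which by the "no bicolored spheres" hypothesis has genus $\ge 1$ and hence $\chi \le 0$, so it can be discarded or simply bounded by $0$. For the components that do meet the boundary, a connected orientable surface with at least one boundary component has $\chi \le 1$, with equality only for a disk; more precisely a connected orientable surface with $b \ge 1$ boundary circles has $\chi = 2 - 2g - b \le 2 - b \le 1$. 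Summing over all components of $F_{ij}$, and using that the boundary circles are partitioned among the components, we get $\chi(F_{ij}) \le \mu(\Gamma_{ij})$ (each boundary circle contributes at most $+1$ and the genus and extra-boundary terms only decrease this). Adding the three inequalities yields $\sum \chi(F_{ij}) \le \mu(\Gamma_{rb}) + \mu(\Gamma_{bg}) + \mu(\Gamma_{rg}) = \mu(\Gamma)$, which is exactly what is needed; taking the maximum over all such $F$ gives $4\chi^{orb}_4(\Gamma) \le \mu(\Gamma) - |V(\Gamma)|$.

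For the final sentence, suppose $\Gamma$ has no knot components, i.e. every component of $\Gamma$ contains a vertex. The point is that the bound $\mu(\Gamma) \le \frac{1}{2}|V(\Gamma)| + |V(\Gamma)|$... more carefully: I want to show $\mu(\Gamma) - |V(\Gamma)| \le \frac12 |V(\Gamma)|$, i.e. $\mu(\Gamma) \le \frac32 |V(\Gamma)|$. This should be a purely combinatorial fact about Klein graphs with no knot components. Each bicolored link $\Gamma_{ij}$ is a union of edges colored $i$ or $j$; since every vertex is trivalent with one edge of each color, the edges colored $i$ together with the edges colored $j$ form a disjoint union of cycles in the abstract graph passing alternately... actually $\Gamma_{ij}$ restricted to its abstract graph is a $2$-regular graph (at each vertex exactly the $i$-edge and the $j$-edge meet), hence a disjoint union of cycles, and the number of such cycles is at most the number of vertices they pass through. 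Every vertex lies on exactly one cycle of $\Gamma_{ij}$, and with no knot components every component of $\Gamma$ — hence the cycles — contains $\ge 1$ vertex (indeed a cycle in $\Gamma_{ij}$ with no vertex would be a knot component). So $\mu(\Gamma_{ij}) \le |V(\Gamma)|$... that only gives $\mu(\Gamma) \le 3|V(\Gamma)|$, too weak. The sharper count: each cycle of $\Gamma_{ij}$ has length $\ge 1$ in vertices but a $\thetan$-curve has $|V| = 2$ and $\mu = 3$, matching $\frac32|V|$; the tetrahedral graph has $|V| = 4$ and $\mu = 3$. I expect the right argument is that $\mu(\Gamma_{ij})$ counts cycles in a $2$-regular graph on $|V(\Gamma)|$ vertices where each cycle has even length $\ge 2$ when there are no knot components... actually no, cycles can have odd length. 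The cleanest route: handle one bicolored link cleverly — a cycle of $\Gamma_{ij}$ that passes through a single vertex would use that vertex's $i$-edge and $j$-edge as a loop, and summing lengths gives $\sum_{\text{cycles of }\Gamma_{ij}} (\text{vertices on cycle}) = |V(\Gamma)|$, so $\mu(\Gamma_{ij}) \le |V(\Gamma)|$ with equality iff every cycle is a monogon.

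\textbf{The main obstacle.} The first part is routine surface-topology bookkeeping. The genuinely delicate step is the "no knot components" refinement: getting $\mu(\Gamma) \le \frac32|V(\Gamma)|$ rather than the crude $3|V(\Gamma)|$. I expect this requires observing that the three $2$-regular subgraphs $\Gamma_{rb}, \Gamma_{bg}, \Gamma_{rg}$ are not independent — their cycle structures are linked because $\Gamma_r, \Gamma_g, \Gamma_b$ are perfect matchings on $V(\Gamma)$, so e.g. $\Gamma_{rb} \cup \Gamma_{rg}$ reuses the matching $\Gamma_r$. A promising counting identity: $\mu(\Gamma_{rb}) + \mu(\Gamma_{rg}) + \mu(\Gamma_{bg})$ equals the number of cycles in the union of three perfect matchings on $|V|$ points taken pairwise, and an Euler-characteristic computation on the abstract graph $\Gamma$ (which has $|V|$ vertices and $\frac32|V|$ edges, so $\chi(\Gamma) = -\frac12|V|$) together with the relation $\chi(\Gamma_{ij}) = \chi(\Gamma) + (\text{edges of color }k) = -\frac12|V| + \frac12|V| = 0$ for the abstract bicolored subgraph shows each abstract $\Gamma_{ij}$ has Euler characteristic $0$, i.e. is a disjoint union of cycles with total vertex count $|V|$; hence $\mu(\Gamma_{ij}) \le |V(\Gamma)|$, and one of the three can be sharpened using a parity or connectivity argument stemming from "no knot components" to bring the total down to $\frac32|V|$. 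I would finish by checking this against the extremal examples ($\thetan$-curve, tetrahedron, and more generally prisms/Möbius–Kantor-type webs) to confirm the constant $\frac12$ is correct and the argument is tight.
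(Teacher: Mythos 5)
Your proof of the main inequality is correct and is essentially the paper's argument: apply Lemma~\ref{lem:chiorbcounts}, then bound each $\chi(F_{ij})$ by $\mu(\Gamma_{ij})$ using that every component of $F_{ij}$ is orientable, is not a sphere, and satisfies $\chi = 2-2g-b \leq b$ when $b \geq 1$ and $\chi \leq 0$ when closed. That part needs no changes.

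The gap is in the final claim. You correctly reduce it to the combinatorial statement $\mu(\Gamma) \leq \tfrac32|V(\Gamma)|$, but you only establish $\mu(\Gamma_{ij}) \leq |V(\Gamma)|$ for each bicolored link, which gives the too-weak $\mu(\Gamma) \leq 3|V(\Gamma)|$, and you then speculate that closing the factor of two requires exploiting correlations among the three perfect matchings $\Gamma_r, \Gamma_g, \Gamma_b$, or some parity argument. No such global argument is needed; the fix is local and is exactly the observation you brush past when you write that a single-vertex cycle ``would use that vertex's $i$-edge and $j$-edge as a loop.'' A Klein graph has no loop edges: a loop at $v$ would consume two of the three edge-ends at $v$ with a single color, leaving $v$ incident to at most two colors, contradicting the definition. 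Hence every edge has distinct endpoints, so every cycle of $\Gamma_{ij}$ that contains a vertex contains at least \emph{two} vertices. Since the vertices of $\Gamma$ are partitioned among the cycles of $\Gamma_{ij}$ (each vertex lies on exactly one), and with no knot components every cycle contains a vertex, you get $\mu(\Gamma_{ij}) \leq \tfrac12|V(\Gamma)|$ for each pair $ij$ separately, hence $2\mu(\Gamma) \leq 3|V(\Gamma)|$, which is precisely what the paper uses. Your ``equality iff every cycle is a monogon'' is the tell: monogons are impossible, and the true extremal case is the $\thetan$-curve, where each bicolored link is a bigon through both vertices, realizing $\mu = 3 = \tfrac32 \cdot 2$.
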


\begin{proof}
    Let $F$ be a totally orientable slice foam without bicolored spheres for the Klein graph $\Gamma$.  

By Lemma~\ref{lem:chiorbcounts}, 
\[4\chi^{orb}(F) = \left( \chi(F_{rb}) + \chi(F_{bg}) + \chi(F_{rg}) \right) - |V(\bdry F)|.\]
Observe that since $\chi(F_{ij})$ is the sum of the Euler characteristics of the components of $F_{ij}$ which are orientable surfaces and no component is a sphere (because $F$ has no bicolored spheres), we have $\chi(F_{ij}) \leq |\Gamma_{ij}|$ with equality realized exactly when $F_{ij}$ is a union of disks and tori.  Thus $\chi(F_{rg}) + \chi(F_{bg}) + \chi(F_{rg}) \leq \mu(\Gamma)$.  Hence  $4\chi_4^{orb}(F) \leq \mu(\Gamma) - |V(\Gamma)|$.  Therefore $4\chi_4^{orb}(\Gamma) \leq  \mu(\Gamma) - |V(\Gamma)|$.

If $\Gamma$ has no knot components, then since each component of a bicolored link of $\Gamma$ must involve
at least two vertice, we have $3|V(\Gamma)| \geq 2\mu(\Gamma)$.  Hence $\mu(\Gamma)-|V(\Gamma)| \leq \frac32|V(\Gamma)|-|V(\Gamma)| = \frac12|V(\Gamma)|$.  Thus we have $4\chi_4^{orb}(F) \leq \frac12|V(\Gamma)|$.
\end{proof}

\begin{lemma}\label{lem:noHamSeamCobord-simpler}
    Suppose $\Gamma_0$ and $\Gamma_1$ are Klein graphs each with $V$ vertices and $F$ is a totally orientable seamed foamy cobordism without bicolored spheres between them.
    Then $4 \chi^{orb}(F)  \leq  \mu(\Gamma_0) + \mu(\Gamma_1) -2V$.
    Consequently, $4\chi^{orb}_4(\Gamma_0, \Gamma_1;s) \leq \mu(\Gamma_0) + \mu(\Gamma_1) -2V$.
\end{lemma}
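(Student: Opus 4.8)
The plan is to run the argument of Lemma~\ref{lem:nonHamSliceFoam-simpler} almost verbatim, transporting the Euler-characteristic bookkeeping from slice foams to seamed foamy cobordisms. First note that a seamed foamy cobordism $F$ has no seam vertices by definition, so Lemma~\ref{lem:chiorbcounts} applies directly. Since $\bdry F = \Gamma_0 \cup -\mir{\Gamma_1}$ we have $|V(\bdry F)| = |V(\Gamma_0)| + |V(\Gamma_1)| = 2V$, so
\[
4\chi^{orb}(F) = \bigl(\chi(F_{rb}) + \chi(F_{bg}) + \chi(F_{rg})\bigr) - 2V.
\]

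Next, for each pair of colors $\{i,j\}$ I would analyze the bicolored surface $F_{ij}$. Because every seam edge and every closed seam component of $F$ is incident to facets of all three colors, each lies in $F_{ij}$, and passing to $F_{ij}$ retains only the two facets meeting along it; thus near a seam $F_{ij}$ looks like two half-disks glued along an arc, i.e.\ the seams become interior arcs and circles of an embedded surface. Likewise, inspecting the local model of Figure~\ref{fig:nbhds} at a univalent seam vertex (a vertex of $\bdry F$) shows such a vertex is an interior point of the one-manifold $\bdry F_{ij}$. Hence $F_{ij}$ is a compact surface, orientable since $F$ is totally orientable, properly embedded in $S^3 \times [0,1]$, with $\bdry F_{ij} = \Gamma_{0,ij} \sqcup \Gamma_{1,ij}$; it has $\mu(\Gamma_{0,ij}) + \mu(\Gamma_{1,ij})$ boundary circles and no sphere components, as $F$ has no bicolored spheres. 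A connected compact orientable surface with $b \geq 1$ boundary circles satisfies $\chi \leq b$, and a closed orientable surface that is not a sphere satisfies $\chi \leq 0$; summing over the components of $F_{ij}$ gives $\chi(F_{ij}) \leq \mu(\Gamma_{0,ij}) + \mu(\Gamma_{1,ij})$. Adding the three such inequalities and substituting into the displayed identity yields $4\chi^{orb}(F) \leq \mu(\Gamma_0) + \mu(\Gamma_1) - 2V$. Since $\chi_4^{orb}(\Gamma_0,\Gamma_1;s)$ is the maximum of $\chi^{orb}(G)$ over precisely the foams $G$ to which this bound applies, the stated consequence follows.

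The one step I would write out most carefully — the only one with content beyond routine bookkeeping — is the verification that $F_{ij}$ is genuinely a manifold-with-boundary whose boundary is exactly $\Gamma_{0,ij} \sqcup \Gamma_{1,ij}$: one must confirm, using the local pictures in Figure~\ref{fig:nbhds}, that taking a bicolored surface converts seam edges, seam circles, and the graph vertices of $\bdry F$ into interior points, introducing no spurious boundary circles or corners, so that the boundary-circle count is correct and the no-sphere-component hypothesis can be invoked. I expect no genuine obstacle here; everything else is identical to the proof of Lemma~\ref{lem:nonHamSliceFoam-simpler}.
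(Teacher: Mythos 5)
Your argument is correct. It differs from the paper's proof only in mechanism: the paper drills out a properly embedded arc in $S^3\times[0,1]$ joining the two boundary components and disjoint from $F$, so that the arc's exterior is $B^4$ containing $F$ as a slice foam for the split union $\Gamma_0 \sqcup -\mir{\Gamma_1}$; the bound is then quoted directly from Lemma~\ref{lem:nonHamSliceFoam-simpler}, since that split union has $2V$ vertices and component count $\mu(\Gamma_0)+\mu(\Gamma_1)$. You instead re-run the computation of Lemma~\ref{lem:nonHamSliceFoam-simpler} inside the cobordism: apply Lemma~\ref{lem:chiorbcounts} (valid because seamed foamy cobordisms have no seam vertices), note $|V(\bdry F)| = 2V$, and bound each $\chi(F_{ij})$ by its number of boundary circles using orientability and the absence of bicolored spheres. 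The two routes rest on exactly the same Euler-characteristic bookkeeping; the paper's reduction is shorter and reuses the earlier lemma as a black box, while yours is self-contained and forces you to verify (as you rightly flag) that the bicolored surfaces are honest compact surfaces with boundary exactly the two bicolored links --- a point the paper gets for free from the definition of bicolored surfaces and the arc-drilling trick. Either version is acceptable.
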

\begin{proof}
    Viewing the cobordism as $F \subset S^3 \times [0,1]$, we may choose a properly embedded arc in $S^3 \times [0,1]$ with endpoints in distinct boundary components that is disjoint from $F$.  The exterior of such an arc is $B^4$ with a proper embedding of $F$ having boundary as the split union $\Gamma_0 \sqcup -\mir{\Gamma_1}$ in $S^3$.
    From Lemma~\ref{lem:nonHamSliceFoam-simpler}, it then immediately follows that
    $4 \chi^{orb}(F)  \leq  \mu(\Gamma_0) + \mu(\Gamma_1) -2V$.
    Consequently, $4\chi^{orb}_4(\Gamma_0, \Gamma_1;s) \leq \mu(\Gamma_0) + \mu(\Gamma_1) -2V$.
\end{proof}

\subsection{Signatures}
\label{sec:sigs}

The {\em weak Euler number} $e(F)$ of a Klein foam is the sum of the relative normal Euler numbers of the bicolored surfaces, $e(F) \coloneqq e(F_{rb})+e(F_{bg})+e(F_{rg})$.  In \cite[Definition 3.10]{GR}, Gille-Robert define the signature of a 3-Hamiltonian Klein graph $\Gamma \subset S^3$ to be 
\[\sigma(\Gamma) = \sigma(W_F) + \frac12e(F)\]
where $F$ is a properly embedded spanning Klein foam for $\Gamma$ in $B^4$, $e(F)$ is its weak Euler number, $W_F$ is the 4-manifold Klein cover of $B^4$ branched over $F$, and $\sigma(W_F)$ is its signature. They then prove the following:

\begin{theorem}[{\cite[Proposition 3.14]{GR}}]\label{thm:signatureforconstituentknots}
Suppose that $\Gamma$ is a 3-Hamiltonian Klein graph with bicolored knots $\Gamma_{ij}$ where $\{i,j,k\} = \{r,g,b\}$. 
Then
\[
\sigma(\Gamma) = \sigma(\Gamma_{rb}) + \sigma(\Gamma_{bg}) + \sigma(\Gamma_{rg})
\]
\end{theorem}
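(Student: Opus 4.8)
The plan is to exploit the three index-two subgroups of the Klein group. Fix any properly embedded spanning Klein foam $F$ for $\Gamma$ in $B^4$; since $\sigma(\Gamma)=\sigma(W_F)+\tfrac12 e(F)$ does not depend on this choice \cite{GR}, while the right-hand side of the asserted identity depends on nothing, it suffices to prove the identity for this one $F$. Write $G=\Z/2\Z\times\Z/2\Z$ for the deck group of $W_F\to B^4$, classified by the homomorphism $\phi\colon\pi_1(B^4\setminus F)\to G$ sending the meridian of an $i$-colored facet to $g_i$, where $g_r,g_g,g_b$ are the three involutions of $G$ (so $g_rg_gg_b=1$, matching the seam relation). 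For a color pair $\{i,j\}$ let $H_{ij}<G$ be the order-two subgroup containing $g_k$, where $k$ is the remaining color. Then $\phi$ followed by $G\to G/H_{ij}\cong\Z/2\Z$ is precisely the double-branched-cover character that is nontrivial on exactly the meridians of the $i$- and $j$-colored facets, so the quotient $W_F/H_{ij}$ is the double cover of $B^4$ branched over the bicolored surface $F_{ij}$: that is, $W_F/H_{ij}=W_{F_{ij}}$, and $\bdry F_{ij}=\Gamma_{ij}$ is a knot because $\Gamma$ is $3$-Hamiltonian.

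Next I would run the standard equivariant signature computation. The deck action of $G$ on $W_F$ preserves orientation, so the intersection form on $H_2(W_F;\Q)$ is $G$-invariant; as every character of $G$ is $\pm1$-valued, the isotypic decomposition $H_2(W_F;\Q)=M_{\mathbf 1}\oplus M_{rb}\oplus M_{bg}\oplus M_{rg}$ into character eigenspaces is orthogonal, whence $\sigma(W_F)=\sigma(M_{\mathbf 1})+\sigma(M_{rb})+\sigma(M_{bg})+\sigma(M_{rg})$. For any subgroup $K<G$, the rational transfer for the (branched) quotient $W_F\to W_F/K$ identifies $H_2(W_F/K;\Q)$ with the $K$-invariant subspace of $H_2(W_F;\Q)$, scaling the intersection form by the positive integer $|K|$ and hence preserving signature. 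Taking $K=G$ gives $\sigma(M_{\mathbf 1})=\sigma\bigl(H_2(B^4;\Q)\bigr)=0$. Taking $K=H_{ij}$, and checking that $H_{ij}$ acts trivially on $M_{\mathbf 1}$ and on $M_{ij}$ but by $-1$ on the other two eigenspaces (the characters pulling back from $G/H_{ij}$ are exactly those indexing $M_{\mathbf 1}$ and $M_{ij}$), gives $\sigma(M_{\mathbf 1})+\sigma(M_{ij})=\sigma(W_{F_{ij}})$, so $\sigma(M_{ij})=\sigma(W_{F_{ij}})$. Summing the three pairs yields $\sigma(W_F)=\sigma(W_{F_{rb}})+\sigma(W_{F_{bg}})+\sigma(W_{F_{rg}})$.

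To finish, recall that $e(F)=e(F_{rb})+e(F_{bg})+e(F_{rg})$ by the definition of the weak Euler number, and invoke the classical fact (the four-dimensional Gordon-Litherland formula, equivalently the independence of the knot signature from the choice of spanning surface): for a knot $K$ and any properly embedded, possibly nonorientable, spanning surface $\Sigma\subset B^4$, one has $\sigma(K)=\sigma(W_\Sigma)+\tfrac12 e(\Sigma)$. Applying this with $\Sigma=F_{ij}$ and $K=\Gamma_{ij}$ and summing over the three color pairs,
\[
\sigma(\Gamma)=\sigma(W_F)+\tfrac12 e(F)=\sum_{\{i,j\}}\Bigl(\sigma(W_{F_{ij}})+\tfrac12 e(F_{ij})\Bigr)=\sigma(\Gamma_{rb})+\sigma(\Gamma_{bg})+\sigma(\Gamma_{rg}),
\]
as claimed.

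I expect the main obstacle to be the middle paragraph: making precise that the rational transfer still yields an isometry up to a positive scalar when $W_F\to W_F/K$ is a \emph{branched} cover rather than a free one, and fixing orientations consistently on $W_F$, its quotients, and $B^4$ so that the signatures literally add. Over $\Q$ this reduces to the averaging projection onto invariants together with multiplicativity of intersection numbers under a degree-$|K|$ branched cover, but it is the technical heart of the argument; the rest is bookkeeping with the characters of $G$ and with the weak Euler number.
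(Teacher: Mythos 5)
The paper does not prove this statement itself; it quotes it directly as \cite[Proposition 3.14]{GR}. Your argument is a correct reconstruction of the Gille--Robert strategy: decompose the intersection form on $H_2(W_F;\Q)$ into the four character eigenspaces of the Klein group, use the rational transfer to identify $M_{\mathbf 1}\oplus M_{ij}$ with $H_2$ of the intermediate double branched cover $W_{F_{ij}}$ (whence $\sigma(W_F)=\sum\sigma(W_{F_{ij}})$), and then apply the four-dimensional Gordon--Litherland formula to each bicolored surface --- exactly the ingredients the present paper alludes to when it adapts the first half of that proof in Theorem~\ref{thm:signatureofklein}. The only point to keep straight is the normalization of the relative normal Euler number: your knot-level formula $\sigma(K)=\sigma(W_\Sigma)+\tfrac12 e(\Sigma)$ must use the same convention for $e$ as the foam-level definition $\sigma(\Gamma)=\sigma(W_F)+\tfrac12 e(F)$, which your bookkeeping does consistently.
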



As suggested by the 2nd footnote on page 2 of \cite{GR}, their definition of  the signature of a 3-Hamiltonian Klein graph and Theorem~\ref{thm:signatureforconstituentknots} can be extended to Klein graphs that are not 3-Hamiltonian.  This echos the extension of signatures of knots to links where orientation considerations are important.  Let us review some of these details.

Let $\vv{L}$ be a link $L$ endowed with an orientation.  Given any oriented Seifert surface $F$ for $\vv{L}$, the signature $\sigma(\vv{L})$ of $\vv{L}$ is the signature of the symmetrized Seifert pairing $V+V^T$ of $F$.  Equivalently, after isotoping $F$ rel-$\bdry$ to be properly embedded in $B^4$,  $\sigma(\vv{L})=\sigma(M_F)$, the signature of the double of $B^4$ branched over $F$. 

\begin{defn}[Link nullity]
\label{defn:nullity}
The {\em nullity} of the link $\beta({L})$ is the nullity of $(V+V^T)$ plus $r-1$, where $r$ is the number of connected components of $F$, as in \cite{powell}.  Note that the nullity of a link is independent of choices of orientation on the link \cite[Theorem 2.1]{KauffmanTaylor}.  
Note that $0 \leq \beta(\vv{L}) \leq \mu(L)-1$ for any oriented link $\vv{L}$, and in particular, $\mu(L)-1 = 0$ and $\beta(L)=0$ for a knot, where $\mu(L)$ is the number of components of the link. 
While nullity is originally defined as the nullity of $(V+V^T)$ plus $r$, see \cite{Murasugi} and \cite[Remark 1.2]{KauffmanTaylor}, the version of \cite{powell} makes it additive under connected sum. 

\end{defn}

\begin{theorem}[{\cite[Theorem 2.2]{KauffmanTaylor}}]
Let $L_1$ and $L_2$ be links in $S^3$.  Then:
    \[\beta(L_1 \sqcup L_2) = \beta(L_1) + \beta(L_2) + 1 \quad \mbox{ and }\quad \beta(L_1 \# L_2) = \beta(L_1) + \beta(L_2)\]
\end{theorem}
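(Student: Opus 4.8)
The statement is quoted from Kauffman--Taylor, so strictly one just cites \cite[Theorem 2.2]{KauffmanTaylor}; here is a self-contained plan for it in the conventions of Definition~\ref{defn:nullity}.

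\smallskip

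\noindent The plan is to read $\beta$ off one conveniently chosen Seifert surface in each case, using that the Seifert form of a split or connected sum decomposes as a direct sum. Recall from Definition~\ref{defn:nullity} that for \emph{any} Seifert surface $F$ of an oriented link $\vv{L}$, with $r$ connected components and Seifert matrix $V$, one has $\beta(L)=\operatorname{null}(V+V^T)+r-1$, and that this number is a link invariant, independent both of the choice of $F$ and (by \cite[Theorem 2.1]{KauffmanTaylor}) of the chosen orientation. So for each identity it suffices to exhibit a single Seifert surface whose component count and symmetrized Seifert form we control.

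\smallskip

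\noindent\emph{Split union.} Take Seifert surfaces $F_1$ for $L_1$ and $F_2$ for $L_2$ lying in disjoint balls and set $F=F_1\sqcup F_2$, a Seifert surface for $L_1\sqcup L_2$ with $r_1+r_2$ components. Curves carried by $F_1$ and curves carried by $F_2$ have vanishing mutual linking, so in a basis of $H_1(F)=H_1(F_1)\oplus H_1(F_2)$ adapted to the splitting the Seifert matrix is the block sum $V_1\oplus V_2$, whence $\operatorname{null}(V+V^T)=\operatorname{null}(V_1+V_1^T)+\operatorname{null}(V_2+V_2^T)$. Substituting into the formula for $\beta$,
\[
\beta(L_1\sqcup L_2)=\operatorname{null}(V_1+V_1^T)+\operatorname{null}(V_2+V_2^T)+(r_1+r_2)-1=\beta(L_1)+\beta(L_2)+1,
\]
since $(r_1+r_2)-1=(r_1-1)+(r_2-1)+1$.

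\smallskip

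\noindent\emph{Connected sum.} Fix the components of $L_1$ and of $L_2$ along which the sum is formed, choose $F_1,F_2$ so those components occur as boundary circles, and let $F=F_1\natural F_2$ be the boundary connected sum along an arc joining the two circles; this is a Seifert surface for $L_1\#L_2$ with $r_1+r_2-1$ components (two pieces fuse into one), and $H_1(F)\cong H_1(F_1)\oplus H_1(F_2)$. Writing $S^3=B_1'\cup_{S}B_2'$ with $S$ the connect-sum sphere (meeting $L_1\#L_2$ in two points and $F$ in an arc), a curve $x$ on $F_1$ and its pushoff $x^{+}$ lie in the ball $B_1'$ while a curve $y$ on $F_2$ lies in $B_2'$; as $x^{+}$ bounds a Seifert surface inside $B_1'$, disjoint from $y$, we get $\operatorname{lk}(x^{+},y)=0$, and symmetrically $\operatorname{lk}(y^{+},x)=0$. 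So again $V=V_1\oplus V_2$ up to change of basis, and
\[
\beta(L_1\#L_2)=\operatorname{null}(V_1+V_1^T)+\operatorname{null}(V_2+V_2^T)+(r_1+r_2-1)-1=\beta(L_1)+\beta(L_2),
\]
the unit drop in the component count now cancelling the $+1$ seen in the split case.

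\smallskip

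\noindent The only genuinely geometric input is the splitting $V=V_1\oplus V_2$ in the connected sum case, which rests on three routine points: $F_1\natural F_2$ is a Seifert surface for $L_1\#L_2$; it carries $H_1(F_1)\oplus H_1(F_2)$; and the cross linking numbers vanish because the connect-sum sphere separates the $F_1$-curves together with their pushoffs from the $F_2$-curves. Hence I expect the component-count bookkeeping in the connected sum case (tracking that exactly two pieces merge) to be the only place where one can slip by one. If one prefers to avoid Seifert matrices, one can instead use that $\beta(L)=b_1(\Sigma_2(L);\Q)$, where $\Sigma_2(L)$ is the double cover of $S^3$ branched over $L$, and compute along the preimage of the splitting/summing sphere that $\Sigma_2(L_1\sqcup L_2)\cong\Sigma_2(L_1)\#\Sigma_2(L_2)\#(S^1\times S^2)$ and $\Sigma_2(L_1\#L_2)\cong\Sigma_2(L_1)\#\Sigma_2(L_2)$, from which both formulas follow at once.
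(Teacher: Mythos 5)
Your argument is correct. The paper itself gives no proof of this statement—it is quoted directly from Kauffman--Taylor with a citation—so there is nothing to compare against; but your Seifert-surface computation is the standard one and the bookkeeping is right in the paper's convention $\beta(L)=\operatorname{null}(V+V^T)+r-1$: the block splitting $V=V_1\oplus V_2$ holds in both cases, and the only difference between the two identities is whether the component count of the chosen surface is $r_1+r_2$ or $r_1+r_2-1$, exactly as you track. The branched-cover alternative you sketch is also valid and is arguably closer in spirit to how the paper uses nullity elsewhere (via double branched covers of bicolored links).
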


Murasugi \cite{Murasugi} defines the {\em total linking number} $\lambda(\vv{L})$ of $\vv{L}$ to be the sum of the linking numbers of all unordered pairs of components of $\vv{L}$ and then shows that the sum $\sigma(\vv{L})+\lambda(\vv{L})$ is independent of the choice of orientation on $L$.  Hence he defines the signature of the unoriented link $L$ to be $\zeta(L) = \sigma(\vv{L})+\lambda(\vv{L})$. (See also Definition 3.12 and Remark 3.13 of \cite{GR}.)  Gordon-Litherland show that if $F$ is any spanning surface for $L$, then $\zeta(L) = \sigma(M_F)+e(F)$ \cite[Corollary 5']{GordonLitherland}.  
Hence, equipping $L$ with an orientation as $\vv{L}$,
we have $\sigma(\vv{L}) = \sigma(M_F)+e(F) - \lambda(\vv{L})$ regardless of any orientation on $F$.

The following theorem is suggested  but not explicitly stated in \cite[Footnote 2]{GR}.
\begin{theorem}\label{thm:signatureofklein}
    Let $\Gamma$ be a Klein graph in $S^3$.  Let $F$ be a spanning foam for $\Gamma$ in $B^4$.
    Then 
    \begin{itemize}
        \item the integer $\zeta(\Gamma) \coloneqq \sigma(W_F) + \frac12 e(F)$ depends only on $\Gamma$, and
        \item $\zeta(\Gamma) = \zeta(\Gamma_{rb})+\zeta(\Gamma_{bg})+\zeta(\Gamma_{rg})$.
    \end{itemize}
    We say $\zeta(\Gamma)$ is the signature of $\Gamma$.
\end{theorem}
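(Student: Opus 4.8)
The plan is to reduce the statement about $\zeta(\Gamma)$ to the already-proven $3$-Hamiltonian case (Theorem~\ref{thm:signatureforconstituentknots}) together with the link-theoretic facts collected above, by exhibiting a convenient spanning foam built out of spanning (Seifert-type) surfaces for the three bicolored links. First I would address well-definedness: given two spanning foams $F$ and $F'$ for $\Gamma$ in $B^4$, I want to show $\sigma(W_F)+\tfrac12 e(F) = \sigma(W_{F'})+\tfrac12 e(F')$. The strategy here mirrors the knot/link case and the argument in \cite{GR}: glue $F$ and $F'$ along $\Gamma$ across $S^3$ to get a closed Klein foam $\widehat F$ in $S^4$, form its branched Klein cover $W_{\widehat F}$, and compute $\sigma(W_{\widehat F})$ two ways. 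On one hand additivity of signature under gluing along the branched cover of $S^3$ over $\Gamma$ (which is a $\Z/2\times\Z/2$-cover, a closed $3$-manifold) gives $\sigma(W_{\widehat F}) = \sigma(W_F) - \sigma(W_{F'})$ up to a correction term; on the other hand $\sigma(W_{\widehat F})$ is computed from the closed foam via a $G$-signature / Hirzebruch-type formula whose defect is exactly $-\tfrac12(e(F)-e(F'))$ by additivity of the weak Euler number $e$ under this gluing. Matching the two expressions yields the invariance. This is essentially the content behind \cite[Definition 3.10]{GR} extended verbatim, since nothing in that argument used the $3$-Hamiltonian hypothesis.

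Next, for the additivity formula $\zeta(\Gamma)=\zeta(\Gamma_{rb})+\zeta(\Gamma_{bg})+\zeta(\Gamma_{rg})$, I would build a specific spanning foam adapted to the bicolored decomposition. Choose spanning surfaces $S_{rb}, S_{bg}, S_{rg}$ for the three bicolored links and assemble them into a Klein foam $F$ whose bicolored surface $F_{ij}$ is (isotopic to) $S_{ij}$; concretely, near each vertex of $\Gamma$ the three incident half-edges each lie on two of the surfaces, and one arranges the surfaces to meet along seam edges running between vertices. For such an $F$ one has $e(F) = e(S_{rb})+e(S_{bg})+e(S_{rg})$ by definition of the weak Euler number, and the branched cover $W_F$ decomposes (or at least its signature decomposes additively, via a Mayer--Vietoris / Novikov-additivity argument along the codimension-one pieces over the seams) so that $\sigma(W_F) = \sigma(M_{S_{rb}}) + \sigma(M_{S_{bg}}) + \sigma(M_{S_{rg}})$, where $M_S$ is the double branched cover. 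Invoking the Gordon--Litherland identity $\zeta(L) = \sigma(M_S) + e(S)$ recalled above for each bicolored link, and the well-definedness just established, gives $\zeta(\Gamma) = \sum_{ij}\bigl(\sigma(M_{S_{ij}}) + e(S_{ij})\bigr) = \sum_{ij}\zeta(\Gamma_{ij})$.

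The main obstacle I anticipate is the signature-additivity step: justifying that $\sigma(W_F)$ splits as the sum of the three double-branched-cover signatures. The branched Klein cover $W_F$ is not literally a boundary-connected sum of the $M_{S_{ij}}$; rather, it is glued along pieces of the cover lying over neighborhoods of the seams. One must verify that these gluing regions contribute zero to the signature, i.e. that Novikov additivity applies and the separating $3$-manifolds (which are branched covers of $S^3\times I$ over a collar of the seam structure, hence built from $I$-bundles and products) have trivial signature and do not introduce a Wall non-additivity correction. Dealing correctly with the seam vertices — if one insists on a spanning foam they may be forced, or one may arrange $F$ to have none; for a general $\Gamma$ the spanning foam is the $3$-Hamiltonian-style construction and can be taken without seam vertices — requires care, as does tracking the normal Euler number contributions of the facets that are shared between two bicolored surfaces. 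A cleaner alternative, which I would fall back on, is to deduce the formula directly from Gille--Robert's Theorem~\ref{thm:signatureforconstituentknots}: stabilize $\Gamma$ by a sequence of edge connected sums with a standard planar theta-curve to make it $3$-Hamiltonian (such sums change neither side of the claimed equality, by additivity of $\zeta$ under $\#_2$ for links and the corresponding behavior of foams), then apply Theorem~\ref{thm:signatureforconstituentknots} on the nose and undo the stabilizations. That route replaces the Mayer--Vietoris computation with a bookkeeping check that edge connected sum is compatible with the bicolored decomposition and with $e(F)$ and $\sigma(W_F)$, which is the path I would write up if the direct decomposition proves fiddly.
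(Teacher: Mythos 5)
Your treatment of well-definedness is consistent with what the paper does (it simply cites the proof of \cite[Theorem 3.11]{GR}, which is the glue-and-compare-in-$S^4$ argument you sketch, and notes the $3$-Hamiltonian hypothesis is not used there). The problem is in your second step. The identity $\sigma(W_F)=\sigma(M_{F_{rb}})+\sigma(M_{F_{bg}})+\sigma(M_{F_{rg}})$ is \emph{not} obtained by Novikov additivity along $3$-manifolds lying over the seams: the double branched covers $M_{F_{jk}}$ are the intermediate \emph{quotients} $W_F/\langle i\rangle$ of the Klein cover, not pieces into which $W_F$ decomposes by cutting. There is no codimension-one decomposition of $W_F$ whose pieces are the three $M_{F_{ij}}$ — each $M_{F_{ij}}$ is a double cover of all of $B^4$, not of a neighborhood of part of $F$. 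The correct mechanism (the one in the first half of \cite[Proposition 3.14]{GR}, which the paper invokes) is the isotypic decomposition of $H_2(W_F;\Q)$ under the $\Z/2\times\Z/2$ deck action: each nontrivial character $\chi_i$ contributes the signature of $W_F/\langle i\rangle\cong M_{F_{jk}}$, and the trivial character contributes $\sigma(B^4)=0$. Your anticipated "main obstacle" is real, and the cut-and-paste route you propose to overcome it would not succeed. Relatedly, you do not need (and should not try) to build a foam with prescribed bicolored surfaces $S_{ij}$ — three arbitrary spanning surfaces need not assemble into a foam; instead take any spanning foam $F$ and use its bicolored surfaces $F_{ij}$ as the spanning surfaces in the Gordon--Litherland formula $\zeta(\Gamma_{ij})=\sigma(M_{F_{ij}})+e(F_{ij})$, which is exactly how the paper proceeds.

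Your fallback route also fails. An edge connected sum along an $i$-colored edge with a planar theta-curve leaves $\Gamma_{ij}$ and $\Gamma_{ik}$ unchanged up to connected sum with an unknot, but replaces $\Gamma_{jk}$ by a \emph{split union} with an unknot (see Lemma~\ref{lem:invts_of_sums}: $\mu$ drops by only $2$ under $\#_2$, and $\beta$ increases by $1$). So edge sums with trivial thetas can only increase the number of components of a bicolored link; no sequence of them can merge the components of a multi-component $\Gamma_{jk}$ into a knot, so you cannot stabilize an arbitrary Klein graph to a $3$-Hamiltonian one this way, and Theorem~\ref{thm:signatureforconstituentknots} cannot be applied "on the nose" after such stabilization.
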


\begin{proof}
    The first part, that the sum $\sigma(W_F) + \frac12 e(F)$ is independent of the choice of spanning foam $F$ for $\Gamma$,  follows exactly as in the proof of \cite[Theorem 3.11]{GR} (skipping the middle part). There \cite[Proposition 3.7]{GR} continues to apply because $W=B^4$ in this situation.  

    The second part follows the first half of the proof of \cite[Theorem 3.14]{GR} where \cite[Theorem 2]{GordonLitherland} extends directly to links as discussed in \cite[\S5]{GordonLitherland} while \cite[Theorem 5']{GordonLitherland} is used in the place of \cite[Theorem 5]{GordonLitherland}.
\end{proof}

\begin{defn}[Total linking number, and nullity]\label{defn:sigTO}
    Let $\vv{\Gamma}$ be a totally oriented Klein graph.  Then the {\em total linking number} of $\vv{\Gamma}$ is the sum of the total linking numbers of its bicolored links. 
 That is, 
 \[ \lambda(\vv{\Gamma})=\lambda(\vv{\Gamma}_{rb})+\lambda(\vv{\Gamma}_{bg})+\lambda(\vv{\Gamma}_{rg}).
 \]
    In accordance with the relation $\zeta(L) = \sigma(\vv{L})+\lambda(\vv{L})$ between the Murasugi signature of an unoriented link $L$ and the signature and total linking number, the {\em signature} of $\vv{\Gamma}$ is then  $\sigma(\vv{\Gamma}) = \zeta(\Gamma)-\lambda(\vv{\Gamma})$.  
Similarly, 
define the {\em nullity} $\beta({\Gamma}) = \beta({\Gamma}_{rb}) + \beta({\Gamma}_{bg}) + \beta({\Gamma}_{rg})$ to be the sum of the nullities of the bicolored links.

It follows immediately from these definitions that the operations of mirroring and total orientation reversal of a totally oriented Klein graph leave $\mu$ the component count and $\beta$ unchanged while $\lambda$ is negated by mirroring but unchanged by total orientation reversal. This is recorded in the following lemma.
\end{defn}

\begin{lemma}\label{lem:invts_of_rev_mir}
    Let $\vv{\Gamma}$ be a totally oriented Klein graph.  Then
    \[
    \begin{array}{rclcll}
\lambda(-\vv{\Gamma}) &=&\lambda(\vv{\Gamma}),&&& \text{ and}\\
\lambda(\mir(\vv{\Gamma}))&=&-\lambda(\vv{\Gamma}),&&& \text{ while}\\
\mu(-\vv{\Gamma})&=&\mu(\mir(\vv{\Gamma}))&=&\mu(\vv{\Gamma}),& \text{ and }\\
\beta(-\vv{\Gamma})&=&\beta(\mir(\vv{\Gamma}))&=&\beta(\vv{\Gamma}).&\\

    \end{array}
    \]
\end{lemma}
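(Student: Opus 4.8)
\textbf{Proof proposal for Lemma~\ref{lem:invts_of_rev_mir}.}

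The plan is to reduce everything to the corresponding well-known facts for oriented links, applied separately to each of the three bicolored links $\vv{\Gamma}_{rb}$, $\vv{\Gamma}_{bg}$, $\vv{\Gamma}_{rg}$, and then to sum. By Definition~\ref{defn:sigTO}, each of the four quantities in the statement is, by construction, the sum of the corresponding link invariants over the three bicolored links. So it suffices to establish: (i) $\lambda$ of an oriented link is unchanged by reversing the orientation of every component simultaneously, and negated by mirroring; (ii) $\mu$ (component count) is unchanged by both operations; (iii) $\beta$ (nullity) is unchanged by both operations.

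First I would treat $\mu$: the number of components of a link is a purely topological quantity, visibly insensitive to orientation and to the ambient reflection, so $\mu(-\vv{\Gamma}_{ij}) = \mu(\mir(\vv{\Gamma}_{ij})) = \mu(\vv{\Gamma}_{ij})$ for each pair, and summing over the three pairs gives the middle line. Next, $\beta$: the nullity of a link is independent of the choice of orientation on the link by \cite[Theorem 2.1]{KauffmanTaylor} (cited in Definition~\ref{defn:nullity}), which immediately gives $\beta(-\vv{\Gamma}_{ij}) = \beta(\vv{\Gamma}_{ij})$. For mirroring, one observes that if $F$ is a Seifert surface for $\vv{L}$ with Seifert form $V$, then the mirror surface $\mir F$ is a Seifert surface for $\mir{\vv{L}}$ with Seifert form $-V^T$ (up to a change of basis), so its symmetrized form $-(V+V^T)$ has the same nullity and the same number of surface components; hence $\beta(\mir(\vv{\Gamma}_{ij})) = \beta(\vv{\Gamma}_{ij})$. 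Summing over pairs yields the last line.

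For $\lambda$: writing $\lambda(\vv{L}) = \sum_{a<b} \lk(L_a, L_b)$, reversing the orientation of all components negates each factor in each $\lk(L_a,L_b)$ twice, so every pairwise linking number is unchanged, giving $\lambda(-\vv{\Gamma}_{ij}) = \lambda(\vv{\Gamma}_{ij})$. Under mirroring, each pairwise linking number is negated, so $\lambda(\mir(\vv{\Gamma}_{ij})) = -\lambda(\vv{\Gamma}_{ij})$. Summing over the three bicolored links gives the first two lines of the display. Assembling the four cases completes the proof.

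I do not anticipate a genuine obstacle here — the lemma is a bookkeeping consequence of standard link-invariant behavior — but the one point requiring a moment of care is the mirroring behavior of the nullity, since nullity is defined via a Seifert surface (Definition~\ref{defn:nullity}) rather than intrinsically; the claim to verify is simply that $\mir F$ is a legitimate Seifert surface for $\mir{\vv{L}}$ whose symmetrized Seifert form is congruent to $-(V+V^T)$, which has the same rank and nullity and whose surface has the same component count $r$, so the $r-1$ correction term is also unchanged. (One must also keep straight that here mirroring is applied to the Klein graph $\vv{\Gamma}$ and inherited by each bicolored link $\vv{\Gamma}_{ij}$, which is immediate from the definition of $\mir(\Gamma)$.)
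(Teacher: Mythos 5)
Your proposal is correct and matches the paper's (implicit) argument: the paper simply records this lemma as following immediately from the additive definitions of $\lambda$, $\mu$, and $\beta$ over the three bicolored links together with the standard behavior of linking number, component count, and nullity under reversal and mirroring. Your write-up just makes those standard link-level facts explicit, including the correct observation that the mirror's symmetrized Seifert form is $-(V+V^T)$, so nothing further is needed.
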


\begin{lemma}\label{lem:invts_of_sums}
Let $\Gamma_1$ and $\Gamma_2$ be Klein graphs.
Then we have
\[\beta({\Gamma_1} \#_2 {\Gamma_2}) =  \beta({\Gamma_1}) + \beta({\Gamma_2})+1 \quad \mbox{ and }\quad \beta({\Gamma_1} \#_3 {\Gamma_2}) =  \beta({\Gamma_1}) + \beta({\Gamma_2}) \]
as well as 
\[\mu({\Gamma_1} \#_2 {\Gamma_2}) =  \mu({\Gamma_1}) + \mu({\Gamma_2})-2 \quad \mbox{ and }\quad \mu({\Gamma_1} \#_3 {\Gamma_2}) =  \mu({\Gamma_1}) + \mu({\Gamma_2})-3 \]
\end{lemma}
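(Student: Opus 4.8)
The plan is to reduce both identities, for both connected sum operations, to the level of the three bicolored links, where the analogous additivity statements have already been recorded: the Kauffman–Taylor additivity of nullity quoted above ($\beta(L_1\sqcup L_2)=\beta(L_1)+\beta(L_2)+1$ and $\beta(L_1\# L_2)=\beta(L_1)+\beta(L_2)$) and the elementary link facts $\mu(L_1\sqcup L_2)=\mu(L_1)+\mu(L_2)$ and $\mu(L_1\# L_2)=\mu(L_1)+\mu(L_2)-1$. Since $\beta(\Gamma)=\beta(\Gamma_{rb})+\beta(\Gamma_{bg})+\beta(\Gamma_{rg})$ and $\mu(\Gamma)=\mu(\Gamma_{rb})+\mu(\Gamma_{bg})+\mu(\Gamma_{rg})$ by definition, it suffices to identify, for each connected sum, how each of its three bicolored links is built from the corresponding bicolored links of $\Gamma_1$ and $\Gamma_2$, and then add up three copies of the link-level formulas.

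For the edge sum $\Gamma_1\#_2\Gamma_2$, performed along a pair of $i$-colored edges: the summing ball $B_t\subset S^3$ ($t=1,2$) meets $\Gamma_t$ only in a trivial arc of the chosen $i$-edge. Hence $B_t$ is disjoint from $\Gamma_{t,jk}$, while it meets each of $\Gamma_{t,ij}$ and $\Gamma_{t,ik}$ in a trivial arc lying on the component through the chosen point. Matching the chosen points therefore realizes $(\Gamma_1\#_2\Gamma_2)_{ij}$ and $(\Gamma_1\#_2\Gamma_2)_{ik}$ as link connected sums $\Gamma_{1,ij}\#\Gamma_{2,ij}$ and $\Gamma_{1,ik}\#\Gamma_{2,ik}$ along the indicated components, whereas the gluing sphere separates $\Gamma_{1,jk}$ from $\Gamma_{2,jk}$, so $(\Gamma_1\#_2\Gamma_2)_{jk}=\Gamma_{1,jk}\sqcup\Gamma_{2,jk}$ is a split union. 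Summing the three link identities gives $\beta(\Gamma_1\#_2\Gamma_2)=\beta(\Gamma_1)+\beta(\Gamma_2)+1$ and $\mu(\Gamma_1\#_2\Gamma_2)=\mu(\Gamma_1)+\mu(\Gamma_2)-2$, the $+1$ and the $-2$ both coming from the single split union among the three bicolored links.

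For the vertex sum $\Gamma_1\#_3\Gamma_2$: the summing ball $B_t$ meets $\Gamma_t$ in three sub-arcs emanating from the chosen vertex, one of each color; after smoothing at the vertex, $B_t$ meets each bicolored link $\Gamma_{t,ij}$ in a single trivial arc on the component through that vertex. Matching the three boundary points by color realizes each of the three bicolored links of $\Gamma_1\#_3\Gamma_2$ as the link connected sum $\Gamma_{1,ij}\#\Gamma_{2,ij}$. Adding three copies of $\beta(L_1\# L_2)=\beta(L_1)+\beta(L_2)$ gives $\beta(\Gamma_1\#_3\Gamma_2)=\beta(\Gamma_1)+\beta(\Gamma_2)$, and adding three copies of $\mu(L_1\# L_2)=\mu(L_1)+\mu(L_2)-1$ gives $\mu(\Gamma_1\#_3\Gamma_2)=\mu(\Gamma_1)+\mu(\Gamma_2)-3$.

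The only point that warrants a careful sentence rather than a one-liner is the identification of how the bicolored links transform: one must use the Klein coloring to see that in the edge sum the $jk$-link is genuinely split (the summing ball avoids $\Gamma_{t,jk}$ entirely), and that in both sums the remaining bicolored links are honest link connected sums along well-defined components rather than more complicated amalgams; the smoothing-at-a-vertex convention for bicolored links is what makes the vertex-sum case come out as three simultaneous link connected sums. Once this is pinned down the rest is bookkeeping, and the independence of $\beta(L_1\# L_2)$ and $\mu(L_1\# L_2)$ from the choice of summing components is already built into the statements being invoked, so no further care is needed there.
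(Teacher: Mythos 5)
Your proof is correct and follows essentially the same route as the paper's: reduce to the three bicolored links, observe that an edge sum turns the $jk$-link into a split union and the other two into link connected sums while a vertex sum turns all three into link connected sums, and then add up the quoted link-level formulas for $\beta$ and $\mu$. The paper's version is just a terser statement of the same bookkeeping.
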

\begin{proof}
    The statement for the nullity $\beta$ follows from the fact that, as defined, the nullity of a Klein graph is the sum of the nullities of its bicolored links.  Note that an order $2$ sum, say along a red edge, causes a split union of the two blue-green links. The statement for the component count $\mu$ is a straightforward count minding how many bicolored link components are merged under a sum.
\end{proof}

 \begin{remark}
 Observe that for a $3$-Hamiltonian graph $\vv{\Gamma}$ we have $\lambda(\vv{\Gamma})=0$,  $\beta(\vv{\Gamma})=0$, and $\mu(\vv{\Gamma}) = 3$.  Moreover, $\sigma(\vv{\Gamma})$ is independent of the total orientation.   
 \end{remark}   

Armed with Definition~\ref{defn:sigTO}, Theorem~\ref{thm:signatureofklein} immediately allows for the calculation of the signature of a totally oriented Klein graph as a sum of the signatures of its oriented bicolored links.

\begin{cor}\label{cor:signatureofTOKline}
    Let $\vv{\Gamma}$ be a totally oriented Klein graph.  
    Then 
    \[\sigma(\vv{\Gamma}) = \sigma(\vv{\Gamma}_{rb})+\sigma(\vv{\Gamma}_{bg})+\sigma(\vv{\Gamma}_{rg}).\] 
\end{cor}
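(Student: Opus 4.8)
The plan is to deduce Corollary~\ref{cor:signatureofTOKline} directly from Theorem~\ref{thm:signatureofklein} together with the defining relations for $\sigma$ on both sides, reducing everything to the familiar link-level identity $\zeta(L) = \sigma(\vv{L}) + \lambda(\vv{L})$ applied one color-pair at a time. First I would fix a spanning foam $F$ for $\Gamma$ in $B^4$, so that by Theorem~\ref{thm:signatureofklein} we have $\zeta(\Gamma) = \sigma(W_F) + \tfrac12 e(F) = \zeta(\Gamma_{rb}) + \zeta(\Gamma_{bg}) + \zeta(\Gamma_{rg})$, where each $\zeta(\Gamma_{ij})$ is the Murasugi (unoriented) signature of the bicolored link $\Gamma_{ij}$. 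Then, invoking Definition~\ref{defn:sigTO}, the signature of the totally oriented graph is $\sigma(\vv{\Gamma}) = \zeta(\Gamma) - \lambda(\vv{\Gamma})$, and by definition $\lambda(\vv{\Gamma}) = \lambda(\vv{\Gamma}_{rb}) + \lambda(\vv{\Gamma}_{bg}) + \lambda(\vv{\Gamma}_{rg})$.

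The key step is then purely formal bookkeeping: substitute the two sums into $\sigma(\vv{\Gamma}) = \zeta(\Gamma) - \lambda(\vv{\Gamma})$, regroup the six terms by color pair, and recognize each grouped pair $\zeta(\Gamma_{ij}) - \lambda(\vv{\Gamma}_{ij})$ as exactly $\sigma(\vv{\Gamma}_{ij})$, using Murasugi's relation $\zeta(L) = \sigma(\vv{L}) + \lambda(\vv{L})$ for the link $L = \Gamma_{ij}$ with its chosen orientation from the total orientation of $\vv{\Gamma}$. This yields
\[
\sigma(\vv{\Gamma}) = \sum_{\{i,j\}} \bigl(\zeta(\Gamma_{ij}) - \lambda(\vv{\Gamma}_{ij})\bigr) = \sum_{\{i,j\}} \sigma(\vv{\Gamma}_{ij}) = \sigma(\vv{\Gamma}_{rb}) + \sigma(\vv{\Gamma}_{bg}) + \sigma(\vv{\Gamma}_{rg}),
\]
as desired.

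There is essentially no obstacle here; the corollary is an immediate algebraic consequence of the two ingredients, which is presumably why it is stated as a corollary. The only point worth a sentence of care is consistency of orientations: the total orientation on $\vv{\Gamma}$ restricts to a genuine orientation on each bicolored link $\Gamma_{ij}$ (this is precisely what Definition~\ref{def:total orientation definition graph} encodes), so each $\sigma(\vv{\Gamma}_{ij})$ and $\lambda(\vv{\Gamma}_{ij})$ is well-defined with respect to that induced orientation, and Murasugi's relation applies verbatim. One could also note that this is consistent with Theorem~\ref{thm:signatureforconstituentknots} in the $3$-Hamiltonian case, where $\lambda(\vv{\Gamma}) = 0$ and each $\zeta(\Gamma_{ij}) = \sigma(\Gamma_{ij})$ is just the knot signature, so the new formula specializes correctly.
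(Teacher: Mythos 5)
Your proposal is correct and matches the paper's proof essentially verbatim: both arguments expand $\sigma(\vv{\Gamma}) = \zeta(\Gamma) - \lambda(\vv{\Gamma})$ using Theorem~\ref{thm:signatureofklein} and the definition of $\lambda(\vv{\Gamma})$, then regroup by color pair and apply Murasugi's relation $\zeta(L) = \sigma(\vv{L}) + \lambda(\vv{L})$ to each bicolored link. No gaps; the remark about orientations restricting consistently is a nice touch but not strictly needed.
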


\begin{proof}
    \begin{align*}
        \sigma(\vv{\Gamma}) &= \zeta(\Gamma)-\lambda(\vv{\Gamma}) \\
         &= \left(  \zeta(\Gamma_{rb})+\zeta(\Gamma_{bg})+\zeta(\Gamma_{rg}) \right) - \left( \lambda(\vv{\Gamma}_{rb})+\lambda(\vv{\Gamma}_{bg})+\lambda(\vv{\Gamma}_{rg})  \right) \\
         &= \left(  \zeta(\Gamma_{rb})- \lambda(\vv{\Gamma}_{rb})  \right) + \left(  \zeta(\Gamma_{bg})- \lambda(\vv{\Gamma}_{bg}) \right) + \left(  \zeta(\Gamma_{rg})- \lambda(\vv{\Gamma}_{rg}) \right)  \\
         &= \sigma(\vv{\Gamma}_{rb})+\sigma(\vv{\Gamma}_{bg})+\sigma(\vv{\Gamma}_{rg})
    \end{align*}\end{proof}

\begin{remark}
In \cite[Proposition 3.14]{GR} (which we have only partially stated in Theorem~\ref{thm:signatureforconstituentknots}) the hypothesis that the Klein graph $\Gamma$ is 3-Hamiltonian is indeed used for the {\em strong signature} $\tilde{\sigma}(\Gamma)$; see Section~\ref{subsec:strong sig}.  Their definition of this strong signature uses that the double branched cover of each bicolored knot $\Gamma_{ij}$ is a rational homology sphere to ensure that the lift of the third color of edges $\Gamma_k$ is a link of rationally null-homologous knots.  The double branched cover of a knot is necessarily a rational homology sphere while the double branched cover of a link of more than one component might not be.
\end{remark}

\section{Total orientations of Klein graphs and seam vertices}
\label{sec:totalorientations}

\subsection{Foams with seam vertices}
\label{subsec:foams with seams}

For any spatial Klein graph $\Gamma$ in $S^3$, Gille-Robert show how to construct a spanning foam (ie.\ a Klein foam without seam vertices) in $B^4$ whose boundary is $\Gamma$ \cite[Proposition 2.4]{GR}.  However, some Klein graphs  do not bound any totally orientable slice foam as we now observe.

Let $\Gamma_{tet} \subset S^3$ be the (planar) tetrahedral Klein graph shown in Figure~\ref{fig:seamvertex}. 
Define $F_{vtx}$ to be the Klein foam that arises from the standard neighborhood $(B^4, F_{vtx})$ of a seam vertex of a Klein foam, a cone on $(S^3, \Gamma_{tet})$. 
Also note that $\Gamma_{tet}$ is $3$-Hamiltonian.

\begin{figure}
    \centering
    \includegraphics[width=.6\textwidth]{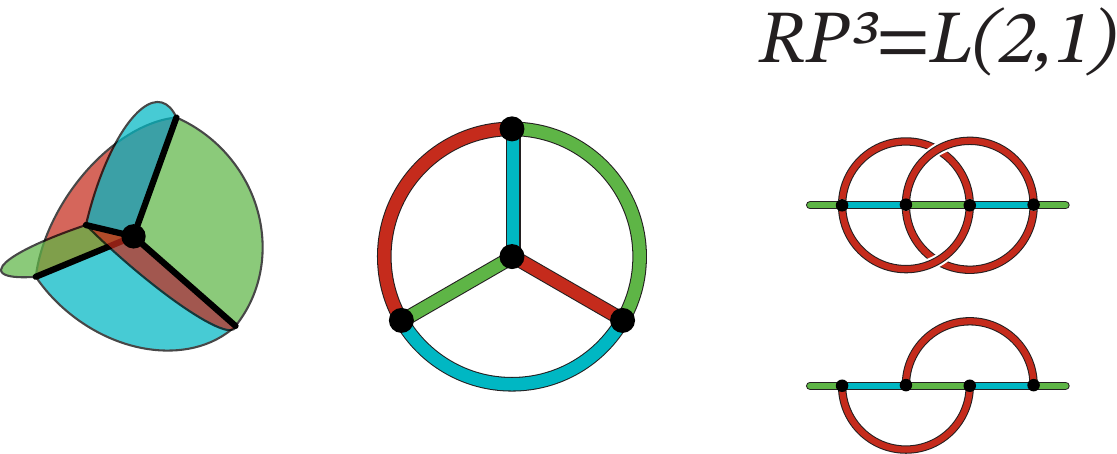}
    \caption{(Left) A neighborhood of a seam vertex. (Center) The planar (trivial) Klein tetrahedral graph is the boundary of a regular neighborhood of a seam vertex. (Right) The Klein cover of tetrahedral graph.}
    \label{fig:seamvertex}
\end{figure}

\begin{lemma}\label{lem:notoslicefoam}
    While $F_{vtx}$ is totally orientable,
    $\Gamma_{tet}= \bdry F_{vtx}$ does not bound a totally orientable Klein foam without seam vertices.
\end{lemma}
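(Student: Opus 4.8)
The first statement is immediate: writing $F_{vtx}$ as the cone from the centre of $B^4$ on $(S^3,\Gamma_{tet})$, each bicolored surface $(F_{vtx})_{ij}$ is the cone on the bicolored link $(\Gamma_{tet})_{ij}$, which is a single circle since $\Gamma_{tet}$ is $3$-Hamiltonian, so $(F_{vtx})_{ij}$ is a disk and in particular orientable; hence $F_{vtx}$ is totally orientable. For the second statement the plan is to argue by contradiction: assume $F$ is any Klein foam without seam vertices with $\bdry F=\Gamma_{tet}$, and exhibit a bicolored surface of $F$ that is non-orientable. Only the intrinsic structure of $F$ will be used; the embedding in $B^4$ plays no role.

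Since $F$ has no seam vertices, $s(F)$ is a compact $1$-manifold whose boundary is the four-element vertex set $V(\Gamma_{tet})$; thus $s(F)$ consists of two seam arcs $\alpha,\beta$ together with (irrelevant) seam circles. The endpoints of $\alpha$ and of $\beta$ partition $V(\Gamma_{tet})$ into two pairs, and since $\Gamma_{tet}\cong K_4$ every pair of vertices spans an edge, so this partition is a perfect matching of $K_4$; as the three perfect matchings of $K_4$ are exactly its three Tait color classes, the partition coincides with one of them. Relabeling, I may assume it is the red class, so $\alpha$ joins the endpoints $v_1,v_2$ of the red edge $e_{12}$ and $\beta$ joins the endpoints $v_3,v_4$ of the red edge $e_{34}$. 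I would then trace the boundary of the blue facet $B$ incident to the blue edge $e_{13}$: at each vertex the blue facet is locally bounded by the blue edge and the seam arc emanating there (Figure~\ref{fig:nbhds}), and along $\bdry B$ blue edges and seam arcs alternate, which forces $\bdry B$ to be the single circle with cyclic word $e_{13}\,\alpha\,e_{24}\,\beta$; in particular $B$ is the unique blue facet incident to both $\alpha$ and $\beta$. Symmetrically, the green facet $G$ incident to $e_{14}$ has $\bdry G=e_{14}\,\alpha\,e_{23}\,\beta$ and is the unique green facet incident to $\alpha$ and to $\beta$.

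To finish, consider the blue--green bicolored surface $F_{bg}$. The arcs $\alpha$ and $\beta$ lie in its interior, and along each of them $F_{bg}$ is the union of the facets $B$ and $G$; moreover $B\cup G$ is connected and lies in the component of $F_{bg}$ meeting $\bdry F_{bg}=\Gamma_{bg}$. Hence any orientation of $F_{bg}$ restricts to orientations of $B$ and of $G$ inducing opposite orientations of $\alpha$ and opposite orientations of $\beta$. Reading from the cyclic words $e_{13}\,\alpha\,e_{24}\,\beta$ and $e_{14}\,\alpha\,e_{23}\,\beta$ the directions in which each of the two orientations of $B$, respectively of $G$, runs along $\alpha$ and along $\beta$, one checks that matching up orientations along $\alpha$ forces one of the two relative orientations of $B$ and $G$, whereas matching along $\beta$ forces the other; no orientation of $F_{bg}$ can do both, so $F_{bg}$ is non-orientable and $F$ is not totally orientable. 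The substantive steps are the boundary-tracing and this last incompatibility check; both are elementary, but the former must be carried out carefully from the local models. The conceptual key that makes everything go through is the observation that the two seam arcs necessarily realize one of the three color matchings of $K_4$, which pins down the combinatorics of the facets adjacent to the seams.
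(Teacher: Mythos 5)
Your proof is correct and follows essentially the same route as the paper's: the two seam arcs must pair the vertices as one of the three Tait color classes (taken to be red), the blue and green facets adjacent to the seams each have a single boundary circle alternating graph edges with the two seam arcs, and this forces an orientation clash on $F_{bg}$. The only cosmetic difference is that you derive the contradiction by comparing the orientations of $B$ and $G$ along the two interior seam arcs, whereas the paper compares the boundary orientation of $F_b$ with the one induced on the blue edges by $\bdry F_{bg}$; these are the same obstruction.
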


\begin{proof}
    Suppose $F$ is a totally orientable Klein foam for $\Gamma_{tet}$ without seam vertices. 
    Since $\Gamma_{tet}$ has just four vertices, $F$ has exactly two seam edges.  By symmetry, we may assume each seam edge has the same endpoints as a red edge. 
    Then the two blue edges and two seam edges form a single cycle as do the two green edges and two seam edges.  Thus the facets $F_b$ and $F_g$ are each connected with connected boundary.  

    Since $F$ is totally orientable, the surface $F_{bg}$ is orientable.   Choose an orientation of $F_{bg}$. This induces an orientation on each $F_b$ and $F_g$.  Moreover, the $bg$-orientations on the blue and green edges of $\Gamma_{tet}$ arising from $\bdry F_{bg}$ are also induced from the boundary orientations on $\bdry F_b$ and $\bdry F_g$ 
    However, with these $bg$-orientations, one seam edge connects the tips of the blue edges while the other connects their tails.  Yet this is inconsistent with any orientation induced from $\bdry F_b$.
\end{proof}

\begin{remark}\label{rem:TOspanningfoamsforonlysomeTOs}
    The Klein graph $\Gamma_{tet} \#_2 \Gamma_{tet}$ bounds a totally orientable slice foam only for certain total orientations while other total orientations only bound totally oriented foams with at least two seam vertices.
\end{remark}

\begin{lemma}\label{lem:notoseamedfoamycobord}
    Let $\Gamma_{2\thetan} = \Gamma_\thetan \#_2 \Gamma_\theta$ be the edge connected sum of two trivial theta curves.  There is no totally orientable seamed foamy cobordism from $\Gamma_{tet}$ to $\Gamma_{2\thetan}$, even though they have the same number of vertices.
\end{lemma}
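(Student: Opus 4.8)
The plan is to compare the combinatorics of the seam set of a hypothetical seamed foamy cobordism $F$ from $\Gamma_{tet}$ to $\Gamma_{2\thetan}$ and derive a contradiction from the structure of its bicolored surfaces, much as in the proof of Lemma~\ref{lem:notoslicefoam}. Both graphs have four vertices, so $s(F)$ is a $1$-manifold with eight univalent endpoints, four on each of $\Gamma_{tet}$ and $\Gamma_{2\thetan}$, plus possibly some closed components. By the definition of a seamed foamy cobordism, every edge of $s(F)$ runs from one boundary component to the other; hence there are exactly four seam edges, inducing a bijection between the vertices of $\Gamma_{tet}$ and those of $\Gamma_{2\thetan}$. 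The key point is that a seam edge colored by the pair $\{i,j\}$ (meaning it is incident to the $k$-facet, $\{i,j,k\}=\{r,g,b\}$) must, under this bijection, match a vertex of $\Gamma_{tet}$ with a vertex of $\Gamma_{2\thetan}$ whose incident $k$-edge at that vertex is the one not meeting any seam — and I would track which color of edge at each vertex is "absorbed" into which facet versus which runs along a seam.

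First I would analyze $\Gamma_{2\thetan} = \Gamma_\thetan \#_2 \Gamma_\theta$: it has two vertices $v_1, v_2$ on each trivial theta-curve summand, and the edge connected sum is performed along an edge of some common color, say red; thus the three edges at $v_1$ (and at $v_2$) have colors $r,g,b$, and each of the bicolored links $(\Gamma_{2\thetan})_{rg}$, $(\Gamma_{2\thetan})_{rb}$ is a two-component unlink while $(\Gamma_{2\thetan})_{gb}$ is a split union of two unknots, so $\mu(\Gamma_{2\thetan}) = 6$ whereas $\mu(\Gamma_{tet}) = 3$. Then I would examine the bicolored surface $F_{gb}$: its boundary is $(\Gamma_{tet})_{gb} \sqcup -(\Gamma_{2\thetan})_{gb}$ together with the seam edges colored $\{g,b\}$ (i.e. incident to the red facet). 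The two seam edges of $\Gamma_{tet}$ both have the same endpoints as red edges of $\Gamma_{tet}$ (using the symmetry of $\Gamma_{tet}$ as in Lemma~\ref{lem:notoslicefoam}), so the red facet $F_r$ — which is connected because in $\Gamma_{tet}$ the red edges together with the seams that run off them form part of a connected $1$-complex — forces $F_b$ and $F_g$ to each be connected with connected boundary on the $\Gamma_{tet}$ side; meanwhile on the $\Gamma_{2\thetan}$ side $F_b$ and $F_g$ must supply the four univalent seam endpoints and close up the unlink/split components.

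The main obstacle, which is also the crux of the argument, is the orientability obstruction, and I expect it to run parallel to Lemma~\ref{lem:notoslicefoam}: choose an orientation of the orientable surface $F_{gb}$, which induces compatible boundary orientations on $\partial F_g$ and $\partial F_b$ and hence $gb$-orientations on all green and blue edges of both boundary graphs, coherent along the shared seam edges. On the $\Gamma_{tet}$ side, exactly as in the earlier lemma, one of the two $\{g,b\}$-seam edges must join the heads of the two blue edges while the other joins their tails, which is incompatible with orienting the connected surface $F_b$; but now $F_b$ is not necessarily connected, so I must use the cobordism structure to rule out splitting it. Here I would argue that any green or blue facet having boundary only on the $\Gamma_{2\thetan}$ side (and meeting no seam) would be a closed-off disk/annulus that can be discarded, while any facet meeting a seam must, because each seam runs from $\Gamma_{tet}$ to $\Gamma_{2\thetan}$, connect back to the $\Gamma_{tet}$ side; combined with the connectivity forced by $\Gamma_{tet}$'s two seams sitting on red edges, this collapses $F_b$ (resp. $F_g$) into a single component with the same inconsistent boundary orientation as before, giving the contradiction. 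Alternatively — and perhaps more cleanly — I would invoke Lemma~\ref{lem:noHamSeamCobord-simpler} style bookkeeping together with a parity/linking argument on the $\Z_2\times\Z_2$ branched cover, but the direct orientation-tracking on $F_{gb}$ mirroring Lemma~\ref{lem:notoslicefoam} seems the most robust route, with the case analysis on how the four seams distribute among the four vertex-pairs being the only genuinely fiddly part.
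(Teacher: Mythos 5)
There is a genuine gap. You correctly note at the outset that in a \emph{seamed} foamy cobordism every seam edge runs from $\Gamma_{tet}$ to $\Gamma_{2\thetan}$, so the hypothetical cobordism has exactly four seam edges inducing a bijection between the two vertex sets. But the contradiction you then try to import from Lemma~\ref{lem:notoslicefoam} --- ``one of the two $\{g,b\}$-seam edges must join the heads of the two blue edges while the other joins their tails'' --- lives in the slice-foam setting, where the two seam edges have \emph{both} endpoints on $\Gamma_{tet}$ and pair up its vertices. In the cobordism there are no such edges: no seam edge has two endpoints on $\Gamma_{tet}$, so the blue facet incident to $\Gamma_{tet}$ is not forced to be an annulus-like piece whose boundary traverses blue edges and seams alternately, and the head/tail incompatibility never materializes. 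Your proposal states the correct seam structure in its first paragraph and then silently reverts to the slice-foam seam structure in the third; the ``only genuinely fiddly part'' you defer (how the four seams distribute among vertex pairs, and why $F_b$ cannot split) is in fact where the argument would have to be built from scratch, and the fallback suggestion of ``Lemma~\ref{lem:noHamSeamCobord-simpler} style bookkeeping'' cannot work since that lemma gives only an Euler-characteristic inequality, which such a cobordism could easily satisfy.

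The paper avoids all of this by a capping argument: $\Gamma_{2\thetan}$ bounds an explicit totally orientable slice foam $H$ (an interval times an H-shaped planar graph, pushed into $B^4$) with the property that \emph{every} total orientation of $\Gamma_{2\thetan}$ is induced by some total orientation of $H$. Gluing $H$ to the hypothetical cobordism $G$ along $\Gamma_{2\thetan}$ produces a totally orientable slice foam for $\Gamma_{tet}$ without seam vertices, directly contradicting Lemma~\ref{lem:notoslicefoam}. If you want to keep a direct approach in the spirit of your proposal, the viable route is the vertex-orientation bookkeeping of Section~\ref{sec:totalorientationsverticesandseamvertices}: any total orientation of $\Gamma_{tet}$ realizes all four tetrahedral vertex types exactly once (so no two of its vertices are oppositely oriented), whereas a seam edge of a totally oriented seamed cobordism must join a vertex of $\Gamma_{tet}$ to a vertex of $\Gamma_{2\thetan}$ of matching type, and $\Gamma_{2\thetan}$, as a boundary of the foam $H$, can only carry orientations whose vertices pair off oppositely --- but that is essentially the paper's later machinery rather than a repair of the $F_{gb}$-orientation-tracking you describe.
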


\begin{proof}
    Suppose $G$ were a totally orientable seamed foamy cobordism from $\Gamma_{tet}$ to $\Gamma_{2\thetan}$.  Then a total orientation on $G$ induces a total orientation on $\Gamma_{2\thetan}$.  Observe that $\Gamma_{2\thetan}$ is the boundary of a $2$-complex $H$ in $\R^3 \subset S^3$ formed as the product of an interval with a planar graph in the form of the letter `H'.  With its interior pushed into $B^4$, $H$ is a totally orientable slice foam for $\Gamma_{2\thetan}$.  Moreover, one readily observes that any total orientation on $\Gamma_{2\thetan}$ arises as the boundary of some total orientation on $H$.  Hence, with the total orientation on $\Gamma_{2\thetan}$ induced from the total orientation on $G$, there is a total orientation on $H$ so that $G \cup_{\Gamma_{2\thetan}} H$ is a totally orientable slice foam for $\Gamma_{tet}$.  However this contradicts Lemma~\ref{lem:notoslicefoam}.
\end{proof}

In contrast to Lemmas~\ref{lem:notoslicefoam} and \ref{lem:notoseamedfoamycobord}, if we permit our foams to have seam vertices, we can always find totally orientable foams.

\begin{example}\label{exa:coveroftet}
    The Klein cover of $\Gamma_{tet}$ is $\RP^3$ and
    the Klein cover of $F_{vtx}$ is the cone on $\RP^3$.
In the double branched cover over the unknot $(\Gamma_{tet})_{ij}$, the edges $(\Gamma_{tet})_{k}$ lift to a Hopf link $(\widetilde{\Gamma}_{tet})_{k}$.  Since the Klein cover of $\Gamma_{tet}$ is then the double branched cover of $(\widetilde{\Gamma}_{tet})_{k}$, we see that the Klein cover of $\Gamma_{tet}$ is $\RP^3$.  Since $(B^4, F_{vtx})$ is a cone on $(S^3, \Gamma_{tet})$, this then implies that the Klein cover of $F_{vtx}$ is the cone on $\RP^3$.  See Figure~\ref{fig:seamvertex}.  
\end{example}

\begin{remark}
Observe that  Example~\ref{exa:coveroftet} 
shows that the Klein cover of a foam with seam vertices is not a $4$-manifold but rather a pseudo $4$-manifold where each singularity is a cone on $\RP^3$.    
\end{remark}

\begin{theorem}\label{thm:doKleinboundstofoamwithseamvertices}
    Every totally oriented Klein graph is the boundary of a totally oriented Klein foam, possibly with seam vertices.
\end{theorem}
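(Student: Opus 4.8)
The plan is: first produce \emph{some} Klein foam $F$ with $\bdry F=\Gamma$ whose three bicolored surfaces are all orientable, introducing seam vertices wherever they are needed; then choose orientations of those surfaces realizing the prescribed total orientation $\vv{\Gamma}$. Two preliminary reductions help. Working one connected component of $\Gamma$ at a time, a vertexless (hence monochromatic) component is capped off by an oriented Seifert surface, so we may assume $\Gamma$ has vertices. And since the product $\Gamma\times I\subset S^3\times I$ is a totally orientable Klein foam (its bicolored surfaces $\Gamma_{ij}\times I$ are orientable) across which any total orientation transports, it is enough to build such an $F$ bounding any Klein graph isotopic to $\Gamma$ and then glue on $\Gamma\times I$; in particular we may isotope $\Gamma$ into any convenient position first.

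For the existence of an orientable foam I would start from a spanning Klein foam $F_0\subset B^4$ with $\bdry F_0=\Gamma$ (\cite[Proposition 2.4]{GR}), which has no seam vertices but, by Lemma~\ref{lem:notoslicefoam}, need not be totally orientable. Non-orientability living inside a single facet $F_i$ is easy to remove: surger $F_i$ along M\"obius-band neighborhoods of orientation-reversing simple closed curves disjoint from $s(F_0)$, capping the resulting circles by embedded orientable surfaces in $B^4$ made disjoint from the rest of $F_0$ by general position and tubing; each such surgery strictly drops the crosscap number of $F_i$, so after finitely many steps every facet is orientable. The essential difficulty is the non-orientability of a bicolored surface $F_{ij}$ carried by curves that cross the seam $1$-complex, which by Lemma~\ref{lem:notoslicefoam} cannot be removed at all without seam vertices. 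To handle it I would modify $F_0$ along a neighborhood of part of its seam, rebuilding that region with seam vertices so as to alter the combinatorics by which the $i$- and $j$-facets are glued; the guiding model is $\Gamma_{tet}$, whose spanning foams necessarily have a non-orientable bicolored surface but which bounds the totally orientable foam $F_{vtx}$ (the cone on $\Gamma_{tet}$), carrying one seam vertex, whose bicolored surfaces are all disks. Carrying out such a surgery on the seam structure of $F_0$ — guided by the $\mathbb{Z}/2$ classes $w_1(F_{rb}),w_1(F_{bg}),w_1(F_{rg})$, which one wants to make all trivial, and using that a neighborhood of a seam vertex forces the bicolored surfaces through it to be planar — should yield a totally orientable Klein foam $F_1$ with finitely many seam vertices and $\bdry F_1=\Gamma$.

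Finally, since each bicolored surface of $F_1$ is orientable, we may orient each one; this induces some total orientation on $\bdry F_1=\Gamma$, and reversing the orientation of a connected component of a bicolored surface reverses the orientations of all of the $\Gamma$-components on its boundary at once, so the realizable total orientations form a coset of the subgroup generated by these component-reversals. A routine further modification of $F_1$ — tubes and surgeries in $B^4$, or additional seam vertices as in Remark~\ref{rem:orbchi}(3) — arranges that the boundary of each component of each bicolored surface is a single component of some $\Gamma_{ij}$, whereupon $\vv{\Gamma}$ is realized by a componentwise choice of orientations.

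I expect the main obstacle to be the seam-surgery step: one must exhibit an explicit way to rebuild the seam $1$-complex of $F_0$ (inserting seam vertices) that provably kills $w_1$ of all three bicolored surfaces simultaneously without reintroducing non-orientability. The local $F_{vtx}$ picture is the right local model, but assembling it into a global fix for an arbitrary $F_0$ is the heart of the matter, and it is exactly this phenomenon that Lemma~\ref{lem:notoslicefoam} shows is unavoidable.
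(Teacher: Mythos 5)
There is a genuine gap, and you have named it yourself: the ``seam-surgery step'' that is supposed to convert the Gille--Robert spanning foam $F_0$ into a totally orientable foam $F_1$ is never actually constructed. You correctly observe that the obstruction $w_1(F_{ij})$ can be carried by curves crossing the seam set and that $F_{vtx}$ is the right local model, but a proof must exhibit a concrete modification of the seam $1$-complex and verify that it kills the non-orientability of all three bicolored surfaces simultaneously; ``should yield'' does not do this, and since Lemma~\ref{lem:notoslicefoam} shows the phenomenon is unavoidable, this is the entire content of the theorem. Your final step has a secondary problem of the same kind: to realize the \emph{prescribed} total orientation you need each component of each bicolored surface of $F_1$ to have connected boundary, and the ``routine further modification'' achieving this would require disconnecting surfaces (tubes and the stabilizations of Remark~\ref{rem:orbchi}(3) only merge or preserve components), so this too is an unproved claim rather than a reduction.

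The paper sidesteps both difficulties by running the construction in the opposite direction: it inducts on $|V(\Gamma)|$, building a totally oriented foamy cobordism downward from $\vv{\Gamma}$ out of elementary pieces (Figure~\ref{fig:elementarycobordism}) each of which is checked to carry the total orientation. The key combinatorial input is the matched/unmatched dichotomy for doubly oriented edges: a matched edge admits a totally oriented unzip cobordism reducing the vertex count by two, and an unmatched edge can be converted into one adjacent to a matched edge by an I--H cobordism (which is where the seam vertices enter). The base case of a vertexless graph is handled by mixed crossing-change and open/close cobordisms followed by disjoint Seifert surfaces. Because the prescribed total orientation is transported through every elementary cobordism, there is no separate ``choose orientations at the end'' step and no $w_1$ computation at all. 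If you want to salvage your approach, the I--H local model is exactly the explicit seam surgery you are missing, but you would still need an argument that finitely many such surgeries suffice globally --- which is what the paper's induction provides.
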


\begin{figure}
    \centering
    \includegraphics[width=.7\textwidth]{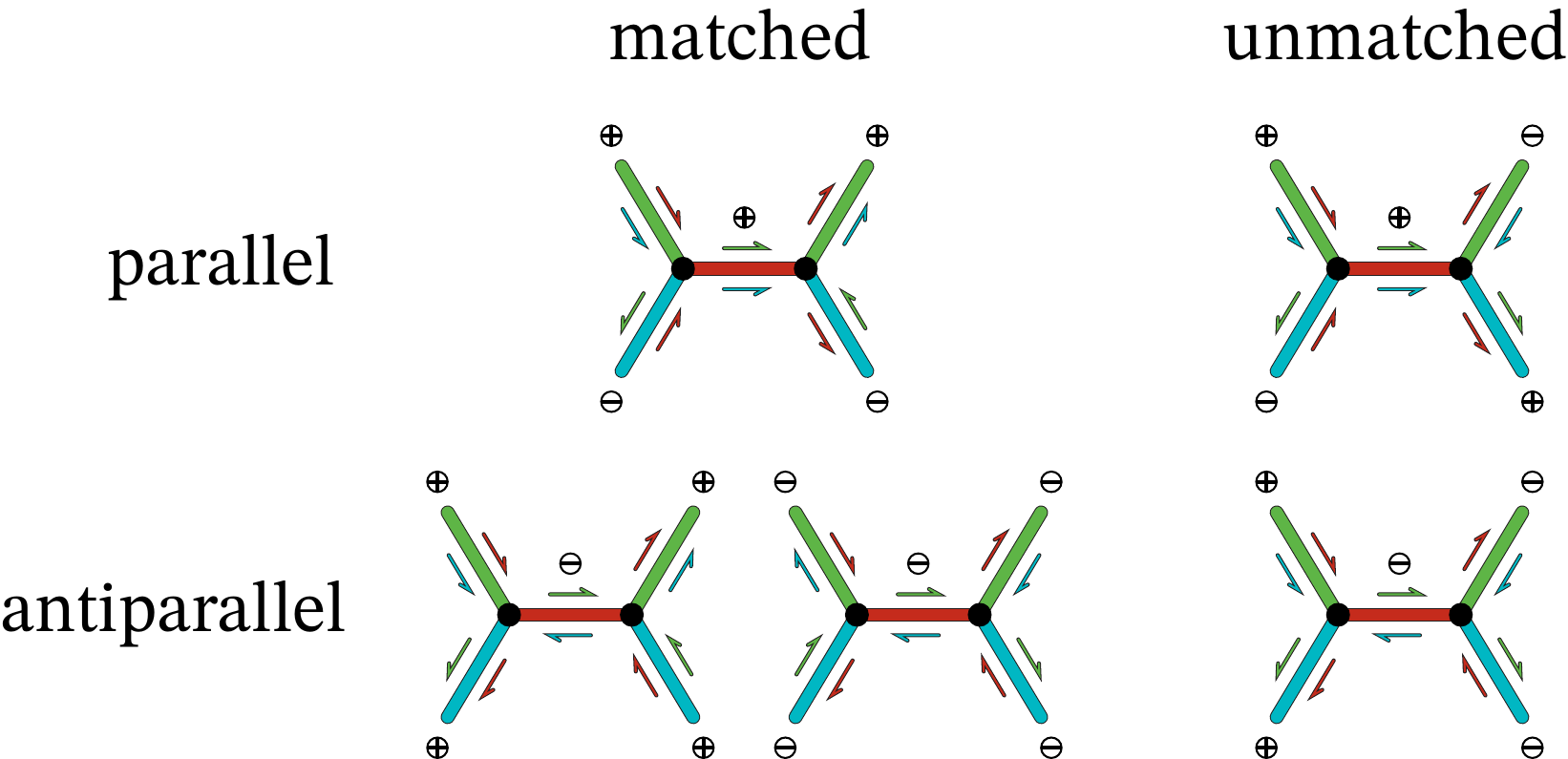} 
    \caption{An edge of a totally oriented Klein graph is either parallel or antiparallel and either matched or unmatched.  Up to symmetry, there are two possibilities for a matched antiparallel edge and one otherwise; these five possibilities are shown for a red edge.}
    \label{fig:doublyorientededges}
\end{figure}

\begin{proof}
    Let $\vv{\Gamma}$ be a totally oriented Klein graph.
    We will induct on the number of vertices of $\vv{\Gamma}$.

    If $\vv{\Gamma}$ has no vertices, then $\vv{\Gamma}$ is a link.  First, by mixed crossing changes, $\vv{\Gamma}$ may be separated into a split link  where each split component is a link of a single color. These mixed crossing changes may be realized by a sequence of foamy cobordisms $F_m$ that respect the double orientations as done in the proof of Theorem~\ref{chi-orb s-m}; one only needs to be mindful of the $ij$-orientations on a mixed crossing change between an $i$-edge and a $j$-edge. 
    Next, two linked components of the same color but with double orientations of opposite sign may be unlinked as follows.   
    Use the elementary open/close cobordism as in Figure~\ref{fig:elementarycobordism}(Top Right) to `slice open' one of the knot components. (This basically connect sums the knot with a trivial theta curve.)  Then use mixed crossing changes with the two new edges to split away the other knot component.  Thereafter the sliced open component can be sewn closed using the elementary open/close cobordism again but in the other direction.    

    Eventually, we have a split link where each split component is a (possibly multi-component) link of a single color and where the two orientations on each component are all parallel or all antiparallel. Each split component bounds a Seifert surface, and we may arrange that the Seifert surfaces for the split components are mutually disjoint. We color each Seifert surface according to the color of its bounding split component and we endow each with two orientations induced from the two orientations on the split component. The result is a foam, bounded by the split link, where the orientations of the bicolored surfaces are coherent, giving a total orientation to the foam. Together with the foams from the previous cobordisms, we obtain a totally oriented foamy cobordism from  the totally oriented, Klein colored link $\vv{\Gamma}$ to the empty set.  Thus $\vv{\Gamma}$ is the boundary of a totally oriented slice foam.
    
    Now assume $\vv{\Gamma}$ has vertices.
    Note that a vertex has an odd number of negative edges incident to it (Definition \ref{def:total orientation definition graph}).  Since $\vv{\Gamma}$ has a vertex, it has an edge of each color.
    
    Let $e$ be a $k$-edge.   Observe that the two $i$-edges adjacent to $e$ have the same sign if and only if the two $j$-edges adjacent to $e$ also have the same sign.  Say such an edge is {\em matched}; otherwise say it is {\em unmatched}.  See Figure~\ref{fig:doublyorientededges}.  
    
    If $e$ is matched, an elementary unzip cobordism, Figure~\ref{fig:elementarycobordism}(Top Center), along $e$ yields a totally oriented foamy cobordism from $\vv{\Gamma}$ to a Klein graph $\vv{\Gamma}'$ with two fewer vertices obtained by unzipping $\vv{\Gamma}$ along $e$.  By induction $\vv{\Gamma}'$ bounds a totally oriented slice foam, and this cobordism extends to a totally oriented slice foam bounded by $\vv{\Gamma}$. 
    
    If $e$ is unmatched, an elementary I-H cobordism, shown in Figure~\ref{fig:elementarycobordism}(Bottom Left), along $e$ yields a totally oriented foamy cobordism from $\vv{\Gamma}$ to a graph $\vv{\Gamma}'$ with the same number of vertices obtained by exchanging $e$ for an edge $e'$. 
    This edge $e'$ has the same set of edges adjacent to it as $e$, but partitioned differently at each of its endpoints.
    This forces $e'$ to have the sign opposite that of $e$. 
    (One may check the two unmatched cases from Figure \ref{fig:doublyorientededges} directly.) 
    Furthermore, if $\vv{\Gamma}$ has no matched edges, then $\vv{\Gamma}'$ will have a matched edge:  If $f$ is an $i$-edge incident to $e$ in $\vv{\Gamma}$, then at its other end is a $k$-edge $g$ with sign opposite that of $e$ since $f$ (and every other edge) is unmatched.  However in $\vv{\Gamma}'$ the edge $g$  has the same sign as $e'$ so that $f$ is now matched.  Thus an elementary unzip cobordism may be performed along $f$ in $\vv{\Gamma}'$ yielding a totally oriented cobordism from $\vv{\Gamma}$ through $\vv{\Gamma}'$ to a Klein graph $\vv{\Gamma}''$ that has two fewer vertices.  By induction $\vv{\Gamma}''$ bounds a totally oriented slice foam, and this cobordism extends it to a totally oriented slice foam bounded by $\vv{\Gamma}$.
\end{proof}

\subsection{Total orientations, vertices, and seam vertices}
\label{sec:totalorientationsverticesandseamvertices}

In the following we show that totally oriented Klein graphs are partitioned into totally oriented seamed foamy cobordism classes by a function $\sv(\bullet)$ that is a lift of the function $|\sv(\bullet)|$.
Indeed it will follow that for any pair of totally oriented Klein graphs $\Gamma_0,\Gamma_1$, any totally oriented foamy cobordism needs at least $|\sv(\Gamma_0)-\sv(\Gamma_1)|$ seamed vertices (of the appropriate RGB/BGR type), and there is such a foam with exactly that many.

\begin{defn}[Vertex total orientations, RGB and BGR types]\label{defn:TOvertextypes}
Locally, a vertex of a Klein graph $\Gamma$ with its three incoming edges admits $8$ total orientations. These may be arranged as the vertices of a cube joined by an edge when one bicolored orientation is reversed. See Figure~\ref{fig:TOvertextypes}.  Reversing the total orientation pairs them up as opposite vertices of the cube.   One pair has three negative edges and the other three pairs have a single negative edge. A vertex with three negative edges has a cyclic ordering as either {\em RGB} or {\em BGR}: Say the ordering is RGB if the bicolored arc orientations go $\vv{rg}$, $\vv{gb}$, and $\vv{br}$ where $\vv{ij}$ indicates that the $ij$-arc is oriented at the vertex with an incoming $i$-edge and and outgoing $j$-edge.  In Figure~\ref{fig:TOvertextypes}, the vertex at the far right has the RGB type while the vertex at the far left has the BGR type.
\end{defn}

\begin{figure}
    \includegraphics[width=.6\textwidth]{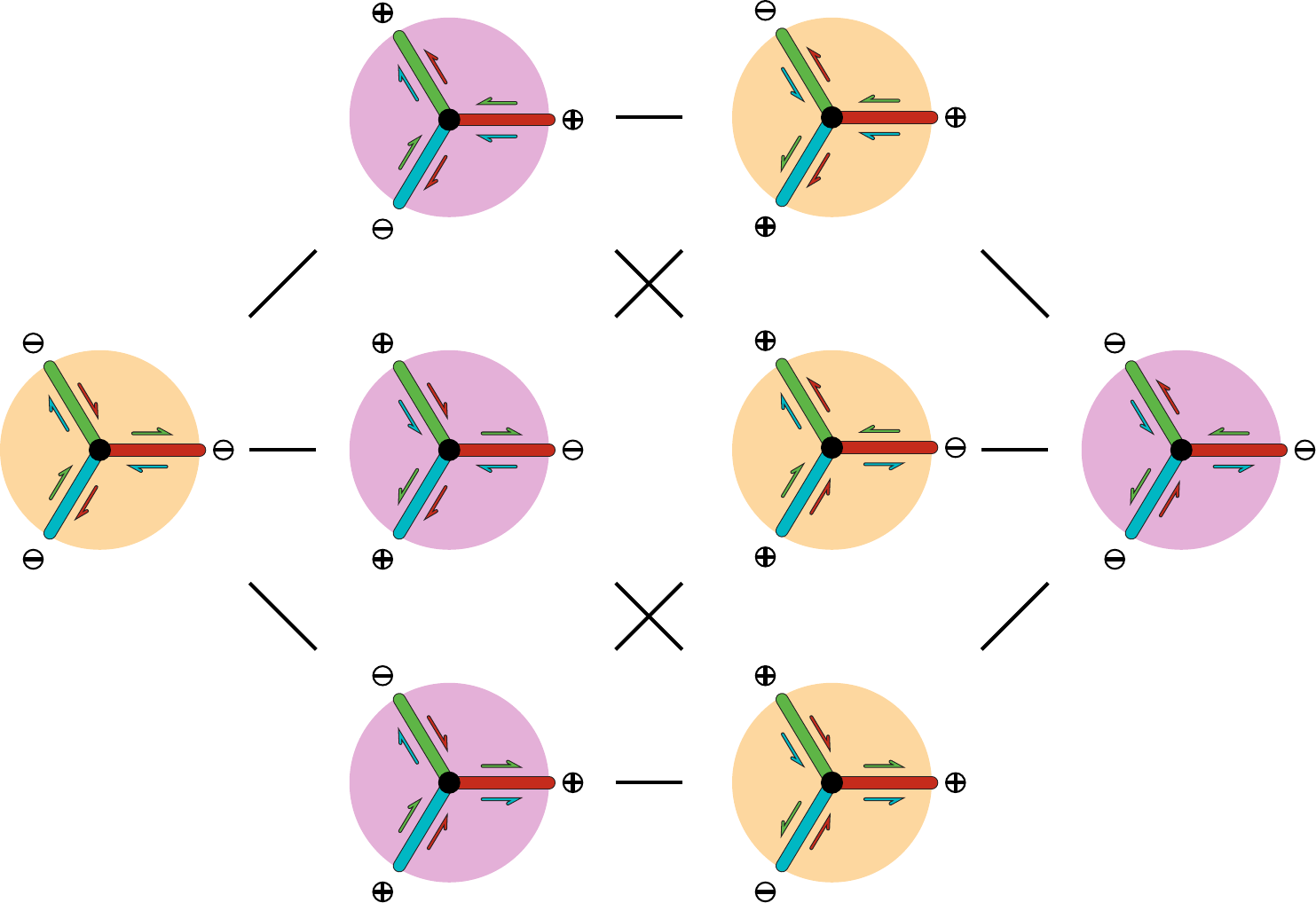}
    \caption{The eight total orientations of vertices in a Klein graph are arranged in a cubic graph.  Adjacent vertices differ by reversal of one bicolored orientation.  Vertices with opposite orientation are at opposite vertices of the cube.  The highlighting shows the partition into the two tetrahedral orientation types.}
    \label{fig:TOvertextypes}
\end{figure}

\begin{defn}[Signed seam vertex count $\sv$]\label{defn:signedsvcount}
The tetrahedral graph $\Gamma_{tet}$ admits $2$ total orientations.  For each total orientation, the four vertices correspond to one set of the four non-adjacent vertices in the cube graph.  See Figure~\ref{fig:TOvertextypes}.  Hence we can also define a totally oriented tetrahedral graph as either RGB or BGR according to the type of its triply negative vertex.  

A regular neighborhood of a seam vertex in a foam is a cone $F_{vtx}$ on the tetrahedral graph.  Hence a seam vertex of a totally oriented foam is also either RGB or BGR according to the totally oriented tetrahedral graph in the boundary of this cone.

For a totally oriented Klein graph $\vv{\Gamma}$, let $RGB(\vv{\Gamma})$ be the number of $RGB$ vertices and let $BGR(\vv{\Gamma})$ be the total number of $BGR$ vertices.  Then define the {\em signed seam vertex count} to be $\sv(\vv{\Gamma}) = RGB(\vv{\Gamma})-BGR(\vv{\Gamma})$.  
\end{defn}

\begin{lemma}
\label{lem:sv-additive}
The quantity $\sv(\vv{\Gamma})$ is unchanged by mirroring, negated by orientation reversal, and additive under connected sum. 
That is,
\[\sv(\mir{\vv{\Gamma}}) = \sv(\vv{\Gamma}) \qquad \mbox{but} \qquad \sv(-\vv{\Gamma}) = -\sv(\vv{\Gamma})\]
and
\[ \sv(\vv{\Gamma_1} \#_2 \vv{\Gamma_2}) = \sv(\vv{\Gamma_1}) + \sv(\vv{\Gamma_2}) \qquad \mbox{and} \qquad \sv(\vv{\Gamma_1} \#_3 \vv{\Gamma_2}) = \sv(\vv{\Gamma_1}) + \sv(\vv{\Gamma_2}).\]
\end{lemma}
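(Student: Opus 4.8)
The plan is to reduce the whole statement to the behavior of the local orientation data at a single vertex, for which I would first restate Definition~\ref{defn:TOvertextypes} in a more symmetric form. At a vertex $v$ of a totally oriented Klein graph, the three bicolored arcs through $v$ determine a \emph{color tournament} on $\{r,g,b\}$ in which $i$ beats $j$ exactly when the $ij$-arc is oriented $\vv{ij}$ at $v$; chasing the definition of a parallel/antiparallel double orientation shows that the $i$-edge at $v$ has sign $+$ if and only if the color $i$ is a source or a sink of this tournament, so $v$ is triply negative if and only if the tournament is a $3$-cycle. In that case the $RGB$ and $BGR$ types are precisely the two cyclic orientations of the $3$-cycle, while singly-negative vertices yield an acyclic tournament and carry no type. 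The upshot I want is that the color tournament at $v$, and hence the contribution $c(v)\in\{+1,-1,0\}$ of $v$ to $\sv$ (equal to $+1$ at an $RGB$ vertex, $-1$ at a $BGR$ vertex, and $0$ otherwise), depends only on the colors and on the orientations of the bicolored links in a neighborhood of $v$, with no reference to the ambient embedding, and that $\sv(\vv{\Gamma})=\sum_{v}c(v)$.

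Granting this, mirroring and reversal are immediate. Passing from $\vv{\Gamma}$ to $\mir{\vv{\Gamma}}$ carries each vertex to its mirror image with the same colors and the same induced orientations on the bicolored links, so the color tournament, and therefore $c$, is unchanged at every vertex; hence $\sv(\mir{\vv{\Gamma}})=\sv(\vv{\Gamma})$. Passing from $\vv{\Gamma}$ to $-\vv{\Gamma}$ reverses every bicolored orientation, hence reverses every arrow of the color tournament at every vertex; a $3$-cycle becomes the opposite $3$-cycle, so the $RGB$ and $BGR$ types are interchanged, while an acyclic tournament stays acyclic, so $c(v)$ is negated at every $v$ and $\sv(-\vv{\Gamma})=-\sv(\vv{\Gamma})$.

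For the connected sums I would track what happens to the vertex set and to $c$. An edge sum creates no vertex and destroys none, so $V(\Gamma_1\#_2\Gamma_2)=V(\Gamma_1)\sqcup V(\Gamma_2)$; since the gluing is along a matched pair of edges, the induced total orientation restricts to the given total orientations on the two summands, so $c$ is unchanged at every vertex and $\sv$ is additive under $\#_2$. A vertex sum along vertices $v_1\in V(\Gamma_1)$ and $v_2\in V(\Gamma_2)$ deletes $v_1$ and $v_2$ and creates no new vertex, so $V(\Gamma_1\#_3\Gamma_2)=(V(\Gamma_1)\setminus\{v_1\})\sqcup(V(\Gamma_2)\setminus\{v_2\})$, and again the induced total orientation restricts to the given ones away from the glued pair, so every surviving vertex keeps its value of $c$. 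Summing over vertices then gives $\sv(\vv{\Gamma_1}\#_3\vv{\Gamma_2})=\sv(\vv{\Gamma_1})+\sv(\vv{\Gamma_2})-c(v_1)-c(v_2)$.

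The one point needing genuine care, which I expect to be the main obstacle, is this last correction term: I must verify that the compatibility hypothesis under which $\#_3$ can be performed respecting total orientations, namely that the orientations are ``opposite on a pair of vertices,'' literally says that the local orientation at $v_2$ is the reversal of that at $v_1$ under the color-respecting identification of their link neighborhoods supplied by the gluing. Once that is pinned down, the reversal computation above gives $c(v_2)=-c(v_1)$, the two correction terms cancel, and additivity under $\#_3$ follows. All the remaining steps are routine bookkeeping with the color-tournament reformulation.
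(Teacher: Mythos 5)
Your proof is correct and follows essentially the same route as the paper's (very terse) argument: the paper likewise treats $\sv$ as a sum of local vertex contributions, notes that mirroring preserves and reversal swaps the $RGB$/$BGR$ types, and observes that an edge sum keeps the oriented vertex set while a vertex sum fuses a pair of oppositely oriented vertices whose contributions cancel. Your color-tournament bookkeeping is just a clean repackaging of the case analysis the paper delegates to Definition~\ref{defn:TOvertextypes} and Figure~\ref{fig:TOvertextypes}, and your flagged "point of care" about the compatibility hypothesis for $\#_3$ is exactly the "opposite on a pair of vertices" condition built into the definition of the oriented vertex sum.
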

\begin{proof}
The behavior under mirroring and orientation reversal are immediate from the actions of these operations on the signs of the vertices.   The additivity follows fairly directly too as an edge connected sum has the same set of oriented vertices as its summands while a vertex connected sum fuses vertices of opposite sign.  Indeed, a vertex connected sum may be viewed as a tribanding (Definition~\ref{defn:triband} below) of the split union of the two summands.
\end{proof}

\begin{defn}[Triband cobordism]\label{defn:triband}
    Let $\YGraph$  be the graph in $B^3$ that is the cone on $3$ points in $S^2$. Given a Klein graph $\Gamma_0$, {\em triband} on $\Gamma$ is an embedding of $\phi \colon \YGraph \times I \to S^3$  so that 
    \begin{itemize}
        \item the intersection $\phi(\YGraph \times I) \cap \Gamma_0 = \phi(\YGraph \times \bdry I)$ is a closed neighborhood of a pair of vertices, and
        \item  the three arcs of $\phi(\bdry \YGraph \times I)$ connect edges of the same color.
    \end{itemize} 
    Let $\Gamma_1$ be the Klein graph resulting from surgery along the triband.   That is, $\Gamma_1 = (\Gamma_0 - \phi(\YGraph \times \bdry I)) \cup \phi(\bdry \YGraph \times I)$.  We say $\Gamma_1$ is the result of a {\em tribanding} of $\Gamma_0$ along the two vertices.

     In $S^3 \times [0,2]$, the foam $F = \Gamma_0 \times [0,1] \cup \phi(\YGraph \times I) \times \{1\} \cup \Gamma_1 \times [1,2]$ gives the {\em triband cobordism} from $\Gamma_0$ to $\Gamma_1$.  

    Furthermore, if  $\Gamma_0$ is endowed with a total orientation and the tribanding is along a pair of oppositely oriented vertices, then the total orientation extends across the triband cobordism giving a total orientation on $\Gamma_1$ as well. 
\end{defn}

\begin{lemma}\label{lemma:vertexorientations}
        A connected component of a totally oriented Klein graph has either 
        \begin{itemize}
            \item no vertices,
            \item a pair of oppositely oriented vertices, or
            \item all four types of vertices of tetrahedral orientation.
        \end{itemize}
\end{lemma}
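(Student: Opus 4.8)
The plan is to give a direct combinatorial argument, encoding each vertex total orientation as an element of $(\Z/2\Z)^3$ and tracking how it changes across a single edge. First I would fix conventions identifying the eight vertex total orientations of Definition~\ref{defn:TOvertextypes} with $(\Z/2\Z)^3$, with the three coordinates recording the orientations of the three bicolored arcs through the vertex, so that the $RGB$ type becomes $(0,0,0)$ and the cube of Figure~\ref{fig:TOvertextypes} becomes the standard cube $\{0,1\}^3$ (with edges between vectors differing in one coordinate). Under this identification, reversing the total orientation is the antipodal map $\tau\mapsto\tau+(1,1,1)$ (it reverses all three bicolored orientations); the two tetrahedral orientation types of Definition~\ref{defn:signedsvcount} are the two parity classes, namely the even-sum subgroup $H=\{(0,0,0),(1,1,0),(1,0,1),(0,1,1)\}$ (containing $RGB$) and its nontrivial coset (containing $BGR$); and a vertex is triply negative exactly when its three coordinates agree. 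I write $\tau(v)$ for the type of a vertex $v$.

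The crux is a local claim. Let $e$ be a color-$k$ edge with endpoints $v,w$ and let $\{k,\ell,m\}=\{r,g,b\}$. The edge $e$ is a sub-arc of exactly the two bicolored links containing color $k$, namely $\Gamma_{k\ell}$ and $\Gamma_{km}$, and in each it is the segment joining the local arc at $v$ to the local arc at $w$; hence the $k\ell$- and $km$-coordinates of $\tau(w)$ are each opposite to those of $\tau(v)$, while $e$ imposes no constraint on the remaining $\ell m$-coordinate. Consequently $\tau(w)-\tau(v)$ is either $(1,1,1)$ — so $v$ and $w$ are oppositely oriented — or the element $\delta_k\in H$ that reverses exactly the two coordinates indexed by the links containing $k$. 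Note $\{\delta_r,\delta_g,\delta_b\}=\{(1,1,0),(1,0,1),(0,1,1)\}=H\setminus\{(0,0,0)\}$; call $e$ a \emph{flip edge} in the first case and a \emph{short edge} in the second. (Recall also that a Klein graph has no loops, since a monochromatic loop would occupy two of the three edge-slots at a vertex and leave room for only one of the two remaining colors.)

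With this in hand the argument is short. Given a component with a vertex: if it has a flip edge, its endpoints give the required oppositely oriented pair. Otherwise every edge is short; since each $\delta_k$ has even coordinate sum, $\tau$ has constant parity along edges, so by connectedness all vertices lie in one tetrahedral class $\mathcal{T}_0$ (a coset of $H$). Moreover two parallel edges cannot both be short, since their distinct colors would force $\delta_k=\delta_{k'}$; so there are no multi-edges, and together with the absence of loops this means every vertex $v_0$ has three distinct neighbors, one along each color, of types $\tau(v_0)+\delta_r$, $\tau(v_0)+\delta_g$, $\tau(v_0)+\delta_b$. Since $\{(0,0,0)\}\cup\{\delta_r,\delta_g,\delta_b\}=H$, these three neighbors together with $v_0$ realize $\tau(v_0)+H=\mathcal{T}_0$, i.e.\ all four vertex types of the tetrahedral orientation $\mathcal{T}_0$ occur — the third case.

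The main obstacle is the local claim of the second paragraph: confirming that a color-$k$ edge forces precisely the two color-$k$ bicolored orientations to reverse across it and constrains nothing else, together with nailing down the sign conventions so that reversal is genuinely the antipodal map on the cube and the tetrahedral types are genuinely the parity classes. Both are bookkeeping, but they need care. Once that is settled the only remaining content is the observation that along short edges a vertex already sees every type of its own tetrahedral class among its three neighbors.
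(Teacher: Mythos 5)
Your proof is correct and follows essentially the same route as the paper's: the local claim that a color-$k$ edge forces $\tau(w)-\tau(v)\in\{(1,1,1),\delta_k\}$ is exactly the paper's observation (made by inspecting Figure~\ref{fig:TOvertextypes}) that an edge joins a vertex either to its opposite or to a distinct type in the same tetrahedral class, and the remaining steps — ruling out multi-edges because two edges of distinct colors would impose incompatible short-edge differences, then reading off the other three types from the three neighbors of a single vertex — coincide with the paper's argument. Your $(\Z/2\Z)^3$ encoding simply makes the paper's ``one checks'' explicit; the bookkeeping you flag as the main obstacle does work out as you describe.
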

    \begin{proof}
        Let $v$ be a vertex of a connected totally oriented Klein graph that has no pair of oppositely oriented vertices. We must then show that the graph has the other three orientation types of vertices of same the tetrahedral orientation of $v$.  Using Figure \ref{fig:TOvertextypes}, for any color $i$, one checks that an $i$-edge may join $v$ to only two types of oriented vertices, one of them has the opposite orientation while the other belongs to the same tetrahedral orientation (shown as purple and orange in the figure). If there were a pair of edges between two vertices they must have opposite orientation, therefore there is at most one edge between any pair of vertices. Thus $v$ must be adjacent to three different vertices. One further sees that the three vertices adjacent to $v$ must all have different orientation types, but belong to the same tetrahedral orientation.
    \end{proof}

\begin{lemma}
\label{lem:sv-combinatorics}
Let $\Gamma$ be a totally oriented Klein graph.   Any totally oriented foam $F$ with $\Gamma$ as its boundary requires at least $|\sv(\Gamma)|$ seam vertices.  Moreover there is a totally oriented foam $F$ with $\bdry F = \Gamma$ with exactly $|\sv(\Gamma)|$ seam vertices and these vertices are $RGB$ if $\sv(\Gamma)>0$ and $BGR$ if $\sv(\Gamma)<0$.
\end{lemma}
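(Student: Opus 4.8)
The plan is to prove the two assertions separately: the lower bound $\#\{\text{seam vertices of }F\}\ge|\sv(\Gamma)|$ for every totally oriented foam $F$ with $\bdry F=\Gamma$ via a conservation law for $\sv$, and the construction of an extremal foam with all seam vertices of one type by an inductive argument. For the lower bound, first note that the double orientation assigns each facet of $F$ a sign $\pm$ (parallel/antiparallel) just as for edges in Definition~\ref{def:total orientation definition graph}. Chasing boundary-orientation conventions through the standard local models of Figure~\ref{fig:nbhds}, I would record two local facts: (i) along every seam edge of $F$ the product of the signs of its three incident facets is $-1$, so either exactly one or all three are negative; and (ii) at every vertex $v$ of $\Gamma$ the sign of each edge of $\Gamma$ at $v$ equals the sign of the facet of $F$ of the same color at $v$. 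Let $F^-$ be the union of the closures of the negative facets and let $s^-\subset s(F)$ be its triple locus, i.e.\ the union of the seam edges all of whose three facets are negative. Using (i), (ii), the fact that every vertex of a totally oriented Klein graph has an odd number of negative edges, and the fact (read off from Definition~\ref{defn:signedsvcount} and Figure~\ref{fig:TOvertextypes} by a degree count) that in a totally oriented $\Gamma_{tet}$ the three negative edges form a star centred at the unique triply-negative vertex — so that exactly one triple-negative seam edge abuts each seam vertex of $F$ — one sees that $s^-$ is a compact, properly embedded $1$-manifold whose boundary points are precisely the seam vertices of $F$ together with the vertices of $\Gamma$ at which all three edges are negative.

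Next I would orient $s^-$: on a triple-negative seam edge the orientations of $F_{rb}$, $F_{rg}$, $F_{bg}$, together with a fixed cyclic order $r\to g\to b$, each pick out a direction along the seam, and these three prescriptions agree precisely because all three facets are negative (this is where (i) is used in its sharp ``$=-1$'' form). Tracing the conventions at the endpoints — using (ii), the star structure at seam vertices, and the definitions of the $RGB$/$BGR$ types in Definitions~\ref{defn:TOvertextypes} and \ref{defn:signedsvcount}, with the choice of ``positive'' as in Remark~\ref{rem:svlift} — shows that the oriented boundary $\bdry s^-$ assigns weight $+1$ to an $RGB$ seam vertex and $-1$ to a $BGR$ one, and compatible weights $-1$ and $+1$ to triply-negative $RGB$-type and $BGR$-type vertices of $\Gamma$. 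Since the signed boundary count of an oriented $1$-manifold vanishes, this yields
\[
\bigl(\#RGB\text{ seam vertices of }F\bigr)-\bigl(\#BGR\text{ seam vertices of }F\bigr)=\sv(\Gamma),
\]
so $|\sv(\Gamma)|\le(\#RGB)+(\#BGR)=\#\{\text{seam vertices of }F\}$, with equality exactly when all seam vertices of $F$ have a single type.

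For the existence of an extremal foam, reversing the total orientation (which negates $\sv$ and swaps $RGB\leftrightarrow BGR$, by Lemma~\ref{lem:sv-additive}) lets us assume $\sv(\Gamma)=n\ge0$; we must build a totally oriented slice foam with exactly $n$ seam vertices, all $RGB$. I would induct on $|V(\Gamma)|$, running the reduction moves of Theorem~\ref{thm:doKleinboundstofoamwithseamvertices}: if $\Gamma$ has no vertices it is a link and bounds a seam-vertex-free totally oriented slice foam; if $\Gamma$ has a matched edge, unzipping along it is a seam-vertex-free cobordism (hence $\sv$-preserving, by the conservation law) lowering $|V|$ by two; and if $\Gamma$ has a pair of oppositely oriented vertices, tribanding along them (Definition~\ref{defn:triband}) is again seam-vertex-free and $\sv$-preserving and lowers $|V|$ by two. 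In the remaining case, in which by Lemma~\ref{lemma:vertexorientations} every component with vertices is purely tetrahedral of a single type, the seam-vertex-free reduction stalls and one must spend a seam vertex; the point is to do so efficiently — trading a single $RGB$ seam vertex, realized locally by the cone $F_{vtx}$ on a totally oriented $\Gamma_{tet}$ of $RGB$ type (whose unique seam vertex is $RGB$ by Definition~\ref{defn:signedsvcount}), for exactly one unit of $\sv$ — and then applying the inductive hypothesis to the resulting graph, which has $\sv=n-1$. Stacking all the cobordisms gives a totally oriented slice foam for $\Gamma$ with $n$ seam vertices, all $RGB$; together with the lower bound this is the asserted extremal foam.

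I expect the main obstacle to be in the existence half. The orientation bookkeeping for the lower bound is delicate but routine once one is careful that the three cyclic-order prescriptions for orienting a triple-negative seam edge genuinely coincide and that the endpoint weights line up with the $RGB$/$BGR$ labels. The substantive difficulty is to justify the stalled-reduction step: one must show that when no matched edge and no oppositely-oriented vertex pair exist the graph admits a move creating exactly one seam vertex of the correct type and dropping $\sv$ by one (equivalently, that every totally oriented Klein graph is seam-vertex-freely totally-oriented cobordant to a disjoint union of a graph with $\sv=0$ and $|\sv(\Gamma)|$ standard $\Gamma_{tet}$'s of the appropriate type), and that a graph with $\sv=0$ bounds a seam-vertex-free totally oriented slice foam — the precise converse of the obstruction in Lemma~\ref{lem:notoslicefoam}.
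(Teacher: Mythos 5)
Your proposal is correct, and its first half takes a genuinely different route from the paper's. For the lower bound the paper does not set up a conservation law: it first uses triband cobordisms to pass, seam-vertex-freely, to a graph $\Gamma'$ with no oppositely oriented pair of vertices, notes via Lemma~\ref{lemma:vertexorientations} that $\Gamma'$ then has exactly $\sv(\Gamma')$ triply negative vertices, all of one type, and argues that each such vertex must be joined by its seam edge to a seam vertex (a properly embedded seam edge would have to end at an oppositely oriented vertex, which $\Gamma'$ lacks), with each seam vertex able to absorb only one such edge because its link $\Gamma_{tet}$ has a unique triply negative vertex. Your oriented $1$-manifold $s^-$ packages exactly these local facts into the global identity $\#RGB_{\mathrm{seam}}-\#BGR_{\mathrm{seam}}=\sv(\Gamma)$; I checked your facts (i) and (ii) and the coherence of the three orientation prescriptions on a triple-negative seam edge, and they hold. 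This is strictly stronger than what the paper proves: it applies to $\Gamma$ directly without normalizing, shows that excess seam vertices of any totally oriented foam occur in $RGB$/$BGR$ pairs, and delivers the ``moreover'' clause about the type of the seam vertices in an extremal foam for free, whereas the paper obtains that clause only from its explicit construction. For the existence half you and the paper do essentially the same thing---triband away opposite pairs, then spend one seam vertex per remaining tetrahedral quadruple by capping a vertex and its three neighbors with the cone $F_{vtx}$, reducing to a link---and the ``stalled-reduction'' step you flag as the substantive difficulty is dispatched just as tersely in the paper (one sentence). Your residual worry that a graph with $\sv=0$ bounds a seam-vertex-free foam is already resolved by your own tribanding step: once opposite pairs are gone, $\sv=0$ forces the graph to have no vertices at all by Lemma~\ref{lemma:vertexorientations}, since components of opposite tetrahedral types would supply an opposite pair.
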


\begin{proof}
    Suppose a totally oriented Klein graph $\Gamma$ bounds a totally oriented foam $F$ without seam vertices.
    Using triband cobordisms, one may construct a foamy cobordism $F$ without seam vertices from $\Gamma$ to a totally oriented Klein graph $\Gamma'$ where no pairs of vertices have opposite orientation. Observe that $\sv(\Gamma) = \sv(\Gamma')$.  By a total orientation reversal if needed, assume $\sv(\Gamma') \geq 0$.
    By Lemma~\ref{lemma:vertexorientations}, any component of $\Gamma'$ with a vertex must have a RGB or a BGR vertex an.  So $\Gamma'$ has $\sv(\Gamma')$ RGB vertices (since we have eliminated pairs of vertices of opposite orientation) and in fact $\sv(\Gamma')$ vertices of each other RGB tetrahedral orientation type.

    Using a seam vertex we can cap off a vertex and its three neighbors.  With $|\sv(\Gamma')|$ applications of this, we arrive at a totally oriented Klein graph without vertices. Such graphs bound totally orientable foams without seam vertices.   Put together, $\Gamma$ bounds a totally oriented foam with exactly $|\sv(\Gamma)|$ seam vertices.

    If $\Gamma$ were to bound a totally oriented foam with fewer seam vertices, then so does $\Gamma'$.  However then some triple negative RGB vertex of $\Gamma'$ would have to be the end of a properly embedded seam edge.  At the other end would be a vertex with the opposite orientation, contrary to the construction of $\Gamma'$.
\end{proof}

\subsection{Cobordisms and connected sums of totally oriented Klein graphs}
\label{subsec:cobordisms}

\begin{lemma}
    Suppose $\vv{F}$ is a totally oriented foamy cobordism in $S^3 \times [0,1]$ from $\vv{\Gamma_0}$ to $\vv{\Gamma_1}$ with a spanning seam edge $s$ between vertices $v_i$ of $\vv{\Gamma}_i$. 
 Then $\vv{F}'=\vv{F} \cut N(s)$ is a totally oriented foam in $B^4$ with $\bdry \vv{F}' = \vv{\Gamma_1} \#_3 -m(\vv{\Gamma}_0)$ where $-m(\vv{\Gamma}_0)$ is the reversed mirror of $\vv{\Gamma}_0$ and the connected sum is along the vertices $v_i$.
\end{lemma}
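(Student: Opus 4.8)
We want to show that removing a neighborhood of a single spanning seam edge $s$ of $\vv{F}$ converts a foamy cobordism into a slice foam in $B^4$ whose boundary is a vertex connected sum. The plan is to analyze this locally near $s$ and then globally.

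First I would set up the local picture. By Definition~\ref{defn:foam}, a neighborhood $N(s)$ of a seam edge is diffeomorphic to the product of an interval (the seam direction) with the standard neighborhood of a point in a seam edge shown in Figure~\ref{fig:nbhds}: three half-disks meeting along an arc, one of each color. Since $s$ is a \emph{spanning} seam edge running from $v_0 \in \vv{\Gamma_0} \subset S^3 \times \{0\}$ to $v_1 \in \vv{\Gamma_1} \subset S^3 \times \{1\}$, the neighborhood $N(s)$ meets $S^3 \times \{0\}$ in a neighborhood of the vertex $v_0$ — that is, a cone on three colored points, the standard neighborhood of a vertex of a Klein graph — and similarly meets $S^3 \times \{1\}$ in a neighborhood of $v_1$. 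Removing $\Int N(s)$ from $S^3 \times [0,1]$ produces a manifold diffeomorphic to $B^4$: indeed $S^3 \times [0,1]$ with an open regular neighborhood of a properly embedded unknotted arc (with endpoints on the two distinct boundary spheres) removed is $B^4$, and $s$ is exactly such an arc (it is unknotted because its neighborhood $N(s)$ is standard). So $\vv{F}' = \vv{F} \cut N(s)$ is a properly embedded colored $2$-complex in $B^4$.

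Next I would identify the boundary. The boundary of $\vv{F}'$ in $\bdry B^4 = S^3$ is assembled from three pieces: $(\vv{\Gamma_0} - N(v_0)) \subset S^3 \times \{0\}$, $(\vv{\Gamma_1} - N(v_1)) \subset S^3 \times \{1\}$, and the part of $\bdry \vv{F}'$ lying in $\bdry N(s)$. That last piece is $F \cap \bdry N(s)$ minus the bits on the two end-spheres; since $N(s) \cong (\text{interval}) \times (\text{cone on 3 colored points})$, its boundary torus-like piece meets $F$ in three rectangles (one per color), each connecting an edge-stub at $v_0$ to an edge-stub of the same color at $v_1$. This is precisely the recipe in the Definition of $\Gamma_1 \#_3 \Gamma_2$: delete a ball-neighborhood of a vertex from each graph and glue in three color-matched arcs. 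The orientation-reversal in $S^3 \times \{0\}$ coming from it being the bottom of the cobordism is why we get $-\mir{\vv{\Gamma_0}}$ rather than $\vv{\Gamma_0}$ (as already recorded in Definition~\ref{defn:foamycobordism}, $\bdry \vv{F} = \vv{\Gamma_0} \cup -\mir{\vv{\Gamma_1}}$, so the surviving end after capping behaves the same way), yielding $\bdry \vv{F}' = \vv{\Gamma_1} \#_3 -\mir{\vv{\Gamma_0}}$ with the sum along $v_0, v_1$.

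Finally, the total orientation. The foam $\vv{F}$ carries a total orientation, i.e.\ an orientation of each bicolored surface $F_{ij}$, and removing $\Int N(s)$ does not disconnect or otherwise damage the bicolored surfaces except to cut a disk out along $s$ (a neighborhood of $s$ inside $F_{ij}$ is a disk, being two of the three half-disk sheets glued along $s$), so each $F'_{ij}$ inherits an orientation and $\vv{F}'$ is totally oriented. By Definition~\ref{def:total orientation definition foam} this total orientation restricts to a total orientation on $\bdry \vv{F}' = \vv{\Gamma_1} \#_3 -\mir{\vv{\Gamma_0}}$, and by construction it agrees on each summand with (the reversed mirror of) the total orientation $\vv{F}$ induced on $\vv{\Gamma_0}$ and with the total orientation induced on $\vv{\Gamma_1}$; this is the ``compatible with the total orientations'' vertex sum. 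The only real subtlety — the step I'd expect to be the main obstacle — is bookkeeping the effect of the orientation-reversing identification on the $S^3 \times \{0\}$ end and checking that the mirror/reverse conventions match the ones fixed earlier; everything else is a routine unpacking of the standard local models of Figure~\ref{fig:nbhds} and the Definition of $\#_3$.
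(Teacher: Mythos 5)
Your proposal is correct and takes the same route as the paper: the paper's entire proof is the one-line remark that the claim ``follows from basically the same proof as for the case of cobordisms between oriented links,'' and what you have written is precisely that standard argument (remove a tubular neighborhood of the spanning seam edge, observe the complement is $B^4$ since a properly embedded arc in $S^3\times[0,1]$ is unknotted by codimension, identify the new boundary pieces with the $\#_3$ recipe, and carry the total orientation through). The only nitpick is that the arc is unknotted because arcs in $4$-manifolds have codimension $3$, not because its neighborhood is standard, but this does not affect the argument.
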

\begin{proof}
    This follows from basically the same proof as for the case of cobordisms between oriented links.
\end{proof}

\begin{lemma}\label{lem:seamtransposition}
    Let $\vv{\Gamma}$ be a totally oriented Klein graph in $S^3$.  Let $v_0$ and $v_1$ be two vertices of the same orientation type.  Then there is a seamed foamy cobordism from $\vv{\Gamma}$ to itself for which the permutation on the vertices induced by the seams is a transposition of $v_0$ and $v_1$.
\end{lemma}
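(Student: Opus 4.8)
The plan is to realize the cobordism as a local modification of the product cobordism $\vv{\Gamma}\times[0,1]\subset S^3\times[0,1]$. That product is a totally oriented Klein foam (the total orientation of $\vv{\Gamma}$ extends via the product), its seams are the vertical arcs $\{v\}\times[0,1]$ for $v\in V(\Gamma)$, and it induces the identity permutation on vertices; so it suffices to alter it near $v_0$ and $v_1$. First I would choose an embedded arc $\alpha\subset S^3$ from $v_0$ to $v_1$ with interior disjoint from $\Gamma$ (such an arc exists since any path in $S^3$ can be pushed off $\Gamma$), and take a regular neighborhood $N\cong B^4$ in $S^3\times[0,1]$ of the contractible set $W=(\{v_0\}\times[0,1])\cup(\{v_1\}\times[0,1])\cup(\alpha\times\{\tfrac12\})$. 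Inside $N$, the product foam meets $\partial N$ in six vertical arcs --- three running along the legs of the $Y$-graph $\Gamma\cap B_0$ about $v_0$ and three along the legs of $\Gamma\cap B_1$ about $v_1$ --- together with the four $Y$-graphs in $S^3\times\{0,1\}$. I would then excise $(\vv{\Gamma}\times[0,1])\cap N$ and glue in its place a totally oriented Klein foam $P$ with the same boundary data but whose two seam edges are arcs $\gamma_0$ from $(v_0,0)$ to $(v_1,1)$ and $\gamma_1$ from $(v_1,0)$ to $(v_0,1)$, chosen disjoint and embedded in $N$ (possible since $\dim N=4$).

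To build $P$, I would take its facet of each color $i$ to be a single disk $D_i$ whose boundary circle runs up $\gamma_0$, across the $i$-colored leg of $\Gamma\cap B_1$ at the top, down the corresponding side arc of $\partial N$, across the $i$-leg of $\Gamma\cap B_1$ at the bottom, up $\gamma_1$, across the $i$-leg of $\Gamma\cap B_0$ at the top, down the other side arc, and across the $i$-leg of $\Gamma\cap B_0$ at the bottom back to $(v_0,0)$. The three disks meet exactly along $\gamma_0\cup\gamma_1$, forming two honest seam edges with one facet of each color, and their interiors can be made disjoint in $N$ by taking them standard (for instance as radial sheets in a bicollar of $\gamma_0\cup\gamma_1$). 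It remains to equip $P$ with a total orientation extending that of $\vv{\Gamma}$, and this is the only place the hypothesis enters: because $v_0$ and $v_1$ have the same orientation type, there is a color- and double-orientation-preserving identification of $\Gamma\cap B_0$ with $\Gamma\cap B_1$ (Definition~\ref{defn:TOvertextypes}), and this identification is precisely what forces the $ij$- and $ik$-orientations imposed at the two ends of each $\partial D_i$ --- one end prescribed by $v_0$, the other by $v_1$, and the side arcs by the product cobordism --- to close up around $\partial D_i$, so that each bicolored surface $P_{ij}=D_i\cup D_j$ acquires a coherent orientation. (Were $v_0$ and $v_1$ of opposite type this would fail, and one would be forced instead into a triband as in Definition~\ref{defn:triband}.)

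Finally I would set $\vv{F}=\big((\vv{\Gamma}\times[0,1])\setminus N\big)\cup P$ and check the required properties: it is a properly embedded totally oriented Klein foam in $S^3\times[0,1]$ with $\partial\vv{F}=\vv{\Gamma}\cup-\mir{\vv{\Gamma}}$; all of its seam edges --- the unchanged arcs $\{v\}\times[0,1]$ for $v\neq v_0,v_1$ together with $\gamma_0$ and $\gamma_1$ --- have one endpoint on each end, so $\vv{F}$ has no seam vertices and is a seamed foamy cobordism from $\vv{\Gamma}$ to itself; and the permutation it induces carries $v_0$ at the bottom to $v_1$ at the top along $\gamma_0$, carries $v_1$ to $v_0$ along $\gamma_1$, and fixes every other vertex, i.e.\ it is the transposition $(v_0\,v_1)$. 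The step I expect to be the real obstacle is the orientation-coherence check in the second paragraph --- verifying that the double orientations around $\partial D_r,\partial D_g,\partial D_b$ close up --- since that is exactly where the same-orientation-type hypothesis must be invoked and where one must carefully track the reversal of the ambient $S^3$-orientation between the bottom and top of the cobordism; the coloring and embeddedness conditions, by contrast, should be routine.
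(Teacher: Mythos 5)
Your proposal is correct and follows essentially the same strategy as the paper: start from the product cobordism $\vv{\Gamma}\times I$, excise a standard region around the two seams, and glue back in a copy of (trivial theta)$\times I$ whose two seam arcs cross-connect $v_0$ and $v_1$, with the same-orientation-type hypothesis entering exactly where you say it must, to make the replacement piece totally oriented compatibly with the rest of the foam. The only (cosmetic) difference is that the paper performs the surgery in the interior of $S^3\times I$ --- tubing together neighborhoods of interior points of the two seam edges and observing that the links of those points are trivial totally oriented $\thetan$-curves that are isomorphic because $v_0$ and $v_1$ have the same type (and the trivial $\thetan$ is isotopic to its reverse) --- whereas your excised region runs out to both boundary components; this packages the orientation-coherence check you flag as the crux into the statement that the two link $\thetan$-curves are isomorphic as totally oriented Klein graphs.
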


\begin{proof}
    Begin with the identity cobordism $\vv{F} = \vv{\Gamma} \times I$ in $S^3 \times I$.  Let $s_i$ be the seam edge $v_i \times I$, and choose a point $p_i$ in the interior of $s_i$.
    Observe that the boundary $S_i=\bdry N(p_i)$ of a small regular neighborhood $N(p_i)$ is a $3$-sphere that meets $\vv{F}$ in a trivial $\thetan$-curve $\vv{\thetan}_i$ with vertices $S_i \cap s_i$.  The total orientation on $\vv{\thetan}_i$ is induced by the foam $\vv{F} \cap N(p_i)$ in the $4$-ball $N(p_i)$ so that the vertex $v_i^+$ of $S_i \cap s_i$ with higher $I$--coordinate has the same orientation type as $v_i$ while the other $v_i^-$ has the opposite orientation type.  
    
    Now choose a simple arc $\phi$ in $S^3 \times I$ connecting $p_0$ to $p_1$ that is otherwise disjoint from $F$.   Then the ball $N(\phi)$ is the tubing of the two balls $N(p_0)$ and $N(p_1)$ and its boundary $S = \bdry N(\phi)$ is the connected sum of $S_0$ and $S_1$ that meets $F$ in the split graph  $\vv{\thetan}_0 \sqcup \vv{\thetan}_1$.  (The total orientation is still induced from $\vv{F} \cap N(\phi) = \vv{F} \cap (N(p_0) \cup N(p_1))$.) Since $v_0$ and $v_1$ have the same orientation type, $\vv{\thetan}_0$ and $\vv{\thetan}_1$ are isomorphic as totally oriented Klein graphs.  Furthermore, because $\vv{\thetan}_0$ is a trivial $\thetan$-curve, it is isotopic to its own reverse.  Because they are embedded as a split graph, there is a totally oriented Klein foam $\vv{F_\thetan}$ homeomorphic to $\vv{\thetan}_0 \times I$ and embedded in $S$ with $\bdry \vv{F_\theta} = \vv{\thetan}_0 \sqcup \vv{\thetan}_1$ so that the seams of $\vv{F}_\thetan$ connect $v_0^+$ to $v_1^-$ and $v_0^-$ to $v_1^+$.   Replacing $\vv{F} \cap N(\phi)$ with $\vv{F}_\theta$ produces a new totally oriented Klein foam $\vv{F}'$ in which $s_0$ and $s_1$ have been replaced by seams that connect one end of $v_0 \times \bdry I$ to the other end of $v_1 \times \bdry I$ and vice-versa, leaving all other seams as they were and producing the desired permutation.
\end{proof}

\begin{lemma}
\label{lem:seambetweenvtxes}
    Suppose there is a seamed foamy cobordism from $\vv{\Gamma}_0$ to $\vv{\Gamma}_1$.
    Then for any pair of vertices $v_i$ of $\vv{\Gamma}_i$ with the same orientation type, there exists a seamed foamy cobordism from $\vv{\Gamma}_0$ to $\vv{\Gamma}_1$ with seam edge connecting $v_0$ to $v_1$.
\end{lemma}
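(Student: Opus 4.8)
The plan is to reduce to Lemma~\ref{lem:seamtransposition} by vertical stacking of cobordisms. Let $\vv{F}$ be a totally oriented seamed foamy cobordism from $\vv{\Gamma}_0$ to $\vv{\Gamma}_1$. Since $\vv{F}$ has no seam vertices, $s(F)$ is a $1$-manifold, and by definition each of its edge components runs from a vertex of $\Gamma_0$ to a vertex of $\Gamma_1$; since each vertex of $\Gamma_i$ is a univalent vertex of $s(F)$, this sets up a bijection $\pi\colon V(\Gamma_0) \to V(\Gamma_1)$ carrying the $\Gamma_0$-endpoint of a seam edge to its $\Gamma_1$-endpoint. The key input I would need is that $\pi$ \emph{preserves orientation type}: whenever a seam edge $\gamma$ joins $v \in V(\Gamma_0)$ to $\pi(v) \in V(\Gamma_1)$, the vertices $v$ and $\pi(v)$ have the same orientation type. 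This is exactly the local orientation computation already carried out inside the proof of Lemma~\ref{lem:seamtransposition} (there used to check that the replacement foam $\vv{F_\thetan}$ glues up correctly), now applied to the seam edges of $\vv{F}$: cutting out a small ball about an interior point of $\gamma$ exhibits a trivial totally oriented $\thetan$-curve whose two vertices are of opposite type by Lemma~\ref{lemma:vertexorientations}, and chasing these types along $\gamma$ to its two ends -- one lying in the level-$0$ boundary and one in the level-$1$ boundary, where the total orientation is reversed -- shows that the intrinsic types of $v$ in $\vv{\Gamma}_0$ and of $\pi(v)$ in $\vv{\Gamma}_1$ coincide.

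Granting this, the argument is short. If $\pi(v_0) = v_1$, then $\vv{F}$ already contains the desired seam edge. Otherwise set $u := \pi^{-1}(v_1)$, so $u \neq v_0$; by type preservation $u$ has the same orientation type as $v_1$, hence (by hypothesis) the same orientation type as $v_0$. By Lemma~\ref{lem:seamtransposition} there is a totally oriented seamed foamy cobordism $T$ from $\vv{\Gamma}_0$ to itself whose induced vertex permutation is the transposition exchanging $v_0$ and $u$. Stack $T$ below $\vv{F}$ inside $S^3 \times [0,1]$: the total orientations glue because they agree on the shared copy of $\vv{\Gamma}_0$, no seam vertices are created, and the seam edges of the two pieces concatenate into edges each still running from the bottom boundary to the top boundary, so $\vv{F} \cup_{\vv{\Gamma}_0} T$ is again a (totally oriented) seamed foamy cobordism from $\vv{\Gamma}_0$ to $\vv{\Gamma}_1$. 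Its induced bijection is $\pi$ composed with the transposition of $v_0$ and $u$, which sends $v_0 \mapsto u \mapsto v_1$; hence $\vv{F} \cup_{\vv{\Gamma}_0} T$ has a seam edge joining $v_0$ to $v_1$, as required.

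The step I expect to be the main obstacle is the type-preservation property of $\pi$. It is the one ingredient that is not purely formal, and while it is essentially ``the same computation'' that already appears inside Lemma~\ref{lem:seamtransposition}, one must keep careful track of the fact that the orientation type is reversed on passing from the level-$0$ to the level-$1$ end of a seam edge, so that ``same orientation type'' is asserted correctly for the intrinsic types of $v_0$ in $\vv{\Gamma}_0$ and $v_1$ in $\vv{\Gamma}_1$. Once this is isolated (it could be stated as a short preliminary lemma), everything else -- the existence of transposition cobordisms from Lemma~\ref{lem:seamtransposition}, and the stability of the class of seamed foamy cobordisms under vertical stacking -- is routine.
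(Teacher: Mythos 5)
Your proof is correct and follows essentially the same route as the paper: the paper's proof is a one-line application of Lemma~\ref{lem:seamtransposition}, stacking a transposition cobordism onto the given seamed foamy cobordism (at the $\vv{\Gamma}_1$ end rather than the $\vv{\Gamma}_0$ end, an immaterial difference). The type-preservation property of the seam-induced vertex bijection that you isolate as the key input is indeed needed for either version of the argument, but is left implicit in the paper.
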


\begin{proof} 
This follows after applying Lemma~\ref{lem:seamtransposition} to construct a seamed foamy cobordism from $\vv{\Gamma_1}$ to itself yielding a permutation of the vertices $v_0$ and $v_1$. 
\end{proof}

\begin{lemma}
\label{lem:seamedfoamycobord_implies_slicefoam}
    Suppose $\vv{\Gamma}_0$ and $\vv{\Gamma}_1$ are totally oriented Klein graphs.
    For any given pair of vertices $v_i$ of $\vv{\Gamma}_i$ with the same orientation type, form the connected sum $\vv{\Gamma_1} \#_3 -\mir{\vv{\Gamma}_0}$ along $v_0$ and $v_1$. 
    If there is a seamed foamy cobordism between $\vv{\Gamma}_0$ and $\vv{\Gamma}_1$, 
    then there is a slice foam for $\vv{\Gamma_1} \#_3 -\mir{\vv{\Gamma}_0}$. 
\end{lemma}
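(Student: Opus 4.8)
The plan is to run the given cobordism through two earlier results in sequence: first Lemma~\ref{lem:seambetweenvtxes}, to arrange that some seam edge joins $v_0$ directly to $v_1$, and then the (unlabeled) lemma opening Section~\ref{subsec:cobordisms}, which cuts a totally oriented foamy cobordism open along a spanning seam edge and collapses $S^3\times[0,1]$ to a $4$-ball.

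In detail, I regard the hypothesized seamed foamy cobordism as running from $\vv{\Gamma}_0$ to $\vv{\Gamma}_1$. Since $v_0$ and $v_1$ are assumed to have the same orientation type, Lemma~\ref{lem:seambetweenvtxes} furnishes a seamed foamy cobordism $\vv{F}$ from $\vv{\Gamma}_0$ to $\vv{\Gamma}_1$ containing a seam edge $s$ whose endpoints are exactly $v_0\in\vv{\Gamma}_0$ and $v_1\in\vv{\Gamma}_1$. A seamed foamy cobordism has no seam vertices, so $s$ is a spanning seam edge in the sense required by the opening lemma of Section~\ref{subsec:cobordisms}. Applying that lemma with this $\vv{F}$ and this $s$, the foam $\vv{F}\cut N(s)$ is a totally oriented Klein foam properly embedded in the $4$-ball $(S^3\times[0,1])\cut N(s)$, with boundary $\vv{\Gamma_1}\#_3 -\mir{\vv{\Gamma_0}}$, the vertex connected sum being taken along $v_0$ and $v_1$ — precisely the Klein graph named in the statement. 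By the identification in Definition~\ref{defn:foamycobordism}, a totally orientable Klein foam properly embedded in a $B^4$ with a single nonempty Klein graph as its boundary is a slice foam (in fact a spanning foam here, since $\vv{F}$ had no seam vertices and cutting out $N(s)$ creates none), which is what we want. If one additionally wanted the slice foam to have no bicolored spheres, Lemma~\ref{lem:existenceofTO+noBCS2} removes them without altering the boundary.

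I do not expect a genuine obstacle here. The only point needing care is the bookkeeping of mirroring and orientation reversal for the boundary Klein graph, together with the claim that the total orientation survives both the rerouting of the seam and the cut along $s$; but these are exactly the assertions already recorded in Lemma~\ref{lem:seambetweenvtxes} and in the opening lemma of Section~\ref{subsec:cobordisms}, so the proof amounts to invoking those two lemmas in turn.
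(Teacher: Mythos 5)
Your proposal is correct and matches the paper's proof: the paper likewise invokes Lemma~\ref{lem:seambetweenvtxes} to obtain a seamed foamy cobordism with a seam edge $e$ joining $v_0$ to $v_1$, and then observes that $\vv{F}\cut\nbhd(e)$ is the desired slice foam, which is exactly the content of the unlabeled lemma you cite. No gaps.
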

\begin{proof}
        This proof essentially follows the analogous construction of a slice disk for $K_1 \# -\mir{K_0}$ from a concordance between $K_0$ and $K_1$.
        First, by Lemma~\ref{lem:seambetweenvtxes}, there exists a seamed foamy  cobordism $\vv{F}$ from $\vv{\Gamma}_0$ to $\vv{\Gamma}_1$ with a seam edge $e$ connecting $v_0$ to $v_1$.  Then $\vv{F}\cut\nbhd(e)$ is then a slice foam for $\vv{\Gamma_1} \#_3 -\mir{\vv{\Gamma}_0}$.
\end{proof}

\section{Bounds}
\label{sec:results}
In this section, we prove Theorem \ref{thm:main}, bounding the orbifold Euler characteristic of the graph $\Gamma_1\#_3-\mir{\Gamma_2}$ below by the signature difference, and above by the Klein Gordian distance between $\Gamma_1$ and $\Gamma_2$. 

\begin{theorem}
\label{thm:main}
Suppose that $\vv{\Gamma_1}$ and $\vv{\Gamma_2}$ are totally oriented Klein graphs that are related by a sequence of crossing changes. 
Set $V:=|V(\Gamma_1)|=|V(\Gamma_2)|$ and $\mu:= \mu(\Gamma_1)=\mu(\Gamma_2)$. Then:

\begin{align*}
 |\sigma(\vv{\Gamma_1})-\sigma(\vv{{\Gamma_2}})| - \beta(\vv{\Gamma_1})-\beta(\vv{{\Gamma_2}}) +4\mu  - 12 
 &\labelrel\leq{ineq:a} 
 5- 4 \chi^{orb}_4(\vv{\Gamma_1} \#_3 -\mir{\vv{\Gamma_2}}) -2V \\
 &\labelrel\leq{ineq:b} 
 - 4 \chi^{orb}_4(\vv{\Gamma_1}, \vv{\Gamma_2}; s) -2V\\
 &\labelrel\leq{ineq:c} 
 4d_{\Klein}(\vv{\Gamma_1}, \vv{\Gamma_2})    
\end{align*}
where $\vv{\Gamma_1} \#_3 -\mir{\vv{\Gamma_2}}$ is any vertex connected sum of $\vv{\Gamma_1}$ and $-\mir{\vv{\Gamma_2}}$ compatible with the total orientations.
\end{theorem}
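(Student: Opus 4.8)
The plan is to establish the three labeled inequalities separately, since each corresponds to a different geometric input already assembled in the excerpt. For \eqref{ineq:c}, I would invoke Theorem~\ref{chi-orb s-m}: a sequence realizing $d_{\Klein}(\vv{\Gamma_1},\vv{\Gamma_2})$ with $s$ same-colored and $m$ mixed-colored crossing changes (so $s + m/2 = d_{\Klein}$) produces a totally orientable seamed foamy cobordism $\vv F$ between $\vv{\Gamma_1}$ and $\vv{\Gamma_2}$ with no bicolored spheres and $\chi^{orb}(\vv F) = -V/2 - (s + m/2) = -V/2 - d_{\Klein}$. Since $\chi^{orb}_4(\vv{\Gamma_1},\vv{\Gamma_2};s)$ is the maximum of $\chi^{orb}$ over all such cobordisms (Definition~\ref{defn:chi-orb}), we get $\chi^{orb}_4(\vv{\Gamma_1},\vv{\Gamma_2};s) \ge -V/2 - d_{\Klein}$, i.e. $-4\chi^{orb}_4(\vv{\Gamma_1},\vv{\Gamma_2};s) - 2V \le 4d_{\Klein}$, which is \eqref{ineq:c}. (One should note the $|\sv|$-minimality hypothesis in the definition of $\chi^{orb}_4(\vv{\Gamma},-)$ is not present for the seamed cobordism characteristic, so Theorem~\ref{chi-orb s-m} applies directly.)

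For \eqref{ineq:b}, I would pass from a seamed foamy cobordism $G$ between $\vv{\Gamma_1}$ and $\vv{\Gamma_2}$ to a slice foam for the vertex sum $\vv{\Gamma_1} \#_3 -\mir{\vv{\Gamma_2}}$ via Lemma~\ref{lem:seamedfoamycobord_implies_slicefoam}: removing a neighborhood of a spanning seam edge $e$ connecting $v_0$ to $v_1$ turns $G$ into a slice foam $G\cut\nbhd(e)$ for $\vv{\Gamma_1} \#_3 -\mir{\vv{\Gamma_2}}$ (and we may arrange no bicolored spheres, using Lemma~\ref{lem:existenceofTO+noBCS2}). Deleting $\nbhd(e)$ removes one seam edge from $s(G)$, so it changes $\chi(s(\cdot))$ by $-1$ (the edge $e$ has $\chi = 1$, but its endpoints $v_0,v_1$ were vertices of $\bdry G$, not interior vertices of $s(G)$, and they become endpoints of the three boundary edges of the summed vertex) — more carefully, $\chi(G\cut\nbhd(e)) = \chi(G)$ and $\chi(s(G\cut\nbhd(e))) = \chi(s(G)) - 1$, so $\chi^{orb}$ increases by exactly $1/4$; hence $4\chi^{orb}(G\cut\nbhd(e)) = 4\chi^{orb}(G) + 1$. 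Taking $G$ to be the maximizer and then bounding $\chi^{orb}(G\cut\nbhd(e)) \le \chi^{orb}_4(\vv{\Gamma_1}\#_3 -\mir{\vv{\Gamma_2}})$ gives $4\chi^{orb}_4(\vv{\Gamma_1},\vv{\Gamma_2};s) + 1 \le 4\chi^{orb}_4(\vv{\Gamma_1}\#_3 -\mir{\vv{\Gamma_2}})$, which rearranges to \eqref{ineq:b}. The one subtlety is matching the seam-vertex counts: the slice foam produced has $|\sv|$ seam vertices of $\bdry(G\cut\nbhd(e))$ only if $G$ was chosen with that minimal count, which by Lemma~\ref{lem:sv-combinatorics} can be arranged since $\sv$ is additive under $\#_3$ (Lemma~\ref{lem:sv-additive}) and the seam edge $e$ carries no interior seam vertex.

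The heart of the argument, and the step I expect to be the main obstacle, is \eqref{ineq:a}: the signature lower bound on $\chi^{orb}_4$ of the vertex sum. Here I would apply Theorem~\ref{thm:GammaSig} to the Klein graph $\vv\Gamma := \vv{\Gamma_1}\#_3 -\mir{\vv{\Gamma_2}}$, which has $2V$ vertices and $\mu(\vv\Gamma) = \mu(\Gamma_1) + \mu(\Gamma_2) - 3 = 2\mu - 3$ by Lemma~\ref{lem:invts_of_sums}, nullity $\beta(\vv\Gamma) = \beta(\vv{\Gamma_1}) + \beta(\vv{\Gamma_2})$, and signature $\sigma(\vv\Gamma) = \sigma(\vv{\Gamma_1}) - \sigma(\vv{\Gamma_2})$ — the last from additivity of $\sigma$ over bicolored links (Corollary~\ref{cor:signatureofTOKline}) together with the behavior of $\sigma$ under reversal and mirroring (reversal fixes $\sigma$, mirroring negates it, as collected in the appendix). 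I also need $\sv(\vv\Gamma) = \sv(\vv{\Gamma_1}) - \sv(\vv{\Gamma_2})$, but crucially when the two graphs are \emph{related by crossing changes} they cobound a seamed foamy cobordism (after also routing through Theorem~\ref{chi-orb s-m}), which forces $\sv(\vv{\Gamma_1}) = \sv(\vv{\Gamma_2})$ by Lemma~\ref{lem:sv-combinatorics}, hence $\sv(\vv\Gamma) = 0$ and the $2|\sv|$ term in Theorem~\ref{thm:GammaSig} drops. Substituting $|V| = 2V$, $\mu(\Gamma) - 3 = 2\mu - 6$, $\sv = 0$, and the signature/nullity identities into the bound
\[
|\sigma(\vv\Gamma)| \le 3 - |V(\Gamma)| + 2|\sv(\vv\Gamma)| - 4\chi^{orb}_4(\vv\Gamma) - 2(\mu(\Gamma) - 3) + \beta(\vv\Gamma)
\]
yields
\[
|\sigma(\vv{\Gamma_1}) - \sigma(\vv{\Gamma_2})| \le 3 - 2V - 4\chi^{orb}_4(\vv{\Gamma_1}\#_3 -\mir{\vv{\Gamma_2}}) - 2(2\mu - 6) + \beta(\vv{\Gamma_1}) + \beta(\vv{\Gamma_2}),
\]
and moving the $-\beta - \beta + 4\mu - 12$ to the left and the rest around reproduces exactly \eqref{ineq:a}. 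The delicate points are (i) verifying the appendix identities for $\sigma$ under the combined reverse-mirror operation $-\mir{(\cdot)}$, and (ii) justifying $\sv(\vv{\Gamma_1}) = \sv(\vv{\Gamma_2})$ rigorously, i.e. that being related by crossing changes really does put the two graphs in the same seamed-cobordism class — this is where Theorem~\ref{chi-orb s-m} feeds back into \eqref{ineq:a} as well, making the three inequalities less independent than they first appear.
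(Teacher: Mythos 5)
Your overall strategy is exactly the paper's: inequality (c) via Theorem~\ref{chi-orb s-m}, inequality (b) by excising a neighborhood of a spanning seam edge to convert a seamed foamy cobordism into a slice foam for the vertex sum (the paper's Lemma~\ref{lem:slicecharcobordisms}), and inequality (a) by applying Theorem~\ref{thm:GammaSig} to $\vv{\Gamma_1}\#_3-\mir{\vv{\Gamma_2}}$ together with the additivity/mirror/reversal identities and the observation that $\sv(\vv{\Gamma_1})=\sv(\vv{\Gamma_2})$ (which holds for the simplest reason: crossing changes do not touch vertices). Your treatment of (c) is correct. However, both (a) and (b) contain Euler-characteristic miscounts, and the one in (b) genuinely breaks the argument.

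In (b) you assert $\chi(G\cut\nbhd(e))=\chi(G)$. This is false: $G$ is the union of $G\cut\nbhd(e)$ with $\YGraph\times I$ glued along $\partial\YGraph\times I$ (three arcs), so $\chi(G)=\chi(G\cut\nbhd(e))+1-3$, i.e.\ $\chi(G\cut\nbhd(e))=\chi(G)+2$. Combined with $\chi(s(\cdot))$ dropping by $1$, the orbifold Euler characteristic increases by $\tfrac12\cdot 2+\tfrac14=\tfrac54$, not $\tfrac14$. Your version yields only $4\chi^{orb}_4(\vv{\Gamma_1},\vv{\Gamma_2};s)+1\le 4\chi^{orb}_4(\vv{\Gamma_1}\#_3-\mir{\vv{\Gamma_2}})$, hence $1-4\chi^{orb}_4(\vv{\Gamma_1}\#_3-\mir{\vv{\Gamma_2}})-2V\le -4\chi^{orb}_4(\vv{\Gamma_1},\vv{\Gamma_2};s)-2V$, which is strictly weaker than the stated (b): that inequality needs the constant $5$, i.e.\ the full $\tfrac54$ shift. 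In (a) you substitute $|V(\vv{\Gamma_1}\#_3-\mir{\vv{\Gamma_2}})|=2V$, but a vertex connected sum deletes one vertex from each summand, so the count is $2V-2$. With your count the substitution into Theorem~\ref{thm:GammaSig} produces $3-2V-\cdots$ on the right rather than the $5-2V-\cdots$ of (a), so your claim that the computation ``reproduces exactly (a)'' is inconsistent with your own arithmetic. Both errors are repaired by the correct counts, after which your argument coincides with the paper's.
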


\begin{proof}
    In subsection~\ref{sec:ab}, 
    Corollary~\ref{two ended cobordism} proves inequality  \eqref{ineq:a} since both graphs will have the same signed vertex count $\sv$ and Lemma \ref{lem:slicecharcobordisms} establishes inequality \eqref{ineq:b}. In subsection \ref{sec:cd} we prove inequality \eqref{ineq:c}. The proof then follows.
\end{proof}

\begin{proof}[Proof of Corollary \ref{cor:sigsliceunknot}]
Let $\vv{\Gamma_1}$ be $\thetan$ with any Klein coloring and total orientation.  Let $\vv{\Gamma_2}$ be a totally oriented and Klein colored trivial theta with the same vertex types as $\vv{\Gamma_1}$.  Then $\vv{\Gamma_1}\#_3-\mir{\vv{\Gamma_2}} = \vv{\Gamma_1}$.  

We now apply Theorem \ref{thm:main}. Observe that the signature of the trivial theta vanishes,
and for any totally oriented, Klein colored theta curve we have $\beta=0$, $4\mu-12=0$, and $5-2V = 1$.  Finally, $d_{\Klein}(\vv{\Gamma_1}, \vv{\Gamma_2})$ is $u_{\Klein}(\thetan)$ since unknotting a theta curve is insensitive to total orientation and any change of Klein coloring preserves the same and mixed crossing types.  So, inequalities \eqref{ineq:a} and \eqref{ineq:c} give the desired result.  
\end{proof}

\begin{remark}\label{rem:immediatesigunknotting}
The inequality $\frac14 |\sigma(\thetan)| \leq  u_{\Klein}(\thetan)$ for $\thetan$-curves from Corollary \ref{cor:sigsliceunknot} (without the slice orbifold Euler characteristic),  can be obtained by careful application of Theorem \ref{thm:signatureforconstituentknots}, and $\frac12 |\sigma(K)| \leq  u(K)$ for a knot $K$.
\end{remark}

We begin by exploring inequalities relating signature to orbifold Euler characteristic and then establish the inequalities relating orbifold Euler characteristic to sequences of crossing changes.

\subsection{Bounding signature}\label{sec:ab}
To establish the upper bound on the signature of a Klein graph, we use the corresponding result for knots. To establish the upper bound on the difference of signatures of two Klein graphs, we use properties of the vertex sum.

Setting $z=-1$ in \cite[Theorem 1.4]{powell}, Powell shows that $|\sigma(\vv{L})| + \mu(L)-1 - \beta(\vv{L}) \leq 2g_4(\vv{L})$ so that
\begin{align*}
    |\sigma(\vv{L})| &\leq 2g_4(\vv{L}) -  \mu(L)+ 1 + \beta(\vv{L}) \\
            &= (2g_4(\vv{L}) +\mu(L)- 2) -  2\mu(L)+ 3 + \beta(\vv{L}) \\
            &= 1-\chi_4(\vv{L}) -  2(\mu(L)-1) + \beta(\vv{L}).
\end{align*}

\begin{theorem}
\label{thm:GammaSig}
For a totally oriented Klein graph $\vv{\Gamma} \subset S^3$, we have 
\[ |\sigma(\vv{\Gamma})| \leq 3-|V(\Gamma)|+2|\sv(\vv{\Gamma})|-4\chi^{orb}_4(\vv{\Gamma}) -2(\mu(\Gamma)-3) + \beta(\vv{\Gamma}).\]
\end{theorem}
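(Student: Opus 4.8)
The plan is to reduce the inequality about the totally oriented Klein graph $\vv{\Gamma}$ to the corresponding inequality about its three bicolored links, using the additivity established in Corollary~\ref{cor:signatureofTOKline} and the definition of $\beta$ and $\mu$ as sums over bicolored links. By Corollary~\ref{cor:signatureofTOKline}, $\sigma(\vv{\Gamma}) = \sigma(\vv{\Gamma}_{rb})+\sigma(\vv{\Gamma}_{bg})+\sigma(\vv{\Gamma}_{rg})$, so by the triangle inequality it suffices to control $|\sigma(\vv{\Gamma}_{rb})| + |\sigma(\vv{\Gamma}_{bg})| + |\sigma(\vv{\Gamma}_{rg})|$. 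The Powell bound quoted just before the theorem gives, for each bicolored link $\vv{L}=\vv{\Gamma}_{ij}$,
\[
|\sigma(\vv{\Gamma}_{ij})| \leq 1 - \chi_4(\vv{\Gamma}_{ij}) - 2(\mu(\Gamma_{ij})-1) + \beta(\vv{\Gamma}_{ij}).
\]
Summing over the three pairs $\{i,j\}$ yields
\[
|\sigma(\vv{\Gamma})| \leq 3 - \bigl(\chi_4(\vv{\Gamma}_{rb})+\chi_4(\vv{\Gamma}_{bg})+\chi_4(\vv{\Gamma}_{rg})\bigr) - 2(\mu(\Gamma)-3) + \beta(\vv{\Gamma}),
\]
so the crux is to relate the quantity $\chi_4(\vv{\Gamma}_{rb})+\chi_4(\vv{\Gamma}_{bg})+\chi_4(\vv{\Gamma}_{rg})$ to $4\chi^{orb}_4(\vv{\Gamma})$, $|V(\Gamma)|$, and $|\sv(\vv{\Gamma})|$.

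The key geometric step is to build, from an optimal totally oriented slice foam $F$ for $\vv{\Gamma}$ realizing $\chi^{orb}_4(\vv{\Gamma})$ and having exactly $|\sv|(\vv{\Gamma})$ seam vertices, three properly embedded oriented surfaces in $B^4$ bounding the bicolored links. The natural candidate is to take the bicolored surfaces $F_{rb}, F_{bg}, F_{rg}$ themselves; each is a compact oriented surface (using the total orientation) bounded by the corresponding bicolored link. The Euler characteristic bookkeeping is then a variant of Lemma~\ref{lem:chiorbcounts}: with $v := |\sv|(\vv{\Gamma})$ seam vertices, $s(F)$ is no longer a 1-manifold, and a careful count shows $\chi(s(F)) = \tfrac12|V(\Gamma)| + v$ (each seam vertex contributes an extra $+1$ over the seam-vertex-free case, since it is a point where four seam half-edges meet — compare Example~\ref{exa:coveroftet} and Remark~\ref{rem:orbchi}(3), where adding a suspended tetrahedron with two seam vertices changes $\chi^{orb}$ by $1$, i.e. changes $2\chi(F)-\chi(s(F))$ by $4$). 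Then
\[
4\chi^{orb}(F) = 2\chi(F) - \chi(s(F)) = \bigl(\chi(F_{rb})+\chi(F_{bg})+\chi(F_{rg})\bigr) - |V(\Gamma)| - 2v.
\]
Since $\chi_4(\vv{\Gamma}_{ij}) \geq \chi(F_{ij})$ (the bicolored surface is one competitor, after pushing into $B^4$ and discarding any sphere components — here we use that $F$ has no bicolored spheres), we get $\chi_4(\vv{\Gamma}_{rb})+\chi_4(\vv{\Gamma}_{bg})+\chi_4(\vv{\Gamma}_{rg}) \geq 4\chi^{orb}_4(\vv{\Gamma}) + |V(\Gamma)| + 2v$. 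Substituting into the summed Powell bound gives exactly
\[
|\sigma(\vv{\Gamma})| \leq 3 - 4\chi^{orb}_4(\vv{\Gamma}) - |V(\Gamma)| - 2v - 2(\mu(\Gamma)-3) + \beta(\vv{\Gamma}),
\]
and since $v = |\sv(\vv{\Gamma})|$ and the theorem has $+2|\sv(\vv{\Gamma})|$ rather than $-2|\sv(\vv{\Gamma})|$, I must instead use a slice foam with the \emph{maximal} relevant $\chi^{orb}$ but with the sign working the other way — more precisely, one bounds $\chi_4(\vv{\Gamma}_{ij})$ from \emph{above} is not what is wanted; rather one needs a slice foam whose bicolored surfaces are \emph{sub}optimal-Euler-characteristic Seifert-type pieces, giving $\chi_4(\vv{\Gamma}_{ij}) \le \chi(F_{ij}) + (\text{correction})$. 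I would resolve the sign by the reverse construction: start from optimal oriented surfaces $S_{ij}$ for the three bicolored links, assemble them into a foam by introducing $|\sv(\vv{\Gamma})|$ seam vertices (as forced by Lemma~\ref{lem:sv-combinatorics}) plus the $\tfrac12|V(\Gamma)|$ ordinary seam edges, and read off that $4\chi^{orb}_4(\vv{\Gamma}) \ge \sum_{ij}\chi_4(\vv{\Gamma}_{ij}) - |V(\Gamma)| - 2|\sv(\vv{\Gamma})|$; this assembly respects the total orientation by the gluing used in the proof of Theorem~\ref{thm:doKleinboundstofoamwithseamvertices}.

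The main obstacle I anticipate is precisely this Euler-characteristic accounting with seam vertices and getting the sign of the $2|\sv|$ term right: the bicolored surfaces of a foam with $v$ seam vertices are glued to each other along a 1-complex $s(F)$ that is no longer a 1-manifold, and I need the exact relation $\chi(s(F)) = \tfrac12|V(\Gamma)| + v$ together with the direction of the inequality between $\sum_{ij}\chi_4(\vv{\Gamma}_{ij})$ and $4\chi^{orb}_4(\vv{\Gamma})$. The clean way to pin it down is to invoke Lemma~\ref{lem:sv-combinatorics} to produce a foam with exactly $|\sv(\vv{\Gamma})|$ seam vertices whose bicolored surfaces can be taken (after tubing, using Lemma~\ref{lem:existenceofTO+noBCS2} to avoid bicolored spheres) to be genus-minimizing for the bicolored links, so that $\chi^{orb}$ of that foam realizes the right-hand side; this shows $\chi^{orb}_4(\vv{\Gamma})$ is at least that value, which after rearrangement and the triangle-inequality-plus-Powell estimate yields the stated bound. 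A secondary check is that reversing the total orientation, which negates $\sv$ but leaves $|\sv|$, $\sigma$ up to sign, $\beta$, $\mu$, $V$ unchanged (Lemmas~\ref{lem:invts_of_rev_mir} and~\ref{lem:sv-additive}), so the inequality is insensitive to the choices made.
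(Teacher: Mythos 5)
Your overall strategy is the paper's: additivity of $\sigma$ over bicolored links, the triangle inequality, Powell's bound for each $\vv{\Gamma}_{ij}$, and then trading $-\sum_{ij}\chi_4(\vv{\Gamma}_{ij})$ for $-4\chi^{orb}_4(\vv{\Gamma})$ by using the bicolored surfaces of an optimal slice foam $F$ as competitors for the links' slice Euler characteristics. But there is a genuine error in your Euler-characteristic bookkeeping for $s(F)$, and your attempted repair makes things worse rather than better. A seam vertex is $4$-valent in $s(F)$ (its link in the foam is the tetrahedral graph, which has four vertices), so with $v=|\sv|(\vv{\Gamma})$ seam vertices and $|V|$ boundary vertices, $s(F)$ has $|V|+v$ vertices and $(|V|+4v)/2$ edges, giving $\chi(s(F))=\tfrac12|V|-v$, not $\tfrac12|V|+v$ as you claim: a $4$-valent vertex contributes $1-\tfrac{4}{2}=-1$ to $\chi$, not $+1$. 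Feeding the correct value into $4\chi^{orb}(F)=2\chi(F)-\chi(s(F))=\bigl(\sum_{ij}\chi(F_{ij})\bigr)-2\chi(s(F))$ yields $4\chi^{orb}(F)=\sum_{ij}\chi(F_{ij})-|V|+2v$, hence $-\sum_{ij}\chi_4(\vv{\Gamma}_{ij})\leq-\sum_{ij}\chi(F_{ij})=-4\chi^{orb}_4(\vv{\Gamma})-|V|+2|\sv(\vv{\Gamma})|$, and substituting into your summed Powell bound gives the theorem with the $+2|\sv|$ term coming out correctly. No further construction is needed.

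The ``reverse construction'' you propose to fix the sign cannot work, for two reasons. First, three independently chosen $\chi_4$-realizing slice surfaces for the three bicolored links cannot in general be assembled into a foam: the surfaces $F_{ij}$ and $F_{ik}$ of a foam share their entire $i$-colored part, so the three surfaces are far from independent choices. Second, even granting such an assembly, it would establish $4\chi^{orb}_4(\vv{\Gamma})\geq\sum_{ij}\chi_4(\vv{\Gamma}_{ij})-(\cdots)$, which is an \emph{upper} bound on $\sum_{ij}\chi_4(\vv{\Gamma}_{ij})$; the proof requires a \emph{lower} bound on that sum (equivalently, an upper bound on $-\sum_{ij}\chi_4$), which is exactly what the forward direction --- the foam's bicolored surfaces as competitors --- already supplies. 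The sign discrepancy you detected was a symptom of the arithmetic slip in $\chi(s(F))$, not of having chosen the wrong direction of comparison.
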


\begin{proof}
By the definition of $\chi^{orb}_4$, there exists a totally orientable foamy cobordism $\vv{F}$ for $\vv{\Gamma}$ such that $\vv{F}$ has no bicolored spheres and $\chi^{orb}(\vv{F}) = \chi^{orb}_4(\vv{\Gamma})$.  

By Theorem~\ref{thm:signatureforconstituentknots}:
\begin{align*}
|\sigma(\vv{\Gamma})| &= |\sigma(\vv{\Gamma}_{rb}) + \sigma(\vv{\Gamma}_{bg}) + \sigma(\vv{\Gamma}_{rg})|\\
& \leq |\sigma(\vv{\Gamma}_{rb})| + |\sigma(\vv{\Gamma}_{bg})| + |\sigma(\vv{\Gamma}_{rg})| \\
\intertext{
Since each $\vv{\Gamma}_{ij}$ is an oriented link, and $|\sigma(\vv{L})| \leq 1-\chi_4(\vv{L})- 2(\mu(L)-1) + \beta(\vv{L})$ for any oriented link $\vv{L}$  by \cite[Theorem 1.4]{powell}, then 
}
&\leq 3-(\chi_4(\vv{\Gamma}_{rb})+\chi_4(\vv{\Gamma}_{bg})+\chi_4(\vv{\Gamma}_{rg} )) -2(\mu(\Gamma)-3) + \beta(\vv{\Gamma})
\end{align*}
Since $F$ is totally oriented with no bicolored spheres, each surface $\vv{F}_{ij}$ is oriented and has no sphere components. Then, since 
$-\chi_4(\vv{\Gamma}_{ij}) \leq -\chi(\vv{F}_{ij})$ by definition,

\begin{align*}
    -\left(\chi_4(\vv{\Gamma}_{rb})+\chi_4(\vv{\Gamma}_{bg})+\chi_4(\vv{\Gamma}_{rg})\right)
        & \leq -\left( \chi(F_{rb})+\chi(F_{bg})+\chi(F_{rg}) \right)\\
\intertext{
Since each seam edge lies on all three bicolored surfaces and each monochromatic face lies on two bicolored surfaces: 
}
        \chi(F_{rb})+\chi(F_{bg})+\chi(F_{rg}) &= (2 (\chi(F_r) + \chi(F_b) + \chi(F_g)) - 3\chi(s(F)))\\
        &= 2 (\chi(F) + 2\chi(s(F)))-3\chi(s(F))\\
        &=2\chi(F) + \chi(s(F)) \\
\intertext{By Definition~\ref{defn:chi-orb}, 
$4\chi^{orb}(F) = 2\chi(F) - \chi(s(F))$. So: }
    &= 4\chi^{orb}(F)+2\chi(s(F))\\
\intertext{
As $s(F)$ is a $1$-complex with $|V|\coloneqq|V(\Gamma)|$ boundary vertices and $|\sv|\coloneqq
|\sv(\vv{\Gamma})|$ seam vertices, it has $(|V|+4|\sv|)/2$ edges.  So $2\chi(s(F)) = 2(|V|+|sv|)-(|V|+4|\sv|) = |V|-2|\sv|$.  
 So:}
    & =  4\chi^{orb}(F)+|V|-2|\sv|\\
    &= 4\chi^{orb}_4(\vv{\Gamma})+|V|-2|\sv| 
\end{align*}
Therefore we  have
\begin{align*}
    |\sigma(\vv{\Gamma})| &\leq 3-(4\chi^{orb}_4(\vv{\Gamma})+|V|-2|\sv|)  -2(\mu(\Gamma)-3) + \beta(\vv{\Gamma})\\
    &\leq  3-|V(\Gamma)|+2|\sv|-4\chi^{orb}_4(\Gamma) -2(\mu(\Gamma)-3) + \beta(\vv{\Gamma})
\end{align*}
as claimed.
\end{proof}

\begin{lemma}\label{lem:signatureoperations}
Suppose that $\vv{\Gamma}$, $\vv{\Gamma_1}$, and $\vv{\Gamma_2}$ are totally oriented Klein graphs.  
Then:
\[ \sigma(-\vv{\Gamma}) = \sigma(\vv{\Gamma}) \qquad \mbox{and} \qquad \sigma(\mir\vv{\Gamma}) = -\sigma(\vv{\Gamma})\]

and
 \[\sigma(\vv{\Gamma_1} \#_3 \vv{\Gamma_2}) = \sigma(\vv{\Gamma_1}) + \sigma(\vv{\Gamma_2}) \]
for any vertex sum compatible with the total orientations.
\end{lemma}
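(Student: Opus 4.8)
The plan is to reduce all three identities to the corresponding statements for oriented links by way of Corollary~\ref{cor:signatureofTOKline}, which writes $\sigma(\vv{\Gamma})$ as the sum $\sigma(\vv{\Gamma}_{rb})+\sigma(\vv{\Gamma}_{bg})+\sigma(\vv{\Gamma}_{rg})$ of the signatures of the three oriented bicolored links. Each operation in the lemma acts on $\vv{\Gamma}$ colorwise---it performs the analogous operation on each bicolored link---so the identities follow by summing, over the three color pairs, the well-known behavior of the (oriented) link signature under total orientation reversal, mirroring, and oriented connected sum.

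For reversal and mirroring this is essentially immediate. Total orientation reversal of $\vv{\Gamma}$ reverses the chosen orientation of each bicolored link, so $(-\vv{\Gamma})_{ij}=-(\vv{\Gamma}_{ij})$; since simultaneously reversing all component orientations of an oriented link leaves its signature unchanged (because $\zeta(L)=\sigma(\vv{L})+\lambda(\vv{L})$ is orientation-independent while $\lambda(-\vv{L})=\lambda(\vv{L})$), summing over $ij\in\{rb,bg,rg\}$ gives $\sigma(-\vv{\Gamma})=\sigma(\vv{\Gamma})$. Likewise $(\mir\vv{\Gamma})_{ij}=\mir(\vv{\Gamma}_{ij})$ and mirroring negates link signature, so $\sigma(\mir\vv{\Gamma})=-\sigma(\vv{\Gamma})$. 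One could instead argue directly at the level of Klein graphs using $\sigma(\vv{\Gamma})=\zeta(\Gamma)-\lambda(\vv{\Gamma})$ from Definition~\ref{defn:sigTO}, the behavior of $\zeta$ under mirroring, and Lemma~\ref{lem:invts_of_rev_mir}.

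The content of the vertex-sum identity is the geometric observation that a vertex connected sum restricts colorwise to an ordinary connected sum of links: if $\vv{\Gamma_1}\#_3\vv{\Gamma_2}$ is formed along vertices $v_1,v_2$, then for each pair $\{i,j\}$ one has $(\Gamma_1\#_3\Gamma_2)_{ij}=(\Gamma_1)_{ij}\,\#\,(\Gamma_2)_{ij}$, the connected sum taken at the point $v_1$ on $(\Gamma_1)_{ij}$ and the point $v_2$ on $(\Gamma_2)_{ij}$. Indeed, at $v_i$ exactly one $i$-edge and one $j$-edge are incident, so $(\Gamma_i)_{ij}$ passes through $v_i$ as a single smooth strand; the ball excised around $v_i$ in the definition of $\#_3$ meets $(\Gamma_i)_{ij}$ in one unknotted arc, and the gluing of the two boundary spheres (matching colored triples) is exactly a connected-sum gluing of the two links, with any components of $(\Gamma_i)_{ij}$ not through $v_i$ contributing as a split union. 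Compatibility with the total orientations---which, as in the $\#_3$ conventions, amounts to $v_1$ and $v_2$ having opposite vertex orientation types---is precisely the condition that makes each of these colorwise connected sums orientation-coherent: for each $\{i,j\}$ the $ij$-arc at $v_1$ and at $v_2$ point oppositely relative to the gluing sphere, so the result is a genuine oriented connected sum. Since link signature is additive under oriented connected sum (and split union), $\sigma((\Gamma_1\#_3\Gamma_2)_{ij})=\sigma((\Gamma_1)_{ij})+\sigma((\Gamma_2)_{ij})$ for each pair; summing and applying Corollary~\ref{cor:signatureofTOKline} yields $\sigma(\vv{\Gamma_1}\#_3\vv{\Gamma_2})=\sigma(\vv{\Gamma_1})+\sigma(\vv{\Gamma_2})$.

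I expect the only real work to be the bookkeeping in the last paragraph: verifying carefully that $\#_3$ restricts to a colorwise connected sum of links and, in particular, that the total-orientation-compatibility hypothesis is exactly what forces each such colorwise sum to be orientation-coherent, so that additivity of $\sigma$ applies. Everything else is an appeal to standard facts about link signatures, already in play around Corollary~\ref{cor:signatureofTOKline} and collected in the appendix.
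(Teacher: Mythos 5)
Your proof is correct and follows the same route as the paper: the paper's proof likewise observes that $(\Gamma_1 \#_3 \Gamma_2)_{ij} = (\Gamma_1)_{ij} \# (\Gamma_2)_{ij}$ as oriented links and then invokes Corollary~\ref{cor:signatureofTOKline} together with the standard behavior of link signature under reversal, mirroring, and connected sum. You simply spell out in more detail the colorwise bookkeeping and the role of orientation compatibility that the paper leaves implicit.
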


\begin{proof}
    As oriented links, $(\Gamma_1 \#_3 \Gamma_2)_{ij} = (\Gamma_1)_{ij} \# (\Gamma_2)_{ij}$ for $i,j\in \{r,g,b\}$.
    Then this lemma follows from 
    Corollary~\ref{cor:signatureofTOKline}
    and the fact that signature is unchanged under orientation reversal,  negated under mirroring, and  additive for oriented links (e.g.\ \cite[Proposition 13.12]{BZ}  and  \cite[Proposition 2.13]{CimasoniFlorens}).
\end{proof}

This immediately gives:
\begin{cor}\label{cor:signaturecobordisms}
Suppose that $\vv{\Gamma_1}$ and $\vv{\Gamma_2}$ are totally oriented Klein graphs.
Then:
   \[\sigma(\vv{\Gamma_1}) - \sigma(\vv{\Gamma_2}) = \sigma(\vv{\Gamma_1} \#_3 -\mir{\vv{\Gamma_2}})\]
    where $\vv{\Gamma_1} \#_3 -\mir{\vv{\Gamma_2}}$ is any vertex sum of $\vv{\Gamma_1}$ and $-\mir{\vv{\Gamma_2}}$ compatible with the total orientations. \qed
\end{cor}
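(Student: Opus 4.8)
The plan is to derive this directly from Lemma~\ref{lem:signatureoperations}, exactly mirroring the classical fact that $\sigma(K_1) - \sigma(K_2) = \sigma(K_1 \# -\mir{K_2})$ for knots. First I would apply the additivity of the signature under vertex connected sum (the third displayed equality of Lemma~\ref{lem:signatureoperations}) to the totally oriented Klein graph $\vv{\Gamma_1} \#_3 -\mir{\vv{\Gamma_2}}$; this is legitimate precisely because, by hypothesis, the vertex sum is taken along vertices whose total orientations are compatible, which is the condition under which Lemma~\ref{lem:signatureoperations} applies. This yields $\sigma(\vv{\Gamma_1} \#_3 -\mir{\vv{\Gamma_2}}) = \sigma(\vv{\Gamma_1}) + \sigma(-\mir{\vv{\Gamma_2}})$.

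Next I would simplify $\sigma(-\mir{\vv{\Gamma_2}})$ using the first two displayed equalities of Lemma~\ref{lem:signatureoperations}: invariance of $\sigma$ under total orientation reversal gives $\sigma(-\mir{\vv{\Gamma_2}}) = \sigma(\mir{\vv{\Gamma_2}})$, and the sign change of $\sigma$ under mirroring gives $\sigma(\mir{\vv{\Gamma_2}}) = -\sigma(\vv{\Gamma_2})$. Assembling these identities gives $\sigma(\vv{\Gamma_1} \#_3 -\mir{\vv{\Gamma_2}}) = \sigma(\vv{\Gamma_1}) - \sigma(\vv{\Gamma_2})$, which is the claimed equality.

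The only point requiring a moment's care is that the operations of reversal and mirroring commute, so that the object $-\mir{\vv{\Gamma_2}}$ is unambiguous, and that passing from $\vv{\Gamma_2}$ to $-\mir{\vv{\Gamma_2}}$ does not interfere with which vertex-sum identifications are compatible with the total orientations; both facts are immediate from Definition~\ref{def:total orientation definition graph} and the description of connected sums in Section~\ref{subsec:klein}. I therefore do not expect any genuine obstacle here: the corollary is a formal bookkeeping consequence of Lemma~\ref{lem:signatureoperations}, which is why it is stated with \qed.
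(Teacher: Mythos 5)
Your proposal is correct and is exactly the argument the paper intends: the corollary is stated immediately after Lemma~\ref{lem:signatureoperations} with the phrase ``This immediately gives,'' and the computation $\sigma(\vv{\Gamma_1} \#_3 -\mir{\vv{\Gamma_2}}) = \sigma(\vv{\Gamma_1}) + \sigma(-\mir{\vv{\Gamma_2}}) = \sigma(\vv{\Gamma_1}) - \sigma(\vv{\Gamma_2})$ via additivity, reversal-invariance, and negation under mirroring is precisely the intended (unwritten) proof.
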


\begin{cor}
\label{two ended cobordism}
Suppose that $\vv{\Gamma_1}$ and $\vv{\Gamma_2}$ are totally oriented Klein graphs, each with at least one vertex. Let $\mu_i = \mu(\vv{\Gamma_i})$ and $V_i = |V(\vv{\Gamma_i})|$.  Then:
\[
 |\sigma(\vv{\Gamma_1}) -\sigma(\vv{\Gamma_2})| 
 \leq  3 -   (V_1 + V_2 -2)   
 +2|sv(\vv{\Gamma_1}) -\sv(\vv{{\Gamma_2}})| 
 - 4 \chi^{orb}_4(\vv{\Gamma_1} \#_3 -\mir{\vv{\Gamma_2}}) 
-2(\mu_1+\mu_2-6) 
 +\beta(\vv{\Gamma_1})+\beta( \vv{\Gamma_2}) 
\]
\noindent where $\Gamma_1 {\#_3}-\mir{\Gamma_2}$ is any vertex sum of $\vv{\Gamma_1}$ and $-\mir{\vv{\Gamma_2}}$ compatible with the total orientations.
\end{cor}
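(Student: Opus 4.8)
The plan is to derive Corollary~\ref{two ended cobordism} by applying Theorem~\ref{thm:GammaSig} not to $\vv{\Gamma_1}$ or $\vv{\Gamma_2}$ individually, but to their vertex connected sum $\vv{\Gamma} := \vv{\Gamma_1} \#_3 -\mir{\vv{\Gamma_2}}$, and then rewriting every invariant of $\vv{\Gamma}$ that appears on the right-hand side in terms of the corresponding invariants of $\vv{\Gamma_1}$ and $\vv{\Gamma_2}$ using the additivity lemmas already established. Concretely, Theorem~\ref{thm:GammaSig} gives
\[
|\sigma(\vv{\Gamma})| \leq 3 - |V(\Gamma)| + 2|\sv(\vv{\Gamma})| - 4\chi^{orb}_4(\vv{\Gamma}) - 2(\mu(\Gamma)-3) + \beta(\vv{\Gamma}),
\]
so the entire task is bookkeeping: evaluate each of $\sigma$, $|V|$, $\sv$, $\mu$, and $\beta$ on $\vv{\Gamma}$.

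First I would handle the left-hand side. By Corollary~\ref{cor:signaturecobordisms}, $\sigma(\vv{\Gamma}) = \sigma(\vv{\Gamma_1}) - \sigma(\vv{\Gamma_2})$, so $|\sigma(\vv{\Gamma})| = |\sigma(\vv{\Gamma_1}) - \sigma(\vv{\Gamma_2})|$, which is exactly the left side of the claimed inequality. Next, the vertex sum removes one vertex from each summand and glues, so $|V(\Gamma)| = V_1 + V_2 - 2$; this is implicit in the setup of $\#_3$ (and matches the Euler characteristic count $\chi(\Gamma) = -|V(\Gamma)|/2$). For the signed seam vertex count, Lemma~\ref{lem:sv-additive} gives additivity under mirroring and reversal, $\sv(-\mir{\vv{\Gamma_2}}) = -\sv(\vv{\Gamma_2})$, together with $\sv(\vv{\Gamma_1} \#_3 -\mir{\vv{\Gamma_2}}) = \sv(\vv{\Gamma_1}) - \sv(\vv{\Gamma_2})$, so $2|\sv(\vv{\Gamma})| = 2|\sv(\vv{\Gamma_1}) - \sv(\vv{\Gamma_2})|$. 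For the nullity and component count, Lemma~\ref{lem:invts_of_sums} (combined with Lemma~\ref{lem:invts_of_rev_mir}, since mirroring and reversal fix both $\beta$ and $\mu$) gives $\beta(\vv{\Gamma}) = \beta(\vv{\Gamma_1}) + \beta(\vv{\Gamma_2})$ and $\mu(\Gamma) = \mu_1 + \mu_2 - 3$, whence $-2(\mu(\Gamma) - 3) = -2(\mu_1 + \mu_2 - 6)$.

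Substituting all of these into the displayed consequence of Theorem~\ref{thm:GammaSig} yields precisely the stated inequality, with the $-4\chi^{orb}_4(\vv{\Gamma_1} \#_3 -\mir{\vv{\Gamma_2}})$ term carried over verbatim. I would note that the hypothesis that each $\vv{\Gamma_i}$ has at least one vertex is what guarantees the vertex sum $\#_3$ is defined; and compatibility of the total orientations (which requires a pair of oppositely oriented vertices across the summands, equivalently — after the mirror-reversal — a pair of same-type vertices $v_i$ of $\vv{\Gamma_i}$) is exactly what makes the total orientation descend to $\vv{\Gamma}$, as recorded in the connected-sum discussion in Section~\ref{sec:littleorientations} and invoked in the statements of Lemmas~\ref{lem:signatureoperations} and \ref{lem:sv-additive}.

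There is essentially no analytic obstacle here: the corollary is a formal consequence of Theorem~\ref{thm:GammaSig} and the additivity properties of the five invariants under $\#_3$, mirroring, and reversal, all of which are already in hand. The only point requiring a little care is ensuring the invariants additivity statements are applied to the correctly oriented summand $-\mir{\vv{\Gamma_2}}$ rather than $\vv{\Gamma_2}$ — i.e. tracking the sign change in $\sigma$ and $\sv$ under the combined mirror-reversal while noting $\beta$ and $\mu$ are insensitive to it — so that the final expression has the $-\sigma(\vv{\Gamma_2})$ and $-\sv(\vv{\Gamma_2})$ appearing inside absolute values as written. I expect that bookkeeping, not any conceptual difficulty, to be where a reader should slow down.
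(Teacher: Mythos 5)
Your proposal is correct and matches the paper's own argument, which likewise derives the corollary by applying Theorem~\ref{thm:GammaSig} to the vertex sum $\vv{\Gamma_1} \#_3 -\mir{\vv{\Gamma_2}}$ and then invoking Corollary~\ref{cor:signaturecobordisms} together with Lemmas~\ref{lem:invts_of_rev_mir}, \ref{lem:invts_of_sums}, and \ref{lem:sv-additive} for the bookkeeping. Your write-up in fact spells out the substitutions that the paper leaves implicit.
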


\begin{proof}
This follows from Theorem~\ref{thm:GammaSig} and Corollary~\ref{cor:signaturecobordisms} along with Lemmas~\ref{lem:invts_of_rev_mir}, \ref{lem:invts_of_sums}, and \ref{lem:sv-additive}.  
\end{proof}

\begin{lemma}\label{lem:slicecharcobordisms}
Suppose that $\vv{\Gamma}_1$ and $\vv{\Gamma}_2$ are totally oriented Klein graphs, each with at least one vertex, that cobound a totally orientable seamed foamy cobordism. Then, 
    \[
    \chi^{orb}_4(\vv{\Gamma}_1, \vv{\Gamma}_2;s) \leq \chi^{orb}_4(\vv{\Gamma}_1 \#_3 -\mir{\vv{\Gamma}_2})-\frac54
    \]
    where $\vv{\Gamma}_1 {\#_3}-\mir{\vv{\Gamma}_2}$ is  any vertex sum of $\vv{\Gamma_1}$ and $-\mir{\vv{\Gamma_2}}$ compatible with the total orientations.
\end{lemma}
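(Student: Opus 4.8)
The plan is to realize $\chi^{orb}_4(\vv{\Gamma}_1,\vv{\Gamma}_2;s)$ by an optimal totally orientable seamed foamy cobordism, cut out a regular neighborhood of one of its spanning seam edges to obtain a slice foam for $\vv{\Gamma}_1 \#_3 -\mir{\vv{\Gamma}_2}$, and verify by an Euler-characteristic bookkeeping that $\chi^{orb}$ increases by exactly $\tfrac54$. To begin, I would check that the maximum defining $\chi^{orb}_4(\vv{\Gamma}_1,\vv{\Gamma}_2;s)$ is attained: by hypothesis the relevant class of cobordisms is nonempty, Lemma~\ref{lem:existenceofTO+noBCS2} lets us also assume no bicolored spheres (its torus connect-sums leave the seam set, hence the ``seamed'' property, untouched), Lemma~\ref{lem:noHamSeamCobord-simpler} bounds $\chi^{orb}$ above on this class, and $\chi^{orb}$ is $\tfrac14\Z$-valued. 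Fix such an optimal cobordism $G$, so that $\chi^{orb}(G)=\chi^{orb}_4(\vv{\Gamma}_1,\vv{\Gamma}_2;s)$, $G$ has no seam vertices, and $G$ has no bicolored spheres.

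Let $v_0\in V(\Gamma_1)$ and $v_1\in V(\Gamma_2)$ be the vertices along which the compatible vertex sum in the statement is formed; compatibility with the total orientations is exactly the condition that $v_0$ and $v_1$ have the same orientation type, so Lemma~\ref{lem:seambetweenvtxes} applies, and since $V\geq 1$ I would use it to arrange that $G$ has a seam (arc) component $e$ with endpoints precisely $v_0$ and $v_1$. Then $G':=G\cut N(e)$, for $N(e)$ a regular neighborhood of $e$, is a totally orientable slice foam in $B^4$ with $\bdry G'=\vv{\Gamma}_1 \#_3 -\mir{\vv{\Gamma}_2}$ (the vertex sum taken along $v_0,v_1$), exactly as in the proof of Lemma~\ref{lem:seamedfoamycobord_implies_slicefoam}; note $G'$ still has no seam vertices.

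For the count, write $G=G'\cup(G\cap\overline{N(e)})$. The piece $G\cap\overline{N(e)}$ is a ``book'' $\YGraph\times I$ with $\chi=1$, while $G'\cap(G\cap\overline{N(e)})$ is the union of the three arcs in which $G$ meets the frontier of $N(e)$, with $\chi=3$, so inclusion--exclusion gives $\chi(G')=\chi(G)+2$. Since $G$ has no seam vertices, $e$ is an entire arc component of the $1$-manifold $s(G)$, disjoint from the rest, so $s(G')=s(G)\setminus e$ and $\chi(s(G'))=\chi(s(G))-1$. Hence
\[
\chi^{orb}(G')=\tfrac12\chi(G')-\tfrac14\chi(s(G'))=\bigl(\tfrac12\chi(G)-\tfrac14\chi(s(G))\bigr)+1+\tfrac14=\chi^{orb}(G)+\tfrac54.
\]
Moreover $G'$ has no bicolored spheres: each $G'_{ij}$ is $G_{ij}$ cut along the arc $e$, which is properly embedded in $G_{ij}$ with endpoints on $\bdry G_{ij}$ (at $v_0$ and $v_1$, where $(\Gamma_i)_{ij}$ is regular), and cutting a surface along such an arc leaves every component with nonempty boundary, hence creates no closed component. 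Finally, since the totally oriented slice foam $G'$ for $\vv{\Gamma}_1 \#_3 -\mir{\vv{\Gamma}_2}$ has no seam vertices, Lemma~\ref{lem:sv-combinatorics} forces $|\sv|(\vv{\Gamma}_1 \#_3 -\mir{\vv{\Gamma}_2})=0$, so $G'$ is admissible in the maximum of Definition~\ref{defn:chi-orb}, and
\[
\chi^{orb}_4(\vv{\Gamma}_1 \#_3 -\mir{\vv{\Gamma}_2})\;\geq\;\chi^{orb}(G')\;=\;\chi^{orb}_4(\vv{\Gamma}_1,\vv{\Gamma}_2;s)+\tfrac54,
\]
which rearranges to the claim.

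I expect the delicate point to be the relocation in the second paragraph: one needs the optimal cobordism to be realizable with a seam edge at the prescribed compatible pair $(v_0,v_1)$ \emph{without lowering} $\chi^{orb}$ — equivalently, that $\chi^{orb}_4(\vv{\Gamma}_1 \#_3 -\mir{\vv{\Gamma}_2})$ does not depend on which compatible vertex sum is used. This comes down to a $\chi^{orb}$-neutral version of the seam-transposition cobordism of Lemma~\ref{lem:seamtransposition} (a careful accounting of its open/close and $\theta$-slab replacements, or a more economical substitute). By contrast, the attainment of the maximum, the Euler-characteristic bookkeeping, and the absence of bicolored spheres and seam vertices in $G'$ are all routine.
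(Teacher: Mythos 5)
Your argument is the paper's: realize $\chi^{orb}_4(\vv{\Gamma}_1,\vv{\Gamma}_2;s)$ by an optimal seamed foamy cobordism without bicolored spheres, use Lemmas \ref{lem:seambetweenvtxes} and \ref{lem:seamedfoamycobord_implies_slicefoam} to excise a neighborhood of a seam edge joining the prescribed vertex pair, and verify that $\chi^{orb}$ rises by exactly $\tfrac54$ --- your bookkeeping matches the paper's count $\chi^{orb}(F)=\chi^{orb}(F')+\tfrac14-\tfrac32$, and your extra checks (attainment of the maximum, no bicolored spheres or seam vertices in the cut-open foam) are correct details the paper leaves implicit. The ``delicate point'' you flag --- that relocating the seam edge to an arbitrary compatible pair $(v_0,v_1)$ via the seam-transposition cobordism must not lower $\chi^{orb}$ --- is a genuine issue, but the paper does not address it either: it simply invokes Lemma \ref{lem:seamedfoamycobord_implies_slicefoam} for ``any pair of vertices \dots with the same orientation'' and performs the same Euler-characteristic count as if the relocation were free.
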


\begin{proof}
    
    Let $\YGraph$  be the graph in $B^3$ that is the cone on $3$ points in $S^2$.    
    Let $F$ be a seamed foamy cobordism from $\vv{\Gamma}_1$ to $\vv{\Gamma}_2$ in $S^3 \times [0,1]$ realizing $\chi^{orb}_4(\vv{\Gamma}_1, \vv{\Gamma}_2;s)$ and having no bicolored spheres. 

    By Lemma \ref{lem:seamedfoamycobord_implies_slicefoam}, from any pair of vertices $v_1\in \vv{\Gamma}_1$ and $v_2\in\vv{\Gamma}_2$ with the same orientation, we may construct a slice foam for $\vv{\Gamma}_1\#_3-\mir\vv{\Gamma}_2$ by summing along a seam connecting $v_1$ and $v_2$.
    
    Using Definition~\ref{defn:chi-orb}, we may count 
    \[\chi^{orb}(F) 
    = \chi^{orb}(F') + \frac14 -\frac32 = \chi^{orb}(F') -\frac54.\]
    Since $F'$ may not realize $\chi^{orb}_4(\vv{\Gamma}_1 \#_3 -\mir\vv{\Gamma}_2)$, we then have the stated inequality.  
\end{proof}

\begin{remark}
    A priori, vertex sums along different pairs of vertices may produce different foamy cobordisms. 
    The slice orbifold Euler characteristic that we calculate thus depends on the choice of vertices, but for any choice the slice orbifold characteristic is bounded below by the signature difference.
\end{remark}

\subsection{Building foams from crossing changes}
\label{sec:cd}

In this section, we build foams with a view exploring the inequality \eqref{ineq:c} in Theorem \ref{thm:main}.  The two basic elementary cobordisms we employ are shown in Figure~\ref{fig:elementarycobordism}(Top Left) and (Top Center): the elementary saddle cobordism and the elementary (un)zip cobordism.

\begin{figure}
    \centering
    \includegraphics[width=.8\textwidth]{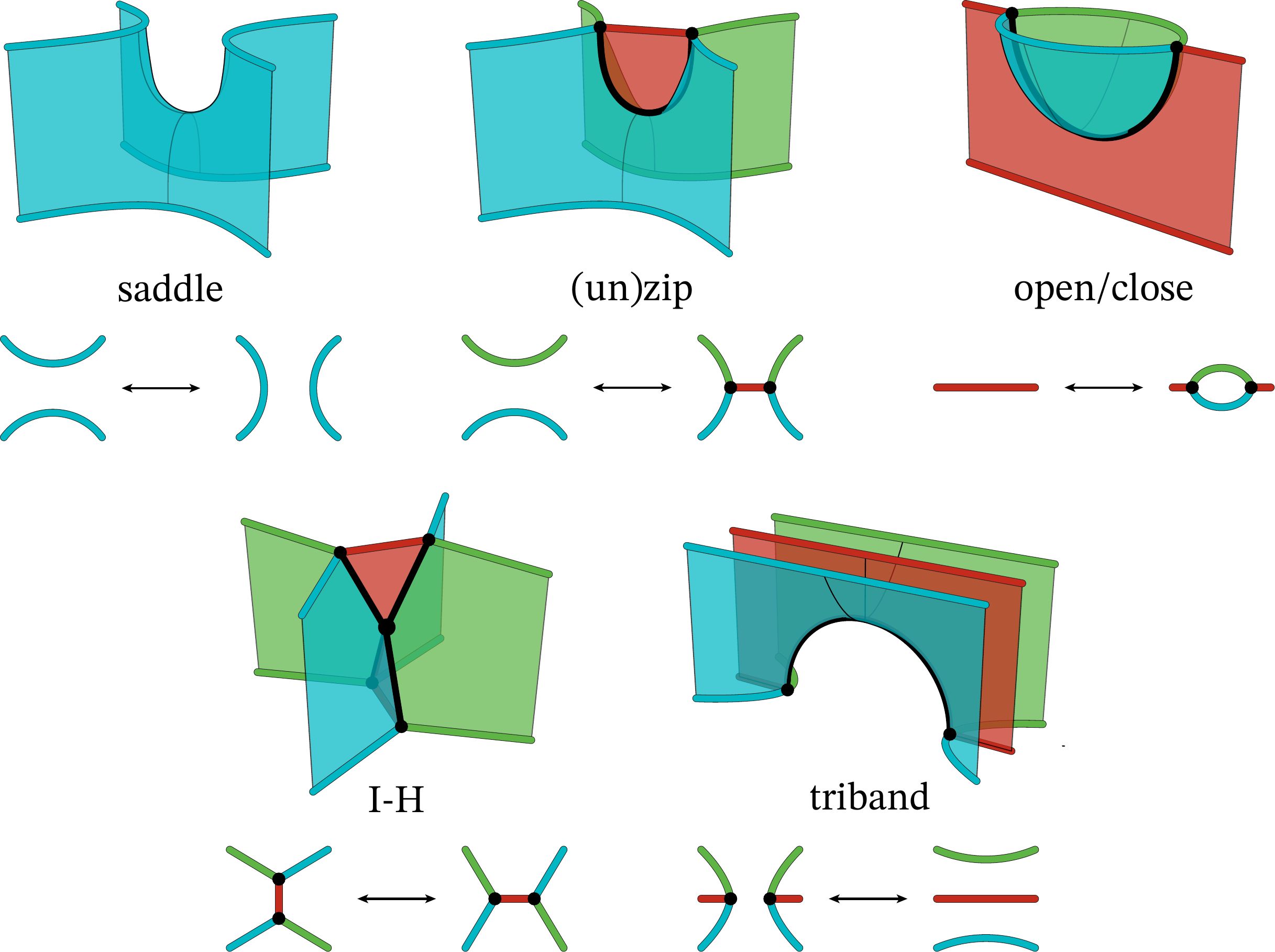}
    \caption{(Top Left) The elementary saddle cobordism.  (Top Center) The elementary (un)zip cobordism. (Top Right) The elementary open/close cobordism. (Bottom Left) The elementary I-H cobordism.  (Bottom Right) The triband cobordism.}
    \label{fig:elementarycobordism}
\end{figure}

\begin{theorem}
\label{chi-orb s-m}    
 Suppose there is a sequence of $s$ same-colored crossing changes and $m$ mixed-colored crossing changes between totally oriented Klein graphs $\vv{\Gamma_1}$ and $\vv{\Gamma_2}$.   Then there is a totally orientable seamed foamy cobordism $\vv{F}$ between $\vv{\Gamma_1}$ and $\vv{\Gamma_2}$ without bicolored spheres with 
    \[ \chi^{orb}(\vv{F}) = -|V(\vv{\Gamma_1})|/2 - (s+m/2).\]
\end{theorem}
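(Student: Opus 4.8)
The plan is to realize the hypothesized crossing-change sequence as a product (stacking) of elementary foamy cobordisms in $S^3 \times I$ and then compute $\chi^{orb}$ additively. First I would reduce each crossing change to a standard local model. A crossing change between two strands can be accomplished by a well-known pair of saddle moves: replace the crossing by an oppositely-clasped pair of bands, i.e., perform two elementary saddle cobordisms (Figure~\ref{fig:elementarycobordism}(Top Left)) in succession, so that the net effect on the Klein graph is exactly the crossing change. The intermediate graph between the two saddles is the original graph with a band surgered in; the key point for the final count is that each such saddle contributes $-\tfrac12$ to $\chi$ of the underlying $2$-complex (a saddle has $\chi = -1$ on the ``pair of pants'' relative to a product, so two saddles per crossing change give $\chi$-contribution $-2$ to $F$, hence $-1$ to $\chi^{orb}$ via the $\tfrac12$ factor after checking the seam set is unaffected). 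For a same-color crossing change, the two strands lie on a single bicolored surface of one color, the saddles are monochromatic, they do not meet $s(F)$, and $s(F)$ is just $V(\Gamma_1)\times I$ throughout; this piece of the cobordism contributes $-s$ to $\chi^{orb}$. For a mixed-color crossing change between an $i$-strand and a $j$-strand, the local picture still uses two saddles but now one must arrange the bands so that the total orientation (the $ij$-orientations in particular, as flagged in the proof of Theorem~\ref{thm:doKleinboundstofoamwithseamvertices}) extends; after a possible reversal the two saddles again avoid $s(F)$, and I expect each mixed crossing change to contribute only $-\tfrac12$ to $\chi^{orb}$ rather than $-1$ — this asymmetry is precisely what motivates the definition of $d_{\Klein}$ with weights $s + m/2$, and it comes from the mixed crossing change being realizable by a single saddle-type move on the relevant bicolored surface rather than two, or from a factor-of-two bookkeeping in how the $2$-complex's Euler characteristic interacts with the coloring.

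Next I would assemble the stack. Concatenating the local cobordisms for the $s+m$ crossing changes gives a foamy cobordism $\vv{F}_0$ from $\vv{\Gamma_1}$ to $\vv{\Gamma_2}$ whose seam set $s(\vv{F}_0) = V(\Gamma_1) \times I$ consists of $|V(\Gamma_1)|$ properly embedded arcs — no seam vertices and no closed seam components — so $\vv{F}_0$ is already a seamed foamy cobordism in the sense of Definition~\ref{defn:foamycobordism}. Then $\chi(s(\vv{F}_0)) = 0$, and $\chi(\vv{F}_0) = \chi(\Gamma_1 \times I) + (\text{sum of saddle contributions}) = -|V(\Gamma_1)|/2 - 2s - m$ (using $\chi(\Gamma_1 \times I) = \chi(\Gamma_1) = -|V(\Gamma_1)|/2$ and each same change costing $2$, each mixed change costing... — here the precise per-move cost of the mixed changes is what the calculation must pin down so that the final $\chi^{orb}$ lands on $-|V|/2 - (s+m/2)$). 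Applying $\chi^{orb}(\vv{F}_0) = \tfrac12\chi(\vv{F}_0) - \tfrac14\chi(s(\vv{F}_0)) = \tfrac12\chi(\vv{F}_0)$ from Definition~\ref{defn:chi-orb} then yields the claimed value, provided the mixed contribution to $\chi(\vv{F}_0)$ per move is exactly $-1$ (giving $\tfrac12(-|V|/2 - 2s - m) = -|V|/4 - s - m/2$ — note this forces a reconciliation with the stated $-|V|/2$, so the saddle accounting and the coefficient on $\chi(s(F))$ must be tracked carefully; I would double-check using Lemma~\ref{lem:chiorbcounts}, computing $4\chi^{orb}$ directly from the three bicolored surfaces $\chi(F_{rb})+\chi(F_{bg})+\chi(F_{rg}) - |V|$, which is the cleanest route).

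Finally I would handle two cleanup issues. First, bicolored spheres: if the stacking process ever creates a bicolored sphere component (which can happen when a saddle splits off a small sphere), Lemma~\ref{lem:existenceofTO+noBCS2} lets me tube it away by connect-summing a torus onto a facet in that component; this preserves the boundary and the total orientability, and — crucially — changes $\chi$ of that bicolored surface by $-2$ while changing $\chi$ of the underlying $2$-complex by $-2$ and leaving $s(F)$ alone, so I would instead argue that one can choose the saddle moves to avoid creating bicolored spheres in the first place, keeping $\chi^{orb}$ on the nose. Second, total orientability: I must verify that the double orientations on $\vv{\Gamma_1}$ extend across each elementary cobordism. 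For the monochromatic saddles of a same change this is automatic since one just extends the orientation of the single bicolored surface over a product-with-a-band; for the mixed changes I appeal to the orientation-tracking already carried out in the proof of Theorem~\ref{thm:doKleinboundstofoamwithseamvertices}, where mixed crossing changes between an $i$-edge and a $j$-edge were realized ``mindful of the $ij$-orientations.'' The main obstacle I anticipate is exactly this mixed-color bookkeeping: getting the band framings and the $ij$-/$ik$-/$jk$-orientations simultaneously consistent so that the resulting foam is genuinely totally orientable while the Euler-characteristic cost is the stated $m/2$ per mixed change rather than $m$ — the entire weighting $s + m/2$ in $d_{\Klein}$ hinges on this being done correctly, and it is where I would spend the bulk of the careful argument, likely with an explicit local picture for the mixed crossing change analogous to Figure~\ref{fig:elementarycobordism}.
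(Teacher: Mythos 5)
There are two genuine gaps, both centered on the mixed-color crossing change. First, you never actually construct the elementary cobordism realizing a mixed-color crossing change; you posit ``two saddles'' and then offer two competing guesses for why the cost should be $-\tfrac12$ rather than $-1$. Neither guess is the right mechanism, and the two-saddle model cannot even be a Klein foam: a saddle band is a single facet and hence monochromatic, so it cannot join an $i$-facet to a $j$-facet. The paper's $F_m$ is instead a zip/unzip cobordism (Figure~\ref{fig:crossingchangecobordism}, bottom): a $1$-handle joining the $i$- and $j$-facets of $\vv{\Gamma}\times I$ together with a $k$-colored disk glued along the belt circle, which becomes a new \emph{closed} seam component. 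The arithmetic $\chi^{orb}(F_m)=\chi^{orb}(\Gamma\times I)-\tfrac12$ comes from the handle contributing $-2$ to $\chi(F)$ while the membrane disk contributes $+1$ and the new seam circle contributes $0$ to $\chi(s(F))$ --- not from any saddle count. Relatedly, your assertion that $\chi(s(F_0))=0$ is wrong (each of the $|V|$ seam arcs has $\chi=1$, so $\chi(s(F_0))=|V|$); you flag that your totals do not reconcile, but as written the computation does not close.

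Second, your claim that total orientability is ``automatic'' for the monochromatic saddles of a same-color crossing change is false, and this is the central subtlety of the theorem. A same-color change on $i$-colored edges lives on \emph{both} $F_{ij}$ and $F_{ik}$; when the change involves two distinct $i$-edges whose double orientations have opposite sign, the band respects the orientation of one bicolored surface but forces the other to be non-orientable. The paper's fix is to replace each such problematic same-color change by \emph{two} mixed-color changes (Figure~\ref{fig:sametomixed}), which is $\chi^{orb}$-neutral precisely because $2\cdot(-\tfrac12)=-1$. Without this step the foam you stack need not be totally orientable, so the theorem's conclusion fails for your construction. (Your treatment of bicolored spheres is also weaker than needed --- tubing via Lemma~\ref{lem:existenceofTO+noBCS2} would change $\chi^{orb}$ --- but the paper's observation that every component of every bicolored surface of each elementary cobordism meets both ends disposes of this, and you correctly sense that a direct argument is required.)
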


\begin{remark}\label{rem:extending_TOs_to_foams}
    Crossing changes do not alter total orientations on the edges involved. 
    Indeed, any choice of total orientation on $\Gamma_1$ confers a total orientation on $\Gamma_2$ and extends to a total orientation on the seamed foamy cobordism $F$ constructed in the proof of Theorem~\ref{chi-orb s-m}.
\end{remark}

\begin{figure}[!htbp]
    \centering
    \includegraphics[width=.9\textwidth]{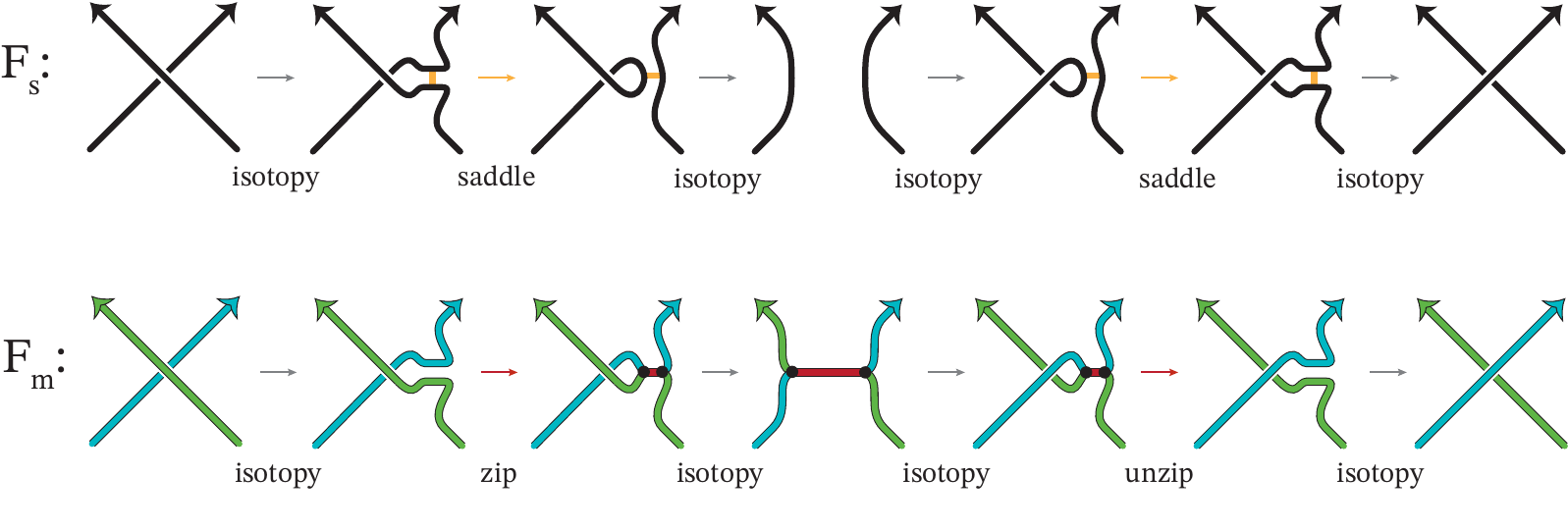}
    \caption{Top: Movie of an oriented same crossing change cobordism $F_s$ realized by two elementary saddle cobordisms.  Bottom: Movie of an oriented mixed crossing change  cobordism $F_m$  realized by a pair of elementary zip/unzip cobordisms.}
    \label{fig:crossingchangecobordism}
\end{figure}

\begin{figure}
    \centering
    \includegraphics[width=.8\textwidth]{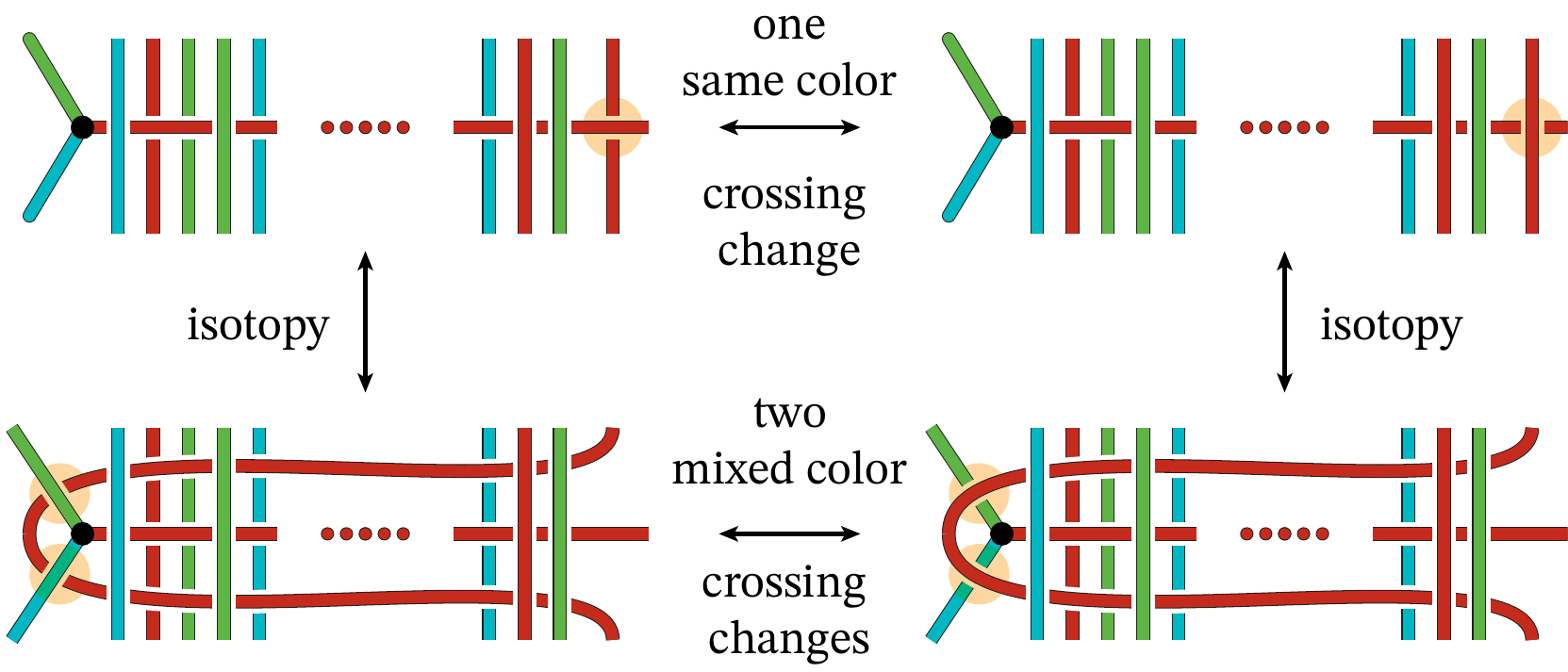}
    \caption{A same crossing change may be realized as two mixed crossing changes.}
    \label{fig:sametomixed}
\end{figure}

\begin{proof}[Proof of Theorem \ref{chi-orb s-m}]
In the appendix of \cite{GR}, a foam is constructed from a sequence of crossing changes on a Klein graph.  However, the constructed foam is not necessarily totally orientable.   Here we review such a construction and how to alter it in order to produce a totally orientable foamy cobordism. The key step is to replace any problematic same-colored crossing change by a pair of mixed-colored crossing changes. 
Thereafter, we determine the Euler characteristic of the constructed foam.

Mirroring as needed, Figure~\ref{fig:crossingchangecobordism} shows, via a movie, how single same-colored or mixed-colored crossing changes of oriented arcs of a Klein graph can be realized as an ``elementary'' foamy cobordism of type $F_s$ or $F_m$ from $\Gamma$ to $\Gamma_s$ or $\Gamma_m$, respectively. Observe that $F_s$ is assembled using two elementary saddle cobordisms while $F_m$ is assembled using a pair of elementary zip/unzip cobordisms. See Figure~\ref{fig:elementarycobordism}.  Further note that both $F_s$ and $F_m$ are seamed foamy cobordisms.

Stacking these cobordisms together according to a crossing change sequence from a totally oriented Klein graph $\vv{\Gamma_1}$  to a totally oriented Klein graph $\vv{\Gamma_2}$ then assembles a seamed foamy cobordism $F$ between the two graphs.  However, one must be more careful to ensure the foamy cobordism $F$ admits a total orientation that respects the total orientations of the two graphs.

Observe that crossing changes do not alter the double orientations on edges of a totally oriented Klein graph.  
Hence, through the sequence of crossing changes, the total orientation on $\Gamma_1$ induces a total orientation on each of the intermediate graphs and is carried to the total orientation on $\Gamma_2$.
To make the elementary crossing change foams piece together into a totally oriented foam, the elementary crossing change cobordisms need to respect these total orientations.

\medskip
Figure~\ref{fig:crossingchangecobordism} (Top) shows how a saddle-saddle sequence produces a foamy cobordism associated to a same-color crossing change of oriented arcs. This same-color crossing change cobordism $F_s$ between Klein graphs $\Gamma$ and $\Gamma_s$ is built from two saddle moves that respect a choice of orientations on the arcs involved. Together these saddles form a 2-dimensional 1-handle that is attached in a manner that respects the orientation chosen on these arcs of $\Gamma$.  (The slice genus bound on unknotting number for knots follows from stacking such cobordisms.) 

Note that a same-colored crossing change involves either one or two edges.
When the double orientations of the edges involved in a same-colored crossing change have the same sign (are all parallel or all antiparallel), one may observe that $F_s$ is totally orientable in a manner that respects the total orientations of $\Gamma$ and the resulting $\Gamma_s$.
However, suppose a same-colored crossing change involved two distinct $i$-colored edges where the edges have opposite sign.  Then one of the surfaces $(F_s)_{ij}$ and $(F_s)_{ik}$ is orientable while the other is non-orientable.  Hence, the cobordism $F_s$ associated to a same-color crossing change is totally orientable (with a total orientation respecting the total orientations of $\Gamma$ and $\Gamma_s$) exactly when the edges involved have the same sign of double orientation.  Fortunately, as shown in Figure~\ref{fig:sametomixed},  a problematic same-color crossing change (where the edges have different signs) can be realized as a pair of mixed-color crossing changes.

\medskip

Figure~\ref{fig:crossingchangecobordism} (Bottom)  shows how a zip-unzip sequence produces a seamed foamy cobordism associated to a mixed-color crossing change of oriented arcs.   This mixed-color crossing change cobordism $\vv{F_m}$ between totally oriented Klein graphs $\vv{\Gamma}$ and $\vv{\Gamma_m}$ is homeomorphic to the totally oriented trivial cobordism $\vv{\Gamma} \times I$ with a $1$-handle joining the facet surfaces of the two colors of the crossing change along with a disk of the third color that is the co-core of the handle. 
(This move adds a closed component to the seams.) 
Suppose the mixed-color crossing changes involves an $i$-colored and a $j$-colored edge.  When forming $\vv{F_m}$, the  $1$-handle  is attached to $\vv{\Gamma} \times I$ so that $(\vv{F_m})_{ij}$ is oriented, extending the orientation of $(\vv{\Gamma} \times I)_{ij}$.   Then, since $(\vv{F_m})_{ik}$ and  $(\vv{F_m})_{jk}$ are identified with $(\vv{\Gamma \times I})_{ik}$ and $(\vv{\Gamma \times I})_{jk}$ by having an interior disk of $(\vv{\Gamma} \times I)_{i}$ and $(\vv{\Gamma} \times I)_{j}$ changed to a $k$-colored disk, the surfaces are oriented.  Indeed, this shows that the bicolored surfaces of $\vv{F_m}$ are oriented in a manner that respect the total orientations of $\Gamma$ and $\Gamma_m$. Hence $\vv{F_m}$ is a totally oriented cobordism between  $\vv{\Gamma}$ and $\vv{\Gamma_m}$.

Now, given a sequence of crossing changes from $\vv{\Gamma_1}$ to $\vv{\Gamma_2}$, exchange any same-colored crossing change between a parallel edge and an antiparallel edge for a pair of mixed-colored crossings changes so that the crossing changes are realized by totally oriented elementary foamy cobordisms.
Then stack all these totally oriented elementary foamy cobordisms together to assemble a totally oriented foamy cobordism $\vv{F}$ from $\vv{\Gamma_1}$ to $\vv{\Gamma_2}$.   

Furthermore, one readily observes that each component of each bicolored surface of a mixed-color crossing change cobordism $\vv{F_m}$ between $\vv{\Gamma}$ and $\vv{\Gamma_m}$ has boundary meeting each $\vv{\Gamma}$ and $\vv{\Gamma_m}$.  Indeed, the only component of $(\Gamma_m)_{ij}$ which is not identified with a component of $(\vv{\Gamma} \times I)_{ij}$ is the one obtained by a 1-handle attachment.  Similarly, each component of each bicolored surface of a totally oriented same-colored crossing change cobordism $\vv{F_s}$ meets both $\vv{\Gamma}$ and $\vv{\Gamma_s}$.
Consequently, a totally oriented foamy cobordism  from $\vv{\Gamma_1}$ to $\vv{\Gamma_2}$ obtained by stacking totally oriented elementary crossing change cobordisms has no bicolored spheres. 

\medskip

Now we calculate the Euler characteristics of these elementary cobordisms.   (Total orientations play no role here.)
According to the above descriptions, the orbifold Euler characteristics of these elementary crossing change cobordisms are calculated to be 
\[\chi^{orb}(F_s) = \chi^{orb}(\Gamma \times I) - \frac12 \cdot 2 = \chi^{orb}(\Gamma \times I) - 1\] 
and 
\[\chi^{orb}(F_m) = \chi^{orb}(\Gamma \times I) - \frac12 \cdot 1 = \chi^{orb}(\Gamma \times I) - \frac12.\] 
So now given a sequence of $s$ same-colored crossing changes and $m$ mixed-colored crossing changes between Klein graphs $\Gamma_1$ and $\Gamma_2$ where the edges of $\Gamma_1$ have been assigned some orientations, let $F$ be the foamy cobordism from $\Gamma_1$ to $\Gamma_2$ obtained by stacking the associated elementary crossing change cobordisms.  Hence
\[\chi^{orb}(F) = \chi^{orb}(\Gamma_1 \times I) - s - m/2.\] 
Finally, the seam edges join the vertices of the components of $\Gamma_1 \times \boundary I$, so $\chi^{orb}(\Gamma_1 \times I) = \chi(\Gamma_1)$. Hence, $\chi^{orb}(F) = \chi(\Gamma_1) - s - m/2 = -\frac12 |V(\Gamma_1)| - s -m/2$, as claimed.

Observe that the contribution to $\chi^{orb}(F)$ of an elementary same-color crossing change cobordism is twice that of a mixed-color crossing change cobordism.  Since each same-colored crossing change may be realized by two mixed-color crossing changes, we may instead build a totally oriented foamy cobordism $\vv{F}'$ from $\vv{\Gamma_1}$ to $\vv{\Gamma_2}$ by stacking $m+2s$ elementary mixed-color crossing change cobordisms so that $\chi^{orb}(\vv{F}') = \chi^{orb}(\vv{F})$.  Furthermore, as discussed above, such a totally oriented foamy cobordism $\vv{F}'$ will have no bicolored spheres.
\end{proof}

\begin{cor}
\label{chi-orb dist}
Suppose the Klein Gordian distance between two Klein graphs $\Gamma_1$ and $\Gamma_2$ is $d_{\Klein}(\Gamma_1, \Gamma_2)$. Then 
        \[ -\chi_4^{orb}(\Gamma_1, \Gamma_2;s) \leq   d_{\Klein}(\Gamma_1, \Gamma_2)+\frac12 |V(\Gamma_1)|. 
        \]
\end{cor}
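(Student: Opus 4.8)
The plan is to deduce this directly from Theorem~\ref{chi-orb s-m}. Recall that $d_{\Klein}(\Gamma_1,\Gamma_2)$ is the minimum of $s+m/2$ over all crossing-change sequences taking $\Gamma_1$ to $\Gamma_2$; since $s+m/2$ ranges over the discrete set $\tfrac12\Z_{\ge 0}$, this minimum is realized (when finite) by an actual sequence consisting of $s$ same-colored and $m$ mixed-colored crossing changes with $s+m/2 = d_{\Klein}(\Gamma_1,\Gamma_2)$. If $d_{\Klein}(\Gamma_1,\Gamma_2)=\infty$ the asserted inequality is vacuous, so I would assume it is finite and fix such an optimal sequence.

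Next I would reconcile the total orientations, which appear in Theorem~\ref{chi-orb s-m} but not in the statement here. Pick an arbitrary total orientation on $\Gamma_1$, producing $\vv{\Gamma_1}$; by Remark~\ref{rem:extending_TOs_to_foams} the fixed crossing-change sequence carries it to a total orientation $\vv{\Gamma_2}$ on $\Gamma_2$. Applying Theorem~\ref{chi-orb s-m} to $\vv{\Gamma_1}$, $\vv{\Gamma_2}$, and this sequence yields a totally orientable seamed foamy cobordism $\vv{F}$ between them with no bicolored spheres and
\[
\chi^{orb}(\vv{F}) \;=\; -\tfrac12|V(\vv{\Gamma_1})| - (s+m/2) \;=\; -\tfrac12|V(\Gamma_1)| - d_{\Klein}(\Gamma_1,\Gamma_2).
\]
Forgetting the total orientation, $\vv{F}$ is in particular a totally orientable seamed foamy cobordism from $\Gamma_1$ to $\Gamma_2$ with no bicolored spheres, hence an admissible competitor in the maximum defining $\chi_4^{orb}(\Gamma_1,\Gamma_2;s)$ in Definition~\ref{defn:chi-orb}; that this maximum is finite is Lemma~\ref{lem:noHamSeamCobord-simpler}.

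Finally, combining these via the definition of the seamed cobordism characteristic as a maximum gives
\[
\chi_4^{orb}(\Gamma_1,\Gamma_2;s) \;\ge\; \chi^{orb}(\vv{F}) \;=\; -\tfrac12|V(\Gamma_1)| - d_{\Klein}(\Gamma_1,\Gamma_2),
\]
and negating yields $-\chi_4^{orb}(\Gamma_1,\Gamma_2;s) \le d_{\Klein}(\Gamma_1,\Gamma_2) + \tfrac12|V(\Gamma_1)|$, as claimed. There is no substantive obstacle: all the work is already in Theorem~\ref{chi-orb s-m}. The only points that deserve a sentence of care are the two bookkeeping steps above — that passing through an arbitrary total orientation changes neither the underlying unoriented cobordism nor its orbifold Euler characteristic, and that the cobordism produced genuinely lies in the class over which $\chi_4^{orb}(\Gamma_1,\Gamma_2;s)$ is maximized, in particular that it has no bicolored spheres, which is part of the conclusion of Theorem~\ref{chi-orb s-m}.
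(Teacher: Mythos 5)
Your proposal is correct and follows essentially the same route as the paper: realize $d_{\Klein}(\Gamma_1,\Gamma_2)$ by an optimal crossing-change sequence, invoke Theorem~\ref{chi-orb s-m} to produce a seamed foamy cobordism with $\chi^{orb}(F) = -\tfrac12|V(\Gamma_1)| - d_{\Klein}(\Gamma_1,\Gamma_2)$, and compare with the maximum defining $\chi_4^{orb}(\Gamma_1,\Gamma_2;s)$. Your extra bookkeeping about choosing and then forgetting a total orientation, and about the minimum being attained, is a slightly more careful version of what the paper leaves implicit.
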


\begin{proof}
 Say $d_{\Klein}(\Gamma_1, \Gamma_2)$ is realized by a sequence of $s$ same-colored crossing changes and $m$ mixed-colored crossing changes.  Then  by Theorem~\ref{chi-orb s-m} there is a seamed foamy cobordism $F$ such that 
 $\chi^{orb}(F) = -\frac12 V(\Gamma_1) - d_{\Klein}(\Gamma_1, \Gamma_2)$.
 Since  $\chi^{orb}(F) \leq \chi_4^{orb}(\Gamma_1, \Gamma_2;s)$ by Definition \ref{defn:chi-orb}, 
 we have 
 $-\frac12 V(\Gamma_1) - d_{\Klein}(\Gamma_1, \Gamma_2) \leq \chi_4^{orb}(\Gamma_1, \Gamma_2;s)$. 
 The inequality follows.
\end{proof}

\section{Examples}
\label{sec:examples}

We present several examples of how our bounds shed light on the unknotting number of $\theta$-curves; our bounds improve previously known bounds. We conclude by presenting examples of $\theta$-curves showing that the stronger form of Gille-Robert's signature does not produce bounds on unknotting number.

\subsection{Improved bounds}
\label{subsec:improvebounds}
Let the standard unknotting number $u(\thetan)$ be the minimum  number of crossing changes needed to unknot $\thetan$.

This is known to be bounded below by the maximal constituent unknotting number of the graph, that is $mcu(g)=max\{u(s)|\text{ where } s \text{ is a constituent of } g\}$ \cite{BOD}.   (For any Klein coloring of a $\thetan$-curve, a constituent knot is a bicolored knot.) The following is a family of $\thetan$-curves where our new bound $\frac{1}{4}|\sigma(\thetan)|\leq u_{\Klein}(\thetan)$ significantly improves on the past result.

\begin{figure}[!htbp]
    \centering
    \includegraphics[width=.8\textwidth]{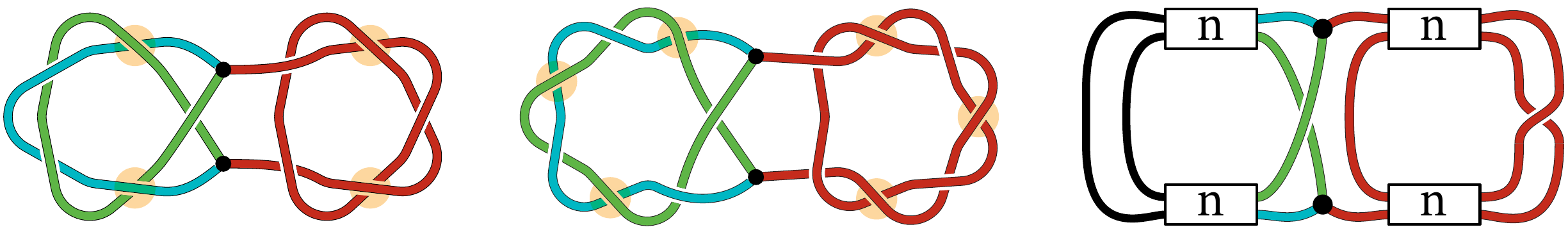} 
    \caption{(Left) A $\thetan$-curve $\thetan_2$ for which every constituent knot is the $(2,5)$-torus knot $5_1$. Highlighted are a set of $4$ crossing changes that unknot $\thetan_2$. (Center) A $\thetan$-curve $\thetan_3$ for which every constituent knot  is the $(2,7)$-torus knot $7_1$.  Highlighted are a set of $6$ crossing changes that unknot $\thetan_3$.  (Right) A generalization to a family of $\thetan$-curves  $\thetan_n$ for which every constituent knot is the $(2,2n+1)$-torus knot.}
    \label{fig:unknottingexamples}
\end{figure}

In the theta curve $\thetan_2$ of Figure \ref{fig:unknottingexamples}(Left), all of the constituent knots are $5_1$, so $mcu(\thetan_2)=u(5_1)=2$. (Note $|\sigma(5_1)|=4$.)  Thus the earlier bounds gives $2\leq u(\thetan_2)$.  Next $|\sigma(\thetan_2)| = 3|\sigma(5_1)|=3\cdot4=12.$  So our new bound gives $\frac{1}{4}|\sigma(\thetan_2)|=3 \leq u_{\Klein}(\thetan_2)$.  In the figure there is a set of 2 same crossings and 2 mixed crossing that will unknot $\thetan_2$, giving $s+\frac{m}{2}=3$.
Hence $u_{\Klein}(\thetan_2)=3$ and our lower bound on $u_{\Klein}$ is sharp in this case.  Furthermore, $u_{\Klein}(\thetan_2)=3$ implies that $u(\thetan_2) \geq 3$, improving the lower bound on unknotting number from $2$ to $3$.

Similarly, in the theta curve $\thetan_3$ of Figure \ref{fig:unknottingexamples}(Center), all of the constituent knots are $7_1$, so $mcu(\thetan_3)=u(7_1)=3$. (Note $|\sigma(7_1)|=6$.)  Thus the earlier bounds gives $3\leq u(\thetan_3)$.  Next $|\sigma(\thetan_3)| = 3|\sigma(7_1)|=3\cdot 6=18.$  So our new bound gives $\frac{1}{4}|\sigma(\thetan_3)|=4.5 \leq u_{\Klein}(\thetan_3)$.  In the figure there is a set of 3 same crossings and 3 mixed crossing that will unknot $\thetan_3$, giving $s+\frac{m}{2}=4.5$.
Hence $u_{\Klein}(\thetan_3)=4.5$ and our lower bound on $u_{\Klein}$ is sharp in this case.  Furthermore, $u_{\Klein}(\thetan_3)=4.5$ implies that $u(\thetan_3) \geq 5$, improving the lower bound on unknotting number from $3$ to $5$.  

Generalizing these examples further gives a family of $\thetan$-curves $\thetan_n$ for $n \in \N$ where all of the constituent knots are torus knots $T(2,2n+1)$, shown in Figure \ref{fig:unknottingexamples}(Right).  Here $mcu(\thetan_n)=u(T(2,2n+1))=n$.  Thus the earlier bound gives $n\leq u(\thetan_n)$ and our new bound gives $\frac{1}{4}|\sigma(\thetan_n)|=\frac32 n \leq u_{\Klein}(\thetan_n)$. Similarly these examples have a set of $n$ same crossings and $n$ mixed crossing that will unknot $\thetan_n$, giving $s+\frac{m}{2}=\frac32 n$.  Hence   $u_{\Klein}(\thetan_n)=\frac32 n$ while $u(\thetan_n) \leq 2n$.

\begin{question}\label{ques:unknottingexample}
While $u(\thetan_n) \leq 2n$,
our count $u_{\Klein}(\thetan_n)=\frac32 n$ implies that $u(\thetan_n) \geq \lceil \frac32 n \rceil$. 
Is $u(\thetan_n) < 2n$ for some $n \geq 2$?
\end{question}

\begin{remark}
    Observe that $u(\thetan_2) \in \{3,4\}$.  If $u(\thetan_2)=3$, then $\thetan_2$ would be unknotted by a set of $3$ same crossing changes.

    Similarly, $u(\thetan_3) \in \{5,6\}$.  If $u(\thetan_3)=5$, then $\thetan_3$ would be unknotted by a set of $4$ same crossing changes and $1$ mixed crossing change.
\end{remark}

\begin{remark}
    Each $\thetan_n$ is the edge connected sum of a theta curve $\thetan_n'$ and the knot $T(2,2n+1)$.  One observes that $mcu(\thetan_n')=u(T(2,2n+1))=n$ so both of these have unknotting number $n$.
    Hence if $u(\thetan_n) < 2n$ for some $n \geq 2$, answering Question~\ref{ques:unknottingexample} in the affirmative, then the unknotting number for theta curves would be not additive under edge connected sum. 
\end{remark}

\subsection{Strong signature}
\label{subsec:strong sig}

In their article, Gille-Robert define two signature invariants, $\sigma$ and $\tilde{\sigma}$ for 3-Hamiltonian Klein graphs; we term them the \emph{weak signature} and the \emph{strong signature}. The signature used through this article (which we extend to all Klein graphs) is the weak signature $\sigma(\Gamma)$, defined  as
\[\sigma(\Gamma) = \sigma(W_F) + \frac12e(F),\]
and is equivalent to the sum of the signatures of the constituent knots $\sigma(\Gamma)=\sigma(K_{rb})+\sigma(K_{bg})+\sigma(K_{rg})$ \cite[Proposition 3.14]{GR}. 
The term $\sigma(W_F)$ is the signature of the 4-manifold Klein cover of the 4-ball branched over a Klein foam $F$ for $\Gamma$, and $e(F)$ is the \emph{weak} Euler number of the foam $F$, given by the sum of relative normal Euler numbers of the facets, $e(F_{rb})+e(F_{bg})+e(F_{rg})$. The \emph{strong} signature is defined as
\[\tilde{\sigma}(\Gamma) = \sigma(W_F) + \frac12\tilde{e}(F),\]
where $\tilde{e}(F)$ is the \emph{strong} Euler number \cite[Definition 3.10]{GR}. The strong Euler number is given by the sum of relative normal Euler numbers of the surfaces $\tilde{F}^r_r + \tilde{F}^b_b +\tilde{F}^g_g$. Each surface $\tilde{F}^i_i$, $i=r, g, b$ is the image of the fixed point surface of the diffeomorphism induced by the action on $W_F$ of the element $i$ in the Klein group. It is shown in \cite[Proposition 3.14]{GR} that $\tilde{\sigma}(\Gamma)$ can be calculated from a sum $\tilde{\Gamma} = \xi(\tilde{\Gamma}_r^r) + \xi(\tilde{\Gamma}_b^b) +  \xi(\tilde{\Gamma}_g^g)$ of signatures of the links $\tilde{\Gamma}_i^i$ in the rational homology spheres bounding the 4-manifolds $W/i$ (which we can also think of as double covers of the 4-ball branched over the surfaces $F_{jk}$). See \cite[Defn 3.10]{GR}, \cite[Thm 3.11]{GR}, \cite[Definition 3.12]{GR}.  

It is natural to ask whether the bounds of Theorem \ref{thm:main} extend to strong signature. In the following example, we use the Kinoshita-Wolcott family of theta-curves  \cite{Kinoshita-AlexanderI, Wolcott} to show that, unlike the weak signature,  strong signature does not yield a successful bound.

\begin{figure}[h]
    \centering
    \includegraphics[width=.4\textwidth]{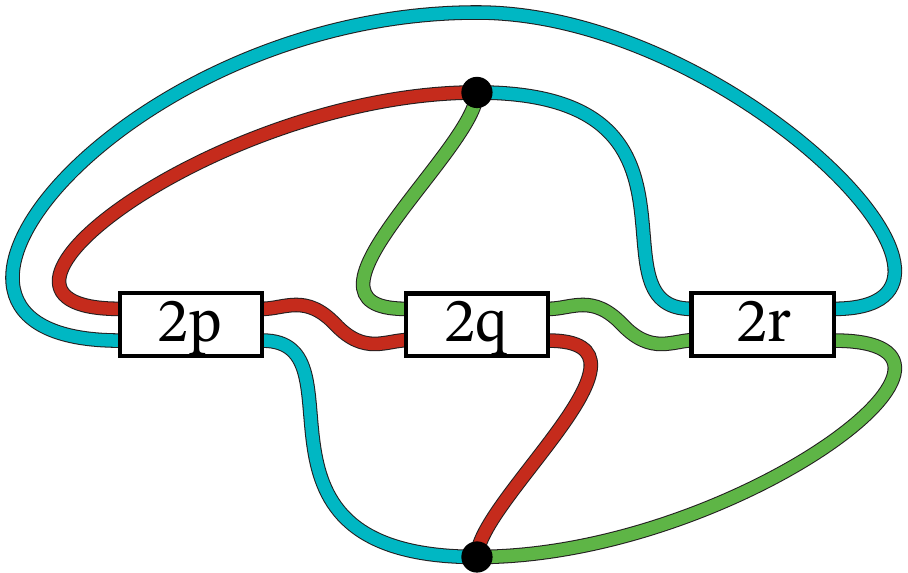}
    \caption{The Kinoshita-Wolcott $\thetan$-curve $\theta(p,q,r)$.  \cite{Kinoshita-AlexanderI, Wolcott}}
    \label{fig:kinoshita}
\end{figure}

Let $\theta(p,q,r)$ be the Kinoshita-Wolcott $\theta$-curve with $p$, $q$ and $r$ full twists shown in Figure~\ref{fig:kinoshita}. Notice that all constituent knots are unknotted, thus all edges lift to knots in the three-sphere, which we may use for the calculation of strong signature. Then:
\[\widetilde{\sigma}(\theta(p,q,r)) = \begin{cases}
  -8(p+q+r)  &\text{if $p$, $q$, and $r$ are odd} \\
  -8p &\text{if $p$ is odd and $q$ and $r$ are even}\\
  -8q &\text{if $q$ is odd and $p$ and $r$ are even}\\
  -8r &\text{if $r$ is odd and $p$ and $q$ are even}\\
  0 & \text{else}
\end{cases}
\]

So if we take $p=1$ and $q$ and $r$ positive odd integers, we get an infinite family of examples with $u(\theta(p,q,r))=1$, and $|\widetilde{\sigma}(\theta(p,q,r))|=8(1+q+r)>1$.

\section{Appendix}

Here we aggregate information about the behavior of linking number, nullity, component count, signature, and seam vertex count under reversal, mirroring and connected sum.

\begin{table}[h]
\begin{tabular}{lllll}
\toprule
 invariant &    mirror &  reverse  \\
 \midrule
 $\lambda$ &$\lambda(\mir(\vv{\Gamma}))=-\lambda(\vv{\Gamma})$ &$\lambda(-\vv{\Gamma})=\lambda(\vv{\Gamma})$  \\ 
 
 $\beta$ & $\beta(\mir(\vv{\Gamma}))=\beta(\vv{\Gamma})$ & $\beta(-\vv{\Gamma})=\beta(\vv{\Gamma})$ \\
 
 $\mu$  & $\mu(\mir(\vv{\Gamma}))=\mu(\vv{\Gamma})$& $\mu(-\vv{\Gamma})=\mu(\vv{\Gamma})$ \\

 $\sigma$ & $\sigma(\mir\vv{\Gamma}) = -\sigma(\vv{\Gamma})$ & $\sigma(-\vv{\Gamma}) = \sigma(\vv{\Gamma})$ \\
 
 $\sv$ & $\sv(\mir{\vv{\Gamma}}) = \sv(\vv{\Gamma})$ & $\sv(-\vv{\Gamma}) = -\sv(\vv{\Gamma})$ \\
\bottomrule\\
\end{tabular}
\caption{The behavior of linking number, nullity, component count, signature, and signed seam vertex count under reversal and mirroring, from Lemma \ref{lem:invts_of_rev_mir}, Definition \ref{defn:signedsvcount}, and Lemma \ref{lem:signatureoperations}.
}
\end{table}

\begin{table}[h]
\begin{tabular}{lll}
\toprule
 invariant  & $\#_2$ &  $\#_3$ \\
 \midrule
 $\beta$  & $\beta({\Gamma_1} \#_2 {\Gamma_2}) =  \beta({\Gamma_1}) + \beta({\Gamma_2})+1$ & 
 $\beta({\Gamma_1} \#_3 {\Gamma_2}) =  \beta({\Gamma_1}) + \beta({\Gamma_2}) $ \\
 
 $\mu$  & $\mu({\Gamma_1} \#_2 {\Gamma_2}) =  \mu({\Gamma_1}) + \mu({\Gamma_2})-2$ & $\mu({\Gamma_1} \#_3 {\Gamma_2}) =  \mu({\Gamma_1}) + \mu({\Gamma_2})-3$   \\

 $\sigma$  && $\sigma(\vv{\Gamma_1} \#_3 \vv{\Gamma_2}) = \sigma(\vv{\Gamma_1}) + \sigma(\vv{\Gamma_2})$ \\

 $\sv$ &$\sv(\vv{\Gamma_1} \#_2 \vv{\Gamma_2}) = \sv(\vv{\Gamma_1}) + \sv(\vv{\Gamma_2})$ & $\sv(\vv{\Gamma_1} \#_3 \vv{\Gamma_2}) = \sv(\vv{\Gamma_1}) + \sv(\vv{\Gamma_2})$ \\

\bottomrule\\
\end{tabular}
\caption{The behavior of nullity, component count, signature, and signed seam vertex count under connected sum from Lemmas \ref{lem:invts_of_sums}, Lemma \ref{lem:sv-additive}, and Lemma \ref{lem:signatureoperations}. Signature under order two connected sum is dependent on edge choice and is omitted.}
\end{table}

\section*{Acknowledgements}
 
This paper was initiated as part of the SQuaRE program at the American Institute of Mathematics (AIM). We thank AIM for their support and hospitality.
KLB was partially supported by the Simons Foundation gift \#962034. He also thanks the University of Pisa for their hospitality where he was a visitor while this work was completed.
AHM is partially supported by NSF Grant DMS--2204148 and The Thomas F. and Kate Miller Jeffress Memorial Trust, Bank
of America, Trustee. 
SAT is partially supported by NSF Grant DMS-2104022 and a Colby College Research Grant.

\bibliographystyle{alpha}
\bibliography{biblio}

\end{document}